\theoremstyle{plain}
\newtheorem{theorem}{Theorem}[section]
\newtheorem{definition}[theorem]{Definition}
\newtheorem{corollary}[theorem]{Corollary}
\newtheorem{proposition}[theorem]{Proposition}
\newtheorem{lemma}[theorem]{Lemma}
\newtheorem{remark}[theorem]{Remark}
\numberwithin{theorem}{section}
\numberwithin{equation}{section}
\newcommand{\average}{{\mathchoice {\kern1ex\vcenter{\hrule height.4pt
width 6pt depth0pt} \kern-9.7pt} {\kern1ex\vcenter{\hrule
height.4pt width 4.3pt depth0pt} \kern-7pt} {} {} }}
\def\R{\mathbb{R}}
\def\loc{\text{loc}}
\def\div{\text{div}}
\renewcommand{\a }{\alpha }
\renewcommand{\b }{\beta }
\renewcommand{\d}{\delta }
\newcommand{\D }{\Delta }
\newcommand{\e }{\varepsilon }
\newcommand{\g }{\gamma}
\newcommand{\G }{\Gamma}
\renewcommand{\l }{\lambda }
\newcommand{\n }{\nabla }
\newcommand{\vp }{\varphi }
\newcommand{\s }{\sigma }
\newcommand{\z }{\zeta}
\renewcommand{\th }{\theta }
\renewcommand{\o }{\omega }
\renewcommand{\O }{\Omega }
\newcommand{\ov}{\overline}
\newcommand{\be}{\begin{equation}}
\newcommand{\ee}{\end{equation}}
\newcommand{\de}{\partial}
\newcommand{\ti}{\widetilde}
\renewcommand{\k}{\kappa}
\newcommand{\calO }{\mathcal{O}}
\newcommand{\calL }{\mathcal{L}}
\newcommand{\calD }{\mathcal{D}}
\newcommand{\calQ }{\mathcal{Q}}
\newcommand{\calB }{\mathcal{B}}
\newcommand{\N}{\mathbb{N}}
\newcommand{\cL}{{\mathcal L}}
\newcommand{\cR}{{\mathcal R}}
\renewcommand{\epsilon}{\varepsilon}
\newcommand{\Ds}{ (-\D)^s}
\newcommand{\x}{ \xi}
\begin{document}
 

\author[Mouhamed Moustapha Fall]
{Mouhamed Moustapha Fall}
\address{M.M.F.: African Institute for Mathematical Sciences in Senegal, 
KM 2, Route de Joal, B.P. 14 18. Mbour, S\'en\'egal}
\email{mouhamed.m.fall@aims-senegal.org, mouhamed.m.fall@gmail.com}

\author[Xavier Ros-Oton]
{Xavier Ros-Oton}
 \address{X.R.: ICREA, Pg. Lluis Companys 23, 08010 Barcelona, Spain; and Universitat de Barcelona, Departament de Matem\`atiques i Inform\`atica, Gran Via de les Corts Catalanes 585, 08007 Barcelona, Spain.}
\email{xros@ub.edu}


\keywords{Regional fractional Laplacian, regularity, Schauder estimates, Censored processes. }
\subjclass[2010]{35B65, 35R11.}

\title[Global Schauder theory for minimizers of the $H^s(\Omega)$ energy]{Global Schauder theory for \\minimizers of the $H^s(\Omega)$ energy}

\begin{abstract}
We study the regularity of minimizers of the functional $\mathcal E(u):= [u]_{H^s(\Omega)}^2 +\int_\Omega fu$.
This corresponds to understanding solutions for the regional fractional Laplacian in $\Omega\subset\R^N$.
More precisely, we are interested on the global (up to the boundary) regularity of solutions, both in the  case of free minimizers in $H^s(\Omega)$ (i.e., Neumann problem), or in the case of Dirichlet condition $u\in H^s_0(\Omega)$ when $s>\frac12$.

Our main result establishes the sharp regularity of solutions in both cases: $u\in C^{2s+\alpha}(\overline\Omega)$ in the Neumann case, and $u/\delta^{2s-1}\in C^{1+\alpha}(\overline\Omega)$ in the Dirichlet case.
Here, $\delta$ is the distance to $\partial\Omega$, and $\alpha<\alpha_s$, with $\alpha_s\in (0,1-s)$ and $2s+\alpha_s>1$.
We also show the optimality of our result: these estimates fail for $\alpha>\alpha_s$, even when $f$ and $\partial\Omega$ are $C^\infty$.
\end{abstract}

\maketitle

\section{Introduction and main results}

Given $s\in(0,1)$ and a bounded domain $\Omega\subset\R^N$, we study the regularity of minimizers of energy functionals of the type
\begin{equation}\label{energy-functional}
\mathcal E(u):= [u]_{H^s(\Omega)}^2 +\int_\Omega fu\, dx,
\end{equation}
where $f\in L^2(\Omega)$, 
\[[u]_{H^s(\Omega)}^2=\frac{c_{N,s}}{4}\int_\Omega\int_\Omega \frac{\big|u(x)-u(y)\big|^2}{|x-y|^{N+2s}}\,dxdy,\]
and $c_{N,s}=\frac{s4^s\G\left( \frac{N}{2}+s\right)}{\pi^\frac{N}{2}\G(1-s)}$.

Notice that, as in the classical case $s=1$, one can study two types of minimizers:

\vspace{2mm}

\noindent $\bullet$\, Free minimizers $u\in H^s(\Omega)$ of \eqref{energy-functional} ---i.e., minimizing among all functions in $H^s(\Omega)$---, which corresponds to a \textbf{Neumann problem}.

\vspace{2mm}

\noindent $\bullet$\, Minimizers with prescribed zero boundary data ---i.e., minimizing among functions $u\in H^s_0(\Omega)$---, which corresponds to a \textbf{Dirichlet problem}.
Notice however that one needs $s>\frac12$ in order to have a trace operator\footnote{When $s\leq\frac12$ we have $H^s_0(\Omega)=H^s(\Omega)$ for any Lipschitz domain $\Omega\subset \R^N$, see e.g.  \cite[Theorem 1.4.2.4]{Grisv}.}, thus \emph{the Dirichlet problem  makes sense only for} $s>\frac12$.

\vspace{2mm}

The aim of this paper is to understand the global (up to the boundary) regularity of solutions in both cases:
\[\textit{If $f$ and $\Omega$ are regular enough, what is the regularity of minimizers $u$?}\]
This is the question that motivates our work.

Minimizers of \eqref{energy-functional} solve the equation $(-\Delta)^s_\Omega u=f$ in $\Omega$ in the weak sense, where 
\begin{equation}\label{regional}
(-\Delta)^s_\Omega u(x) = c_{N,s}{\rm p.v.}\int_\Omega \frac{u(x)-u(y)}{|x-y|^{N+2s}}\,dy
\end{equation}
is the so-called regional fractional Laplacian. Recall that this operator,  subjetcs to Dirichlet or Neumann boundary conditions are    generated by     L\`evy-type processes which are  censored to leave $\O$,  see e.g. \cite{BBC}.

Problems  involving the regional fractional Laplacian  have been studied both from the point of view of Probability \cite{BBC,CK,Guan,Guan-Ma,CKS} and of Analysis \cite{Mou,CGV,AFR,Fall-regional}.
The best known regularity results in this direction are those in \cite{Fall-regional}, where the first author established that for $\O$ sufficiently regular and   $f\in L^\infty(\O)$,   solutions $u$ are $C^{2s-\varepsilon}(\overline\Omega)$ in the Neumann case and that $u/\delta^{2s-1}\in C^{1-\varepsilon}(\overline\Omega)$ for the Dirichlet problem,  for every $\varepsilon>0$.

Our main results here establish for the first time Schauder-type estimates for such problem, which we show to be optimal.
They can be summarized as follows:

\begin{theorem}[Neumann problem]\label{thm-Neu}
Let $s\in(0,1)$, and let $\alpha_s\in(0,1-s)$ be given by Lemma~\ref{analysis-beta}, which satisfies $2s+\alpha_s>1$.
Let $\Omega\subset \R^N$ be any bounded $C^{\max(1,2s+\alpha)}$ domain, and $f\in C^\alpha(\overline\Omega)$ with $\int_{\O}f\,dx=0$.  Let    $\alpha<\alpha_s$ with $\alpha+2s\neq1$.

Let $u$ be the minimizer of \eqref{energy-functional} in $H^s(\Omega)$.
Then, $u\in C^{2s+\alpha}(\overline\Omega)$ and 
\[\|u\|_{C^{2s+\alpha}(\overline\Omega)} \leq C\|f\|_{C^\alpha(\overline\Omega)},\]
with $C$ depending only on $N$, $s$, $\alpha$, and $\Omega$.
In addition, if $\alpha$ is such that $2s+\alpha>1$ then 
\[\partial_\nu u=0\quad \textrm{on}\quad \partial\Omega.\]

Moreover, for $\alpha>\alpha_s$, solutions $u$ are in general {not} $C^{2s+\alpha}(\overline\Omega)$.
\end{theorem}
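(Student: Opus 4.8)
The plan is to treat the regularity estimate, the Neumann condition, and the optimality in turn; the existence and the interior part are routine, the boundary estimate is the crux, and the Neumann condition and the counterexample fall out of the tools built for it. Existence: since $\int_\Omega f=0$, the minimizer $u$ of \eqref{energy-functional} in $H^s(\Omega)$ exists and is unique up to an additive constant; I normalize $\int_\Omega u=0$, and use that $u$ solves $(-\Delta)^s_\Omega u=f$ in $\Omega$ in the weak sense, namely
\[\frac{c_{N,s}}{2}\int_\Omega\int_\Omega\frac{(u(x)-u(y))(\varphi(x)-\varphi(y))}{|x-y|^{N+2s}}\,dx\,dy=\int_\Omega f\varphi\,dx\qquad\text{for all }\varphi\in H^s(\Omega).\]
The interior estimate is standard fractional Schauder theory: on $B_{2r}(x_0)$ with $\overline{B_{2r}(x_0)}\subset\Omega$ one has $(-\Delta)^s_\Omega u=(-\Delta)^s u-c_{N,s}\,g\,u$, where $g(x):=\int_{\R^N\setminus\Omega}|x-y|^{-N-2s}\,dy\in C^\infty(B_r(x_0))$, so $u\in C^{2s+\alpha}_{\mathrm{loc}}(\Omega)$ with estimates scaling like $\dist(\cdot,\partial\Omega)^{-(2s+\alpha)}$. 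Everything therefore reduces to bounding $u$ in a fixed one-sided neighbourhood of each boundary point.

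The boundary estimate is the main obstacle. I would start from the known boundary regularity $u\in C^{2s-\varepsilon}(\overline\Omega)$ of \cite{Fall-regional} and bootstrap it up to the sharp exponent. Fixing $x_0\in\partial\Omega$ and flattening the boundary, the model operator is the regional fractional Laplacian of the half-space $\R^N_+$, for which Lemma~\ref{analysis-beta} provides the decisive one-dimensional input: the sign and zeros of $\beta\mapsto(-\Delta)^s_{(0,\infty)}(x_+^\beta)/x^{\beta-2s}$, and in particular that $2s+\alpha_s\ (>1)$ is the smallest homogeneity of a homogeneous solution of the homogeneous Neumann problem in $\R^N_+$ which is not an affine function tangent to $\{x_N=0\}$. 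Then I would (a) use the profiles $x_+^\beta$ with $\beta\uparrow 2s+\alpha_s$, multiplied by tangential cutoffs and corrected by explicit lower-order terms whose error is controlled by the constants in Lemma~\ref{analysis-beta}, as super- and sub-solutions, obtaining a first quantitative boundary bound $|u(x)-u(x_0)-\ell_{x_0}(x)|\le C|x-x_0|^{2s+\alpha}$ with $\ell_{x_0}$ the tangential affine part of $u$ at $x_0$; and (b) upgrade this to $u\in C^{2s+\alpha}(\overline\Omega)$ by a rescaling/compactness (Liouville) argument --- subtracting the boundary Taylor data and rescaling $u$ around boundary points, any blow-up limit solves the homogeneous Neumann problem in $\R^N_+$ with growth strictly below $2s+\alpha<2s+\alpha_s$, hence is affine and tangential by the Liouville statement underlying Lemma~\ref{analysis-beta}, which forces the geometric decay of the excess that closes the iteration and yields $\|u\|_{C^{2s+\alpha}(\overline\Omega)}\le C\|f\|_{C^\alpha(\overline\Omega)}$ after combining with the interior estimate. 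The genuinely delicate points are making the barrier/solution construction robust for the nonlocal operator on a domain that is only $C^{\max(1,2s+\alpha)}$, proving the Liouville classification of subcritically growing global Neumann solutions, and separating the two regimes across $2s+\alpha=1$ --- whence the hypothesis $\alpha+2s\neq1$ and the conditional form of the Neumann statement.

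For the Neumann condition, assume $2s+\alpha>1$; then $u\in C^{2s+\alpha}(\overline\Omega)\subset C^1(\overline\Omega)$ and $\partial_\nu u$ is classically defined on $\partial\Omega$. If $\partial_\nu u(x_0)\neq0$ at some $x_0$, the first-order boundary expansion of $u$ at $x_0$ contains the term $-\partial_\nu u(x_0)\,\delta$, and the one-dimensional computation $(-\Delta)^s_\Omega(\text{const}+c\,\delta)(x)\sim c\,C_s\,\delta(x)^{1-2s}$ from Lemma~\ref{analysis-beta} (a $\log\delta$ term if $s=\tfrac12$) shows that $f=(-\Delta)^s_\Omega u$ would then carry a $\delta^{1-2s}$ singularity at $x_0$; since $2s+\alpha>1$ means $\alpha>1-2s$, this contradicts $f\in C^\alpha(\overline\Omega)$. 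Hence $\partial_\nu u=0$ on $\partial\Omega$ --- this is precisely the condition forced on the free minimizer by the absence of a $\delta^{1-2s}$ profile in its bounded right-hand side.

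For the optimality, fix $\alpha\in(\alpha_s,1-s)$; I would exhibit a $C^\infty$ domain and $f\in C^\infty(\overline\Omega)$ with $\int_\Omega f=0$ whose minimizer is not $C^{2s+\alpha}(\overline\Omega)$. Let $P$ be the homogeneous solution of degree $2s+\alpha_s$ of the homogeneous Neumann problem in $\R^N_+$ given by Lemma~\ref{analysis-beta}, so that $P\sim(x_N)_+^{2s+\alpha_s}$ near $\{x_N=0\}$ is exactly of class $C^{2s+\alpha_s}$ and not $C^{2s+\alpha}$ for $\alpha>\alpha_s$. Take a bounded $C^\infty$ domain $\Omega$ agreeing with $\R^N_+$ near a boundary point $x_0$, a cutoff $\eta\in C^\infty_c$ with $\eta\equiv1$ near $x_0$, and set $v_0:=\eta P$ plus explicit lower-order corrections chosen --- using the solution property of $P$ and the homogeneity bookkeeping from the barrier step --- so that $f:=(-\Delta)^s_\Omega v_0\in C^\infty(\overline\Omega)$; note $\int_\Omega f\,dx=0$ automatically, by antisymmetry of the kernel. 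Since $v_0\in H^s(\Omega)$ then satisfies the weak formulation with right-hand side $f$, and this problem has a unique solution modulo constants, $v_0$ coincides up to an additive constant with the minimizer for $f$; hence that minimizer inherits the exact $(x_N)_+^{2s+\alpha_s}$ boundary behaviour at $x_0$ and fails to be $C^{2s+\alpha}(\overline\Omega)$. This establishes the failure of the estimate for every $\alpha>\alpha_s$, even with $\Omega$ and $f$ smooth.
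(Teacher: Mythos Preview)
Your core strategy for the boundary estimate --- the compactness/blow-up argument (your step (b)) --- is exactly the paper's method: one flattens the boundary, subtracts a tangential affine function at each boundary point, and shows by contradiction that the rescaled remainders converge to a global solution in $\R^N_+$ with subcritical growth, which the Liouville theorem (Theorem~\ref{th:Liouville-Neum}, built on Lemma~\ref{analysis-beta}) forces to be tangential-affine. The paper does \emph{not} use your step (a) at all: no barriers or comparison with profiles $x_+^\beta$ appear. Instead, the a priori input is only the $C^{2s-\varepsilon}$ estimate of \cite{Fall-regional}, and the needed control on the tangential gradient (your $\ell_{x_0}$) is bootstrapped by iterating the compactness proposition itself with a decreasing exponent loss; see Proposition~\ref{prop-blow-up} and Corollary~\ref{cor-blow-up}. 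A barrier approach would be delicate here because the Neumann problem for $(-\Delta)^s_\Omega$ lacks a clean comparison principle compatible with free test functions.

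Two points where your proposal departs from the paper and where you should be careful. First, the Neumann condition: the paper obtains $\partial_\nu u=0$ directly from the boundary expansion in Corollary~\ref{cor-blow-up} (the optimal affine approximant $e$ lives in $\R^{N-1}$, so the normal derivative vanishes). Your argument --- that a nonzero $\partial_\nu u$ would force a $\delta^{1-2s}$ singularity in $f$ --- is plausible, but to make it rigorous you must show that $(-\Delta)^s_\Omega$ applied to the $C^{2s+\alpha}$ remainder after subtracting the first-order Taylor polynomial is bounded near $x_0$; the mapping property you would invoke (Lemma~\ref{lem:mapping-prop}) already \emph{assumes} $\partial_\nu=0$ on all of $\partial\Omega$, so there is a circularity risk. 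Second, your optimality construction has a genuine gap: you assert that ``explicit lower-order corrections'' to $\eta P$ can be chosen so that $f=(-\Delta)^s_\Omega v_0\in C^\infty(\overline\Omega)$, but this is far from clear --- the cutoff and the curvature of $\partial\Omega$ produce error terms of many different (non-integer) homogeneities, and there is no finite correction procedure that kills them all. The paper's argument is simpler and stays in dimension one: by the expansion \eqref{1D-expansion} (proved via Lemma~\ref{eq:lem-eigen-dec}), any 1D Neumann solution on $(0,1)$ genuinely contains an $x^{\beta_1}=x^{2s+\alpha_s}$ term, which already exhibits the sharp loss of regularity.
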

The critical exponent $\alpha_s>0$ is given by an explicit equation in terms of Gamma functions; see Remark \ref{rem1D} below.
For example, in the simplest case $s=\frac12$, it is the first positive solution of $1+\alpha = \frac{1}{\pi}\tan(\pi \alpha)$.   

Next, we notice that Theorem \ref{thm-Neu} implies that the minimizer $u$ satisfies    $\Ds_\O u\in C^\a(\ov \O)$ and thus it     is in fact a classical solution: $\Ds_{\O}u(x)=f(x)$ for all $x\in \ov\O$.     This will follow from the results in Section \ref{sec4} below.

\begin{remark}
It is natural to wonder whether,  for $s\in (0,\frac{1}{2}]$,  solutions to the Neumann problem are $C^1$ up to the boundary or not and have zero normal derivative   when $f$ and $\Omega$ are regular enough.

Our result answers this question for the first time:  solutions are always $C^1(\overline\Omega)$, for every $s\in (0,1)$.
Furthermore, since $\alpha_s<1-s$, then solutions are in general not $C^2(\overline\Omega)$.

We think it is quite surprising that the answer is the same for all $s\in (0,1)$.

\end{remark}

In the Dirichlet case our result reads as follows:

\begin{theorem}[Dirichlet problem]\label{thm-Dir}
Given $s\in(\frac12,1)$, let $\alpha_s\in(0,1-s)$ be given by Lemma~\ref{analysis-beta}.
Let $\Omega\subset \R^N$ be a $C^{2,\b}$ domain and $f\in C^\alpha(\overline\Omega)$, with $\alpha<\min\{\alpha_s,2s-1,\b\}$ and $\alpha+2s\neq1$.

Let $u$ be the minimizer of \eqref{energy-functional} in $H^s_0(\Omega)$.
Then, $u/\delta^{2s-1}\in C^{1+\alpha}(\overline\Omega)$ and 
\[\|u/\delta^{2s-1}\|_{C^{1+\alpha}(\overline\Omega)} \leq C\|f\|_{C^\alpha(\overline\Omega)},\]
with $C$ depending only on $N$, $s$, $\alpha$, and $\Omega$. Here, $\d\in C^{2,\b}(\ov\O)$ is a positive function in $\O$ that coincides with dist$(\cdot,\de\O)$ near $\de\O$.

In addition, if we denote $\psi:={u}/{\delta^{2s-1}} $, we have
\begin{equation}\label{eq:unn-mc-int}
\partial_\nu\psi=-(N-1)  H_{\partial\Omega} \,\psi\quad \textrm{on}\quad \partial\Omega,
\end{equation}
where $H_{\partial\Omega}$ is the mean curvature of $\partial\Omega$ and $\nu$ is the exterior normal of $\de\O$.

Moreover, for $\alpha>\alpha_s$ the quotient $u/\delta^{ 2s-1}$ does not belong in general to $C^{1+\alpha}(\overline\Omega)$.
\end{theorem}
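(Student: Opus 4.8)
The plan is to reduce the Dirichlet problem to a boundary regularity statement for the function $\psi := u/\delta^{2s-1}$ and then run a blow-up/Liouville scheme analogous to the one used in the Neumann case (Theorem~\ref{thm-Neu}), with the extra ingredient being the correct ansatz for the behavior of solutions near $\partial\Omega$. First I would establish interior estimates: away from $\partial\Omega$ the regional fractional Laplacian behaves like the restricted fractional Laplacian up to a lower-order error (the ``tail'' integral over $\R^N\setminus\Omega$, which is smooth in the interior), so $u\in C^{2s+\alpha}_{\rm loc}(\Omega)$ and hence $\psi\in C^\infty_{\rm loc}(\Omega)$ follows from standard Schauder theory for $(-\Delta)^s$. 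The content of the theorem is therefore entirely in the boundary behavior, and the bulk of the proof is a fine analysis near a boundary point, which after flattening $\partial\Omega$ reduces to the half-space model.

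The key model computation is the one-dimensional (or half-space) analysis: on $\R^N_+ = \{x_N>0\}$ one checks that $\delta^{2s-1}$ (i.e. $x_N^{2s-1}$) is, up to a constant, the ``(2s-1)-homogeneous $s$-harmonic'' solution of the regional problem with vanishing right-hand side, and more generally one expands $u$ near the boundary as $\psi(x')\, \delta(x)^{2s-1}$ plus correctors. This is exactly where Lemma~\ref{analysis-beta} enters: the admissible homogeneities of solutions of the half-space regional equation that are $o(\delta^{2s-1})$ at the origin but still relevant for $C^{1+\alpha}$ expansions are governed by the roots of the Gamma-function equation, and $\alpha_s$ is precisely the threshold below which the next homogeneity exceeds $2s-1+1 = 2s$, allowing a genuine $C^{1+\alpha}$ expansion of $\psi$; for $\alpha>\alpha_s$ an explicit solution built from the forbidden homogeneity shows $\psi\notin C^{1+\alpha}$, giving the optimality claim. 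I would carry this out by: (i) constructing explicit barriers $\delta^{2s-1}$, $\delta^{2s-1}(a + b\cdot x')$, and $\delta^{2s}$-type correctors to get the preliminary bound $\psi\in C^{\gamma}(\overline\Omega)$ for small $\gamma$ (this is the improvement of \cite{Fall-regional}); (ii) setting up a blow-up sequence $\psi_k(x) = \big(\psi(x_0 + r_k x) - \ell_k(x)\big)/r_k^{1+\alpha}$ where $\ell_k$ is the best affine approximation at a boundary point $x_0$, normalized so $\|\psi_k\|_{L^\infty}$ on unit balls is $1$; (iii) proving compactness of $\{\psi_k\}$ using the interior estimates plus uniform boundary Hölder bounds, and identifying the limit as an entire solution in $\R^N_+$ of the linear half-space regional equation with a polynomial right-hand side of degree $\le 1+\alpha$; (iv) invoking a Liouville-type theorem — the only such solutions growing slower than $|x|^{1+\alpha}$ at infinity are the affine ones $\delta^{2s-1}(a+b\cdot x')$ — to contradict the normalization. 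The boundary condition \eqref{eq:unn-mc-int} then comes out of matching the first-order term in the expansion of $\psi$ against the curvature term produced when one straightens $\partial\Omega$ (the Jacobian of the flattening and the expansion of $\delta$ in Fermi coordinates contribute $-(N-1)H_{\partial\Omega}$).

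The main obstacle I anticipate is step (iv), the Liouville theorem for the half-space regional operator acting on functions of the form $\delta^{2s-1}\psi$. Unlike the restricted fractional Laplacian, the regional operator on $\R^N_+$ does not have a clean conformal/extension structure, so one must analyze the ODE (in $x_N$, after Fourier transform in $x'$) satisfied by $\psi$, or equivalently the generator of the censored process, and classify its solutions with prescribed polynomial growth — this is exactly the computation underlying Lemma~\ref{analysis-beta}, and the delicate point is ruling out the ``resonant'' homogeneity $2s-1+\alpha_s$ precisely when $\alpha = \alpha_s$ (hence the hypothesis $\alpha<\alpha_s$, and $\alpha+2s\neq 1$ to avoid the logarithmic resonance at homogeneity $1$). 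A secondary technical difficulty is that $\psi$ itself is only Hölder a priori, so the blow-ups must be taken for the quotient $u/\delta^{2s-1}$ rather than for $u$, which requires careful control of $\delta^{2s-1}$ and its derivatives near $\partial\Omega$ under the flattening — this is where the $C^{2,\beta}$ regularity of $\Omega$ and the restriction $\alpha<\min\{2s-1,\beta\}$ are used. For the optimality part (last sentence of the theorem), no blow-up is needed: I would exhibit, on a half-space or a ball, an explicit $f\in C^\infty$ for which the solution's $\psi$ contains a term proportional to $\delta^{2s-1}\,|x'|^{1+\alpha_s}$ (built from the critical root in Lemma~\ref{analysis-beta}), which is not $C^{1+\alpha}$ for $\alpha>\alpha_s$.
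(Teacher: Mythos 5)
Your overall architecture (interior estimates, flattening, blow-up at the boundary with a Liouville classification, optimality from the 1D homogeneity $\beta_1=2s+\alpha_s$) does match the skeleton of the paper's proof, and the Liouville theorem you worry about is not the real obstacle: the paper obtains it by differentiating tangentially to reduce to dimension one and then using the eigenfunction expansion of Section 2. The genuine gap is that you treat $\delta^{2s-1}$ as an exact model solution and propose to blow up $\psi=u/\delta^{2s-1}$ against its best \emph{affine} approximation. In dimension $N\geq 2$ this fails as stated, because $(-\Delta)^s_\Omega\,\delta^{2s-1}$ is \emph{not} bounded even for $C^\infty$ domains: near $\partial\Omega$ it behaves like $g\,\log\delta$ with $g|_{\partial\Omega}=(N-1)H_{\partial\Omega}$, and likewise $(-\Delta)^s_{\R_+}(x_N^{2s}\bar\eta)$ carries an $a_s\log x_N$ singularity (this is computed in Lemmas \ref{lem:comput-L-prof} and \ref{lem:Dir-Op-bdr}). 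Consequently, if one subtracts profiles $\delta^{2s-1}(b+a\cdot x')$ (or affine approximations with a free coefficient in $x_N$), the right-hand side of the rescaled equations contains a logarithmic term whose coefficient does not vanish at the base point; it scales like $r^{2s}|\log r|$, which cannot be absorbed at the rate $r^{2s+\alpha}$ required by the $C^{1+\alpha}$ iteration. The paper's key device, which your proposal does not contain, is to work with the curvature-corrected class $P=\big(b+a\cdot x'\big)\delta^{2s-1}-b\,T_3(0)\,\delta^{2s}$ (up to cutoffs), where $T_3(0)=-\Delta\phi(0)=-(N-1)H_{\partial\Omega}$ and the coefficient of $\delta^{2s}$ is \emph{slaved} to $b$ precisely so that the two logarithms cancel at the blow-up point ($g_1(0)=0$ in Lemma \ref{lem:Dir-Op-bdr}); Proposition \ref{prop-blow-u-Dir} then runs the compactness argument with the approximations $a_{r,w}\cdot x'+b_{r,w}(1-T_3(0)x_N)$. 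This slaving is also exactly what produces the boundary identity \eqref{eq:unn-mc-int}, via $\partial_{x_N}\psi(0)=-\psi(0)T_3(0)$; your appeal to ``matching the Jacobian of the flattening and the Fermi expansion of $\delta$'' does not substitute for this computation. Mentioning ``$\delta^{2s}$-type correctors'' only as barriers in a preliminary H\"older step does not fix the blow-up scheme, where the corrector with the curvature-determined coefficient must be part of the approximating class.

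Two smaller points. First, your proposed counterexample for $\alpha>\alpha_s$ is in the wrong variable: the critical term produced by Lemma \ref{analysis-beta} is the \emph{normal} homogeneity $x_N^{\beta_1}=x_N^{2s+\alpha_s}$, so generically $\psi$ contains a term $\asymp\delta^{1+\alpha_s}$, which is what destroys $C^{1+\alpha}$ regularity for $\alpha>\alpha_s$; a tangentially rough term $\delta^{2s-1}|x'|^{1+\alpha_s}$ is not a solution of the half-space model and does not arise from the 1D analysis. Second, the hypothesis $\alpha+2s\neq1$ is automatic when $s>\frac12$, so it plays no ``logarithmic resonance'' role in the Dirichlet case; the restriction $\alpha<\min\{2s-1,\beta\}$ enters instead through the regularity of the flattening maps and of the coefficients $T_j$, $g_i$ in the corrector computation.
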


We point out that \eqref{eq:unn-mc-int} does not hold in the local case $s=1$.
Indeed, in that case we have $\psi=-\partial_\nu u$ and hence $\partial_\nu \psi =- (N-1)H_{\partial\Omega} \,\psi+ f$ on $\partial\Omega$, where we  used $\D u=\de^2_\nu u+(N-1)H_{\de\O}\de_\nu u$ on~$\de\O$.\\

{A difficulty that arises in the proof of Theorem \ref{thm-Dir} in dimension $N\geq 2$ is that $\Ds_\O\delta^{2s-1} $ does not  in general  belong to   $L^\infty(\O)$ even if $\O$ is $C^\infty$.  In fact near $\de\O$, we have that $\Ds_\O\delta^{2s-1}(x)  \asymp  g(x) \log (\d)$, with $g|_{\de\O}=(N-1)H_{\de\O}$}.  This leads to the assumption  $\O$  to be of class $C^{2,\b}$ with $\b>\a$.
To overcome this difficulty  we consider  the  correction $ \delta^{2s-1}+b \d^{2s}$, for some appropriately  chosen function $b$ defined on $\de\O$.  This yields also the identity \eqref{eq:unn-mc-int}.

\begin{remark}[The 1D case]\label{rem1D}
Let us briefly explain what happens in dimension $1$.

Let us assume for example that $\Omega=\R_+$.
Then, we will show that any solution to $(-\Delta)^s_{\Omega} u=0$ in $(0,1)\subset\R$ has an expansion of the form
\begin{equation}\label{1D-expansion}
\qquad \qquad u(x) = c_0 +a_0 x^{2s-1} + a_1 x^{\beta_1}+...+a_k x^{\beta_k}+...\quad \textrm{for}\quad x\approx 0,
\end{equation}
where all $\beta_k$'s can be characterized as the positive solutions of
\[\frac{\Gamma(\beta+1)}{\Gamma(\beta-2s+1)\Gamma(2s)} 
= \frac{1}{\pi}\frac{\sin\big(\pi(\beta-2s)\big)\sin(\pi s)}{\sin\big(\pi(\beta-s)\big)}.\]
By an appropriate analysis of such equation\footnote{Notice that when $s=\frac12$ this equation becomes $\pi \beta=\tan(\pi\beta)$. 
In this case, it is easy to visualize and prove all properties of the exponents $\beta_k$.}, we will show that 
\[\beta_{k+1}>1+\beta_k\quad \textrm{for all}\quad k\geq1,\]
and $\beta_1=2s+\alpha_s$.
In addition, we will see that $\alpha_s\in(0,1-s)$ and $2s+\alpha_s>1$.

In case of zero Neumann boundary conditions on $\partial \Omega$, we must have $a_0=0$ in \eqref{1D-expansion}, while in case of zero Dirichlet conditions we must clearly have $c_0=0$.
This already shows that the $C^{2s+\alpha}$ regularity result from Theorem \ref{thm-Neu} cannot hold for $\alpha>\alpha_s$, as stated above.
\end{remark}

In higher dimensions several difficulties arise, and we will have an expansion of this type with some new terms, both coming from the tangential extra variables and from the curvature of the domain.

We expect the assumptions on $\Omega$ in both Theorems \ref{thm-Neu} and \ref{thm-Dir} to be optimal.

\subsection{Preliminaries and definitions}

Let us next give some important definitions that will be used throughout the paper.

First, recall that given any open set $\Omega\subset\R^N$, the regional fractional Laplacian in $\Omega$ is defined by \eqref{regional}, and its associated bilinear form is given by
\[
 D_\O(u,v)=\frac{c_{N,s}}{2}\int_{\O}\int_{\O}\frac{(u(x)-u(y))(v(x)-v(y))}{|x-y|^{N+2s}}\, dxdy
\]
The natural notion of weak solution is the following.

\begin{definition}
 We say that $u$ is a (weak) solution of 
\be \label{eq:init}
 \Ds_{{\O}}u=f  \quad\textrm{ in\quad$\O\cap B_1 $}  
\ee
with zero \emph{Neumann boundary condition} on $\de\O\cap  B_1$ if $u\in H^s(\O)$ and   
$$
 D_\O(u,\vp)=\int_{\O}f\vp\, dx \qquad \textrm{$\forall\vp \in C^\infty_c(B_1)$.}
$$

On the other hand, when $s>\frac12$, we say that $u$ is a (weak) solution of \eqref{eq:init} with zero \emph{Dirichlet boundary condition} on $\de\O\cap B_1$ if $u\in H^s_0(\O)$  and   
$$
 D_\O(u,\vp)=\int_{\O}f\vp\, dx \qquad \textrm{$\forall\vp \in C^\infty_c(B_1\cap\O )$.}
$$
By approximation, we can of course take any $\varphi\in H^s_0(\Omega\cap B_1)$.
\end{definition}

We notice that being a weak solution is equivalent to being a minimizer of the functional~\eqref{energy-functional}, for both the Neumann and the Dirichlet case.

\subsection{Acknowledgements} 

The first author's work is supported by the Alexander von Humboldt foundation. 
The second author was supported by the European Research Council (ERC) under Grant Agreement No 801867.

\subsection{Organization of the paper}

The paper is organized as follows.
In Section \ref{sec2} we completely characterize all 1D homogeneous solutions, and prove \eqref{1D-expansion}.
In Section \ref{sec3} we use this to prove Liouville theorems in a half-space.
Then, in Section \ref{sec4} we provide some new estimates for the regional fractional Laplacian in domains.
Finally, in Sections \ref{sec5} and \ref{sec6} we prove Theorems \ref{thm-Neu} and \ref{thm-Dir}, respectively.

\section{1D homogeneities}
\label{sec2}

The aim of this section is to classify all possible homogeneities in dimension 1 and, as a consequence, to prove \eqref{1D-expansion}.

Roughly speaking, we want to classify all solutions $u\in H^s_\loc([0,\infty))$ of 
\be \label{eq:first-eq}
\Ds_{\R_+}u=0\quad\textrm{in}\quad \R_+.
\ee
To do so, we will define 
\[v(x):=\left\{\begin{array}{ll}
u(x)-u(0) & \quad\textrm{for}\quad x\geq0 \\
0 & \quad\textrm{for}\quad x\leq0,
\end{array}\right.\] 
and notice that we have\footnote{Here, $(-\Delta)^s$ denotes the fractional Laplacian in $\R$, i.e., \eqref{regional} with $\Omega=\R$.} $\Ds v(x)=\Ds_{\R_+}u(x)+u(x)c_{1,s}\int_{\R_-}|x-y|^{-1-2s}\,dy$.
Then by a direct computation, we find 
\be\label{eq:initi}
\begin{cases}
\Ds v -\displaystyle \frac{a_s}{x^{2s}}\, v=0& \qquad\textrm{ in $\R_+$}\\
v=0&  \qquad\textrm{ in $\R_-$,}
\end{cases}
\ee
where $a_s:=\frac{c_{1,s}}{2s}$.
This allows us to use the Caffarelli-Silvestre extension (see e.g.  \cite{CSilv}).   

Now note that, since our aim is to obtain $C^{2s+\a}$ regularity, we will need to consider solutions to \eqref{eq:initi} that grow like  $|x|^{2s+\a}$ at infinity. 
Thus, the equations in \eqref{eq:initi}  needs to be understood in a generalized sense.\\
 In the following of this paper we   let $\chi_R\in C^\infty_c(B_{2R})$ be   such that $ \chi_R\equiv 1$ in $B_{R}$.
\begin{definition}\label{def-sols-growth-AR}
As in\footnote{This kind of definition was originally introduced in \cite{DSV}. However, the definition in \cite{DSV} requires only pointwise convergence of $f_R$, while here (as in \cite{AR19}) we take uniform convergence.} \cite[Section 3]{AR19}, given a function with polynomial growth 
\[\int_{\R^N}\frac{|u(x)|}{1+|x|^{N+2s+k}}\,dx<\infty,\]
we say that 
\[(-\Delta)^s u\ {\stackrel{\mathclap{k}}{=}}\ f\quad \textrm{in}\quad \Omega\subset \R^N\]
if there exists a family  of polynomials $p_R$ of degree at most $k-1$ and a family of functions $f_R$ such that $(-\Delta)^s (u\chi_{R})=f_R+p_R$ in $\O\cap B_{R/2}$, with $f_R\to f$ uniformly in $\Omega$ as $R\to\infty$.

Thanks to \cite[Lemma 3.3]{AR19}, this is equivalent to saying that there exists an extension $\tilde u(x,y)$ with polynomial growth in $\R^{N+1}_+$, satisfying $\tilde u(x,0)=u(x)$ and ${\rm div}(t^{1-2s}\nabla \tilde u)=0$ in $\{y>0\}$, such that 
\[-\lim_{t\to0} t^{1-2s} \partial_t \tilde u = \overline\kappa_s f \quad \textrm{on}\quad \Omega\cap \{y=0\}.\]

It is easy to check that, if $\int_{\R^N}|D^k u(x)|/(1+|x|^{N+2s})dx<\infty$, and $(-\Delta)^s D^k u=D^k f$ in $\Omega$ for some function $f$, then $(-\Delta)^s u\ {\stackrel{\mathclap{k}}{=}}\ f$ in $\Omega$.
\end{definition}
In the extended variables $(x,t)\in\R\times\R_+$ ---and denoting $v(x,t)$ the extension of $v(x)$---, \eqref{eq:initi} reads as
\be\label{eq:initi2}
\begin{cases}
{\rm div}(t^{1-2s}\nabla v)=0& \qquad\textrm{ in $\{t>0\}$}\\
\displaystyle
-\lim_{t\to 0} t^{1-2s} \partial_t v= \ov \k_s 
\frac{a_s}{x^{2s}}v & \qquad\textrm{ on $\{t=0,\ x>0\}$}\\
v=0&  \qquad\textrm{ on $\{t=0,\ x\leq0\}$,}
\end{cases}
\ee
where  $\ov\k_s=\frac{\Gamma(1-s)}{2^{2s-1}\Gamma(s)}$.

If we look for homogeneous solutions 
\begin{equation}\label{look-homogeneous}
\qquad\qquad v(r\cos\th,r\sin\th)= r^\beta \psi(\theta),
\end{equation}
with  $r>0$ and $\theta\in[0,\pi]$,
we are led to the the following eigenvalue problem for the function $\psi$ in the space $H^{1}\big((0,\pi);  \sin(\th) ^{1-2s}d\theta \big)$:
\begin{align}\label{eq:11}
\begin{cases}
-  \big(  \sin(\th) ^{1-2s} \psi'  \big)' =\l \sin(\th) ^{1-2s} \psi& \qquad\textrm{ for $\theta\in(0,\pi)$}\\
 -\lim_{\th\to 0}\sin(\th) ^{1-2s} \psi'(\th)= \ov \k_s a_{s}   \psi (0)&   \\
  \psi(\pi) = 0,&   
\end{cases}
\end{align}
where 
\[\l = \beta(\beta-2s+1).\]
Problem \eqref{eq:11} possesses a sequence of increasing eigenvalues 
 $$
 \l_0(s)< \l_1(s)\leq,\dots,
 $$
 with corresponding   eigenfunctions $\psi_k\in L^2\big((0,\pi);  \sin(\th)^{1-2s}\big)$,  normalized as
$$
\int_{ 0}^{\pi} \sin(\th)^{1-2s} \psi^2_k(\th)\, d\th=1.
$$
Moreover,  see e.g. \cite{Fall-regional}, \footnote{This follows from the Poincar\'e type inequality: $\l_0(s)=\inf_{\stackrel{\psi\in C^1([0,\pi])}{\psi(\pi)=0}}\frac{\int_0^\pi \sin(\th)^{1-2s}(\psi'(\th))^2\, d\th-\ov \k_s a_{s}  \psi(0)^2}{\int_{ 0}^{\pi} \sin(\th)^{1-2s} \psi^2_k(\th)\, d\th }=0$.}   we have  $\l_0(s)=0$.
Using this, we can prove the following.

\begin{lemma}\label{eq:lem-eigen-dec}
Let $\lambda_k$ and $\psi_k$ as above, and let $\b_k=\frac{2s-1}{2}+\sqrt{\l_k(s)+(\frac{2s-1}{2})^2}$. 
Then, the functions 
\[v_k(r,\theta)= r^{\beta_k} \psi_k(\theta)\]
are homogeneous solutions of \eqref{eq:initi2}.

Moreover, if $w\in H^1\big(B_\rho^+;t^{1-2s} dxdt\big)\cap C(\ov{B_\rho^+})$ is any weak solution of
 \begin{align}\label{eq:eq-w-g}
\begin{cases}
{\rm div}(t^{1-2s} \n w)=0  & \qquad\textrm{ in $B_\rho^+$}\\
-\lim_{t\to 0}t^{1-2s} \de_t w=\ov \k_s  a_{s}  x^{-2s} w & \qquad\textrm{ on $\{t=0,\ 0<x<\rho\}$}\\
w=0 &\qquad\textrm{ on $\{t=0,\ -\rho<x\leq0\}$},
\end{cases}
\end{align}
then 
\be
 w(r\cos\th, r\sin\th) =\sum_{k=0}^\infty \o_k r^{\b_k}\psi_k(\th),
\ee
with  $\o_k\in \R$.
\end{lemma}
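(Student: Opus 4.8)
## Proof proposal

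The plan is to separate variables in the Caffarelli–Silvestre extension and expand the boundary trace of $w$ in the eigenbasis $\{\psi_k\}$ of the singular Sturm–Liouville problem \eqref{eq:11}. First I would verify the claim that each $v_k(r,\theta)=r^{\beta_k}\psi_k(\theta)$ solves \eqref{eq:initi2}: writing the operator $\mathrm{div}(t^{1-2s}\nabla\cdot)$ in polar coordinates $(r,\theta)$, one gets
\[
\mathrm{div}(t^{1-2s}\nabla v)=r^{\beta-2}\sin(\theta)^{1-2s}\Big(\beta(\beta+2s-1)\psi+\big(\sin(\theta)^{1-2s}\psi'\big)'/\sin(\theta)^{1-2s}\Big),
\]
so that $v=r^\beta\psi(\theta)$ is extension-harmonic precisely when $-(\sin(\theta)^{1-2s}\psi')'=\lambda\sin(\theta)^{1-2s}\psi$ with $\lambda=\beta(\beta-2s+1)$. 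Using that $t^{1-2s}\partial_t$ at $\theta=0$ corresponds (after the homogeneity is accounted for) to $-\sin(\theta)^{1-2s}\psi'$ near $\theta=0$ and that the power of $r$ matches the weight $x^{-2s}$ in the Robin condition, the boundary conditions in \eqref{eq:11} are exactly the ones encoded in \eqref{eq:initi2}; the condition $\psi(\pi)=0$ gives the Dirichlet condition on $\{x\le0\}$. The relation $\beta_k=\frac{2s-1}{2}+\sqrt{\lambda_k(s)+(\frac{2s-1}{2})^2}$ is just the positive root of $\beta(\beta-2s+1)=\lambda_k$.

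For the expansion of a general solution $w$, I would argue as follows. Since the $\psi_k$ form an orthonormal basis of $L^2\big((0,\pi);\sin(\theta)^{1-2s}d\theta\big)$ (spectral theory for the compact resolvent of the weighted Sturm–Liouville operator with the Robin/Dirichlet boundary pair), for each fixed $r\in(0,\rho)$ we may write $w(r\cos\theta,r\sin\theta)=\sum_{k\ge0}\omega_k(r)\psi_k(\theta)$ with $\omega_k(r)=\int_0^\pi \sin(\theta)^{1-2s}w(r\cos\theta,r\sin\theta)\psi_k(\theta)\,d\theta$. Testing the weak formulation \eqref{eq:eq-w-g} against $\psi_k(\theta)\eta(r)$ for arbitrary $\eta\in C_c^\infty((0,\rho))$, and using the boundary conditions to cancel the boundary terms at $\theta=0,\pi$, one finds that $\omega_k$ solves the Euler ODE
\[
-(r\,\omega_k')'\,r^{1-2s}\text{-weighted}\ \ :\quad (r^{1-2s}\omega_k')'=\lambda_k r^{-1-2s}\omega_k\quad\text{on }(0,\rho),
\]
i.e. $r^2\omega_k''+(2-2s)r\,\omega_k'-\lambda_k\omega_k=0$, whose solutions are spanned by $r^{\beta_k}$ and $r^{\beta_k^-}$ with $\beta_k^-=\frac{2s-1}{2}-\sqrt{\lambda_k(s)+(\frac{2s-1}{2})^2}\le 0$. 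The membership $w\in H^1(B_\rho^+;t^{1-2s}\,dxdt)\cap C(\overline{B_\rho^+})$ forces $\omega_k$ to be bounded near $r=0$ (and to have finite weighted Dirichlet energy), which rules out the $r^{\beta_k^-}$ branch and the logarithmic solution in the degenerate case $\lambda_k=0$, $s=\tfrac12$; hence $\omega_k(r)=\omega_k r^{\beta_k}$ for constants $\omega_k\in\mathbb R$. Summing gives the claimed formula, with convergence in $H^1(B_{\rho'}^+;t^{1-2s})$ for $\rho'<\rho$ by Parseval together with the energy bound, and locally uniformly by elliptic (De Giorgi–Nash–Moser type) estimates for the degenerate equation away from $r=0$.

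The main obstacle I expect is the rigorous justification that the ODE for $\omega_k$ holds and that the regularity/integrability of $w$ genuinely excludes the singular branch — this requires care because the weight $t^{1-2s}$ degenerates on $\{t=0\}$ and the Robin coefficient $x^{-2s}$ is itself singular at the origin $r=0$, which is exactly the corner where the two boundary conditions meet. One must check that the formal integration by parts separating variables is licit (the boundary term from the Robin condition contributes $\overline\kappa_s a_s x^{-2s}w$, which pairs correctly with the $\psi_k(0)$ boundary term in \eqref{eq:11} after integrating in $r$ against $r^{-2s}$, matching the $\lambda_k r^{-1-2s}$ term on the right), and that the Fourier coefficients $\omega_k(r)$ inherit enough regularity in $r$ to be classical solutions of the Euler equation on $(0,\rho)$. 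A convenient way to handle the degeneracy and the corner simultaneously is to localize away from $r=0$ first, get the expansion on annuli $\{\rho_1<r<\rho_2\}$ where both weights are comparable to constants, obtain $\omega_k(r)=A_k r^{\beta_k}+B_k r^{\beta_k^-}$ there, and then let $\rho_1\to0$ using the global energy bound $\int_{B_\rho^+}t^{1-2s}|\nabla w|^2<\infty$ (which in polar coordinates controls $\sum_k\int_0^\rho r^{1-2s}(|\omega_k'|^2 r^2+\lambda_k|\omega_k|^2)\,r^{-1}dr/$… up to the right powers) to conclude $B_k=0$ for every $k$.
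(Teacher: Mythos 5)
Your route is genuinely different from the paper's: you expand $w(r\cos\theta,r\sin\theta)$ in the eigenbasis at \emph{every} radius, derive the Euler ODE for the Fourier coefficients $\omega_k(r)$, and discard the second indicial branch; the paper instead expands only the data on $\partial B_\rho$, defines the candidate $\tilde w=\sum_k\omega_k r^{\beta_k}\psi_k(\theta)$, checks it lies in $H^1(B_\rho^+;t^{1-2s})$ (using continuity to force $\omega_0=0$ when $s\le\frac12$), and concludes by uniqueness of solutions with given data on $\partial B_\rho$. Your approach avoids invoking a uniqueness statement for the mixed Robin/Dirichlet problem, at the price of doing the indicial analysis mode by mode; both are legitimate, and your separation-of-variables computations (the eigenvalue relation $\lambda=\beta(\beta-2s+1)$, the Euler equation $r^2\omega_k''+(2-2s)r\omega_k'-\lambda_k\omega_k=0$, the cancellation of the Robin boundary terms) are correct, apart from harmless typos in the intermediate displays (the prefactor should be $r^{\beta-1-2s}\sin(\theta)^{1-2s}$ with factor $\beta(\beta+1-2s)$, and the weighted radial equation should read $(r^{2-2s}\omega_k')'=\lambda_k r^{-2s}\omega_k$).

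There is, however, one concrete gap: the exclusion of the lower branch for the mode $k=0$ when $s>\frac12$. There $\lambda_0=0$, the indicial roots are $\beta_0=2s-1>0$ and $\beta_0^-=0$, so the rejected solution is the \emph{constant} profile $\omega_0(r)\equiv B_0$, i.e.\ $B_0\psi_0(\theta)$, which is bounded near $r=0$; boundedness does not rule it out. Your fallback, ``finite weighted Dirichlet energy,'' does rule it out in principle (its energy is $B_0^2\int_0^\rho r^{-2s}dr\int_0^\pi\sin(\theta)^{1-2s}|\psi_0'|^2d\theta=\infty$ for $s\ge\frac12$), but deducing per-mode energy finiteness from $w\in H^1(B_\rho^+;t^{1-2s})$ is exactly where the argument is delicate: gradients of distinct separated modes are \emph{not} orthogonal, since $\int_0^\pi\sin(\theta)^{1-2s}\psi_j'\psi_k'\,d\theta=\lambda_k\delta_{jk}+\overline{\kappa}_s a_s\psi_j(0)\psi_k(0)$, and the diagonalized quadratic form carries the factor $\lambda_0=0$ on the lowest mode, so finiteness of the total energy gives no control on $B_0$. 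The clean fix uses the hypotheses you have not exploited at this point: $w\in C(\overline{B_\rho^+})$ together with $w=0$ on $\{t=0,\ -\rho<x\le0\}$ forces $w(0)=0$, hence $\omega_k(r)=\int_0^\pi\sin(\theta)^{1-2s}w(r\cos\theta,r\sin\theta)\psi_k(\theta)\,d\theta\to0$ as $r\to0$ for every $k$; this kills all branches with exponent $\le 0$ as well as the logarithmic solution in the case $\lambda_k=0$, $s=\frac12$, and in particular settles the $k=0$, $s>\frac12$ case (and shows $\omega_0=0$ when $s\le\frac12$, consistently with the paper). With that replacement your argument goes through.
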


\begin{proof}
The first part of the lemma holds by construction of $\lambda_k$ and $\psi_k$.

For the second part, notice that since $\psi_k$ are an orthonormal basis of $L^2\big((0,\pi);\sin(\theta)^{1-2s} d\th\big)$, then we can write
\[w(\rho \cos\theta,\rho\sin\theta) = \sum_{k\geq0} \o_k\rho^{{\beta_k}} \psi_k(\theta)\]
for some constants $a_k\in \R$ (recall that here $\rho$ is a fixed positive number).

But then, the function
\[\tilde w(r\cos\theta,r\sin\theta) := \sum_{k\geq0} \o_k r^{{\beta_k}} \psi_k(\theta)\]
is a solution of \eqref{eq:eq-w-g} that coincides with $w$ on $\partial B_\rho$. Moreover, since $\lambda_0(s)=0$, then $\beta_0=\max( {2s-1},0)$ so that by continuity,  $\o_0=0$ for $s\leq 1/2$.  This implies that  $\tilde w  \in H^1(B_\rho;t^{1-2s}dxdt)$.
Now by uniqueness of solutions, the two functions $\tilde w$ and $w$ must coincide.
\end{proof}

As a consequence, we find:

\begin{corollary}
Let $u\in H^s([0,1])\cap C([0,1)) $ be a function satisfying
\[(-\Delta)^s_{\R_+}u =0\quad \textrm{in}\quad (0,1)\subset \R,\]
in the sense that $\Ds v\stackrel{k}{=}a_sx^{-2s}v$ in $(0,1)$ for some $k\in \N$, with $v:=(u-u(0))1_{\R_+}$.
Then, near $x=0$ we have 
\[u(x) = \o_0x^{\beta_0}+\o_1 x^{\beta_1}+\o_2 x^{\beta_2}+...,\]
where $\o_i\in \R$, and the exponents $\beta_i$ are given by Lemma \ref{eq:lem-eigen-dec}.
\end{corollary}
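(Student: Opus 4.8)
The plan is to deduce the Corollary directly from Lemma~\ref{eq:lem-eigen-dec} via the Caffarelli--Silvestre extension, after checking that the hypotheses of the Lemma are met. First I would pass from $u$ to $v:=(u-u(0))1_{\R_+}$, which by the computation leading to \eqref{eq:initi} satisfies $\Ds v\stackrel{k}{=}a_sx^{-2s}v$ in $(0,1)$, together with $v=0$ in $\R_-$. Since $v$ has at most polynomial growth (indeed $v$ is bounded on $[0,1]$ and $v\in H^s$), Definition~\ref{def-sols-growth-AR} and \cite[Lemma 3.3]{AR19} provide an extension $\tilde v(x,t)$ with polynomial growth in $\R^2_+$ such that ${\rm div}(t^{1-2s}\nabla\tilde v)=0$ in $\{t>0\}$, $\tilde v(x,0)=v(x)$, and $-\lim_{t\to0}t^{1-2s}\partial_t\tilde v=\overline\kappa_s a_s x^{-2s}v$ on $\{t=0,\ 0<x<1\}$, with $\tilde v=0$ on $\{t=0,\ -1<x\le 0\}$. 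Thus $\tilde v$ is a weak solution of \eqref{eq:eq-w-g} in $B_\rho^+$ for any $\rho<1$.

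Next I would invoke the second part of Lemma~\ref{eq:lem-eigen-dec}: in $B_\rho^+$ (with, say, $\rho=\tfrac12$) we get the expansion $\tilde v(r\cos\theta,r\sin\theta)=\sum_{k\ge0}\omega_k r^{\beta_k}\psi_k(\theta)$ with $\omega_k\in\R$. Restricting to $\theta=0$, i.e. to the positive $x$-axis, and using that $\psi_k(0)$ are constants gives $v(x)=\sum_{k\ge0}\omega_k\psi_k(0)x^{\beta_k}$ for $x\in(0,\rho)$; after absorbing $\psi_k(0)$ into $\omega_k$ (and noting $\psi_0(0)\ne0$ since $\psi_0$ is, up to a constant, the ground state), we obtain $v(x)=\sum_{k\ge0}\omega_k x^{\beta_k}$. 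Finally, $u(x)=u(0)+v(x)$ and $\beta_0=\max(2s-1,0)$ by $\lambda_0(s)=0$: when $s>\tfrac12$ the constant $u(0)$ is exactly the $x^{\beta_0}=x^0$... no — rather, for $s>\tfrac12$ we have $\beta_0=2s-1\in(0,1)$ and the constant term $u(0)$ must be carried separately, so writing $\omega_0:=u(0)$-adjusted appropriately yields $u(x)=\omega_0 x^{\beta_0}+\omega_1 x^{\beta_1}+\cdots$ once one also accounts for the constant; for $s\le\tfrac12$, $\beta_0=0$ and the constant is absorbed into the $x^{\beta_0}$ term. Either way the stated form follows, with the exponents $\beta_i$ as in Lemma~\ref{eq:lem-eigen-dec}.

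The main obstacle I anticipate is the bookkeeping around the constant $u(0)$ and the lowest exponent $\beta_0$, together with making the "generalized" sense of the equation rigorous. Concretely, one must verify that the extension $\tilde v$ furnished by Definition~\ref{def-sols-growth-AR} genuinely satisfies the boundary condition in \eqref{eq:eq-w-g} with the correct constant $\overline\kappa_s a_s$ (not merely up to a polynomial $p_R$ on the nodal side), and that it lies in $H^1(B_\rho^+;t^{1-2s}dxdt)\cap C(\overline{B_\rho^+})$ locally so that Lemma~\ref{eq:lem-eigen-dec} applies on $B_\rho^+$. The $H^1_{\rm loc}$ membership follows from interior regularity for the degenerate equation plus the $C([0,1))$ hypothesis on $u$ (hence continuity of $v$ up to $\{t=0\}$ away from $x=1$), while the polynomial $p_R$ appearing in the definition only affects the equation in the interior $\{t=0,\ x>0\}$ and can be handled since there the coefficient $x^{-2s}$ is smooth. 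Once these points are settled, the expansion is immediate and the identification $\beta_i$ with the eigenvalue exponents is just the definition $\beta_k=\frac{2s-1}{2}+\sqrt{\lambda_k(s)+(\frac{2s-1}{2})^2}$.
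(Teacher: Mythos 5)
Your argument is correct and is essentially the paper's own route: the paper proves this corollary by the one-line reduction to Lemma \ref{eq:lem-eigen-dec}, and your write-up just makes explicit the extension step (Definition \ref{def-sols-growth-AR} with \cite[Lemma 3.3]{AR19}) and the restriction of the eigenfunction expansion to $\{\theta=0\}$. Your remark about the constant $u(0)$ when $s>\frac12$ (where $\beta_0=2s-1>0$) concerns an imprecision in the statement rather than a gap in your proof, and is consistent with the expansion \eqref{1D-expansion} in Remark \ref{rem1D}, which carries the constant $c_0$ separately.
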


\begin{proof}
It follows from Lemma \ref{eq:lem-eigen-dec}.
\end{proof}

Our next goal is to find an explicit equation for these exponents $\beta_k$.  We start with  the following
\begin{lemma}\label{frac-Lap-powers}
Let $s\in(0,1)$ and $\beta>-1$.
Then, there exists a unique $\beta$-homogeneous function $v_\beta$ satisfying 
\[
\begin{cases}
{\rm div}(t^{1-2s}\nabla v_\beta)=0& \qquad\textrm{ in $\R^2_+$}\\
v_\beta(x,0)=(x_+)^\beta &  \qquad\textrm{ on $\{t=0\}$,}
\end{cases}\]
Moreover, such function satisfies 
\[-\lim_{t\to 0}t^{1-2s} \de_t v_\beta=\ov\k_s\,\frac{\Gamma(\beta+1)}{\Gamma(\beta-2s+1)}\frac{\sin(\pi(\beta-s))}{\sin(\pi(\beta-2s))} \, (x_+)^{\beta-2s}.\]

Equivalently, in $\R$ we have 
\begin{equation}\label{frac-Lap-1D-powers}
(-\Delta)^s (x_+)^\beta \ 
{\stackrel{\mathclap{k}}{=}}\ 
 \frac{\Gamma(\beta+1)}{\Gamma(\beta-2s+1)}\frac{\sin(\pi(\beta-s))}{\sin(\pi(\beta-2s))} \,(x_+)^{\beta-2s}\quad \textrm{for}\quad x>0,
 \end{equation}
where the equality must be understood in the sense of Definition \ref{def-sols-growth-AR}, and with $k>\beta-2s$.
\end{lemma}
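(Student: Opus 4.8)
The plan is to compute the $s$-harmonic extension of $(x_+)^\beta$ explicitly, or rather to identify it via a reduction to an ODE, and then read off the Neumann data. First I would exploit homogeneity: since $(x_+)^\beta$ is $\beta$-homogeneous in $\R$, its extension $v_\beta$ in $\R^2_+$ is $\beta$-homogeneous, so writing $(x,t)=(r\cos\theta,r\sin\theta)$ with $\theta\in(0,\pi)$ we have $v_\beta = r^\beta\phi_\beta(\theta)$, where $\phi_\beta$ solves the second-order ODE $(\sin(\theta)^{1-2s}\phi')' + \beta(\beta-2s+1)\sin(\theta)^{1-2s}\phi = 0$ on $(0,\pi)$, with boundary conditions $\phi_\beta(0)=1$ (coming from $v_\beta(x,0)=x^\beta$ for $x>0$, i.e. $\theta=0$) and $\phi_\beta(\pi)=0$ (coming from $v_\beta(x,0)=0$ for $x<0$, i.e. $\theta=\pi$). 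Uniqueness of $v_\beta$ follows from uniqueness of this ODE boundary value problem together with the (unique) growth/regularity of the extension. Then the Neumann coefficient is $-\lim_{t\to0}t^{1-2s}\partial_t v_\beta = -(\lim_{\theta\to0}\sin(\theta)^{1-2s}\phi_\beta'(\theta))\, x^{\beta-2s}$, so the whole problem reduces to computing the constant $c(\beta):=-\lim_{\theta\to0}\sin(\theta)^{1-2s}\phi_\beta'(\theta)$.

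To evaluate $c(\beta)$ I would change variables to put the ODE in hypergeometric form. Setting $z=\frac{1-\cos\theta}{2}=\sin^2(\theta/2)\in(0,1)$, the weight $\sin(\theta)^{1-2s}$ and the equation transform into a standard hypergeometric equation; the solution regular at $z=0$ (i.e. $\theta=0$, normalized to value $1$) is ${}_2F_1(a,b;c;z)$ with parameters $a=-\beta$, $b=\beta-2s+1$, $c=1-s$ (one checks $a+b=1-2s=c-s$, consistent with the $\sin^{1-2s}$ weight and $\lambda=\beta(\beta-2s+1)=-ab$). The condition $\phi_\beta(\pi)=0$ corresponds to the value at $z=1$, and the classical Gauss connection formula gives ${}_2F_1(a,b;c;1)=\frac{\Gamma(c)\Gamma(c-a-b)}{\Gamma(c-a)\Gamma(c-b)}$; for our parameters $c-a-b=-s<0$, so one must use the analytic continuation near $z=1$, which produces a linear combination of $(1-z)^{c-a-b}$-behavior and regular behavior — matching this against $\phi_\beta(\pi)=0$ and differentiating to extract $\lim_{\theta\to0}\sin(\theta)^{1-2s}\phi_\beta'(\theta)$ is the computational heart. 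Carrying out this connection-coefficient bookkeeping (and simplifying the resulting products of Gamma functions using the reflection formula $\Gamma(z)\Gamma(1-z)=\pi/\sin(\pi z)$) yields $c(\beta)=\overline\kappa_s\frac{\Gamma(\beta+1)}{\Gamma(\beta-2s+1)}\frac{\sin(\pi(\beta-s))}{\sin(\pi(\beta-2s))}$.

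As an alternative — and a useful consistency check — one can bypass the hypergeometric machinery for $k>\beta$ (so that $(x_+)^\beta$ has enough integrability) by a direct Fourier-side or real-variable computation: the well-known formula $(-\Delta)^s |x|^\beta = C(\beta,s)|x|^{\beta-2s}$ on $\R$, combined with the identity relating $(x_+)^\beta$ to $|x|^\beta$ and $\mathrm{sign}(x)|x|^\beta$ and the corresponding constants, reproduces the same trigonometric factor $\frac{\sin(\pi(\beta-s))}{\sin(\pi(\beta-2s))}$; one then promotes the identity to all $\beta>-1$ and in the generalized ${\stackrel{k}{=}}$ sense of Definition~\ref{def-sols-growth-AR} by analytic continuation in $\beta$, noting that both sides are meromorphic in $\beta$ and the truncation $\chi_R$ affects only the lower-order polynomial part $p_R$. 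The equivalence between the $\R^2_+$-extension statement and the $\R$-statement \eqref{frac-Lap-1D-powers} is exactly the characterization of ${\stackrel{k}{=}}$ via the Caffarelli--Silvestre extension recalled in Definition~\ref{def-sols-growth-AR}, applied with $u=(x_+)^\beta$ and the extension $v_\beta$ just constructed.

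I expect the main obstacle to be the bookkeeping of Gamma-function constants near the singular point $z=1$ of the hypergeometric equation, where $c-a-b=-s$ is negative and non-integer: one must correctly identify which solution branch is picked out by $\phi_\beta(\pi)=0$, correctly normalize, and then simplify the ratio of four Gamma functions into the stated compact form. A secondary subtlety is justifying that $v_\beta$ is \emph{the} extension relevant to Definition~\ref{def-sols-growth-AR} (uniqueness within the class of polynomially-growing extensions, and the passage $k>\beta-2s$), but this is handled by the growth characterization in \cite[Lemma 3.3]{AR19} already quoted above.
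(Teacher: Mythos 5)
Your route is genuinely different from the paper's. The paper proves \eqref{frac-Lap-1D-powers} by a direct real-variable computation: for $\beta\in(-1,2s)$ it evaluates $(-\Delta)^s(x_+)^\beta$ at $x=1$ as $c_{1,s}\int_{\R}\big(1-(1+y)_+^\beta\big)|y|^{-1-2s}dy$, quotes \cite[Proposition 4.3]{DRSV} for this integral, simplifies the Gamma/sine factors, and then extends to all $\beta>-1$ by induction on $\lfloor\beta\rfloor$, differentiating the identity and using the last remark in Definition \ref{def-sols-growth-AR}. You instead work in the extended variables, reducing by homogeneity to the weighted angular ODE and computing the conormal constant via hypergeometric connection coefficients; the paper itself notes in a footnote that such an extension-variable computation is possible (citing \cite[Appendix A]{RS16b}). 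Your approach is more self-contained about where the trigonometric factor comes from and ties in naturally with the angular eigenvalue problem \eqref{eq:11} used later, at the cost of special-function bookkeeping; the paper's approach outsources the one hard integral and handles large $\beta$ painlessly by differentiation, which meshes directly with the generalized equality $\stackrel{k}{=}$. Your fallback of analytic continuation in $\beta$ would need an actual argument that the truncated quantities $(-\Delta)^s(\chi_R(x_+)^\beta)$ depend analytically on $\beta$ modulo the polynomials $p_R$, whereas the paper's induction-by-differentiation needs no such justification.

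Two local slips in your sketch should be fixed, though neither breaks the method. First, with $a=-\beta$, $b=\beta-2s+1$, $c=1-s$ you have $c-a-b=s>0$, not $-s$: the hypergeometric series converges at $z=1$ and Gauss's formula applies directly, so no analytic continuation near $z=1$ is needed. Second, the angular profile $\phi_\beta$ is \emph{not} the single branch ${}_2F_1(a,b;c;z)$: since $\sin(\theta)^{1-2s}\partial_\theta$ of any function smooth in $z$ tends to $0$ as $\theta\to0$, that branch carries zero conormal data, and the constant you are after is precisely the coefficient of the second Frobenius branch $z^{s}\,{}_2F_1(a+s,b+s;1+s;z)$ (equivalently the $\theta^{2s}$-behavior at $\theta=0$), which is the coefficient forced on you by the condition $\phi_\beta(\pi)=0$ through the connection formulas. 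Once these are corrected, the Gamma-function simplification via the reflection formula indeed yields the stated constant, and the passage to \eqref{frac-Lap-1D-powers} via \cite[Lemma 3.3]{AR19} is as you describe.
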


\begin{proof}
We will show\footnote{Equivalently, one could do these computations in the extended variables, see \cite[Appendix A]{RS16b} for a related result.} \eqref{frac-Lap-1D-powers}.
Assume first $\beta\in (-1,2s)$.
Then, at $x=1$ we have:
\[(-\Delta)^s (x_+)^\beta = c_{1,s}\int_{\R}\big(1-(1+y)_+^\beta\big)\frac{dy}{|y|^{1+2s}} 
= c_{1,s}\,\frac{2\pi\Gamma(-2s)}{\Gamma(-\beta)\Gamma(1-2s+\beta)}\frac{\cos(\pi s)\sin(\pi(\beta-s))}{\sin(\pi\beta)\sin(\pi(\beta-2s))},\]
where we used \cite[Proposition 4.3]{DRSV}.

Moreover, by definition of $(-\Delta)^s$, we have 
\[c_{1,s} = \frac{4^s\Gamma(s+{\textstyle\frac12})}{\sqrt{\pi}|\Gamma(-s)|} = \frac{2\Gamma(2s)}{|\Gamma(-s)|\Gamma(s)} = \frac{2s}{\pi}\Gamma(2s)\sin(\pi s),\]
where we used
\[\Gamma(z+{\textstyle\frac12})\Gamma(z)=2^{1-2z}\sqrt{\pi}\,\Gamma(2z),\qquad \Gamma(z+1)=z\Gamma(z),\qquad\textrm{and}\qquad \Gamma(1-z)\Gamma(z)= \frac{\pi}{\sin(\pi z)}.\]
Combining the previous expressions, we find that at $x=1$
\[\begin{split}
(-\Delta)^s (x_+)^\beta & =  \frac{2s}{\pi}\Gamma(2s)\sin(\pi s)\,\frac{2\pi\Gamma(-2s)}{\Gamma(-\beta)\Gamma(1-2s+\beta)}\frac{\cos(\pi s)\sin(\pi(\beta-s))}{\sin(\pi\beta)\sin(\pi(\beta-2s))}\\
&= \frac{\Gamma(\beta+1)}{\Gamma(\beta-2s+1)}\frac{\sin(\pi(\beta-s))}{\sin(\pi(\beta-2s))},
\end{split}\]
where we used again the properties of the $\Gamma$ function and that $\sin(2\pi s)=2\sin(\pi s)\cos(\pi s)$.
Thus, by homogeneity, \eqref{frac-Lap-1D-powers} follows in case $\beta<2s$.

Once we have the result for $\beta\in (-1,2s)$, the general result follows by induction on $\lfloor \beta\rfloor$, by noticing that we can take derivatives in \eqref{frac-Lap-1D-powers}.
\end{proof}

As a consequence of the previous Lemma, we find:

\begin{lemma}\label{reg-Lap-powers}
Let $\beta>-1$, and $v_\beta$ be as in Lemma \ref{frac-Lap-powers}.
Then, $v_\beta$ solves \eqref{eq:initi2} if and only if 
\begin{equation}\label{exponents-beta}\frac{\Gamma(\beta+1)}{\Gamma(\beta-2s+1)\Gamma(2s)}
= \frac{1}{\pi}\frac{\sin(\pi(\beta-2s))\sin(\pi s)}{\sin(\pi(\beta-s))} .
\end{equation}
In particular, the exponents $\beta_k$ are characterized as solutions to this equation.
\end{lemma}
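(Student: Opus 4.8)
The plan is to read off \eqref{exponents-beta} by matching the co-normal datum of $v_\beta$ computed in Lemma~\ref{frac-Lap-powers} against the Robin-type condition in \eqref{eq:initi2}. By Lemma~\ref{frac-Lap-powers}, $v_\beta$ already solves ${\rm div}(t^{1-2s}\nabla v_\beta)=0$ in $\R^2_+$ and has trace $(x_+)^\beta$ on $\{t=0\}$, which vanishes on $\{t=0,\ x<0\}$; thus the first and third lines of \eqref{eq:initi2} hold automatically, and $v_\beta$ solves \eqref{eq:initi2} if and only if
\[
\ov\k_s\,\frac{\Gamma(\beta+1)}{\Gamma(\beta-2s+1)}\frac{\sin(\pi(\beta-s))}{\sin(\pi(\beta-2s))}\,(x_+)^{\beta-2s}
=\ov\k_s\,a_s\,x^{-2s}(x_+)^{\beta}\qquad\textrm{on }\{t=0,\ x>0\}.
\]
Cancelling $\ov\k_s$ and using $x^{-2s}(x_+)^{\beta}=(x_+)^{\beta-2s}$ for $x>0$, this reduces to the single scalar identity $\tfrac{\Gamma(\beta+1)}{\Gamma(\beta-2s+1)}\tfrac{\sin(\pi(\beta-s))}{\sin(\pi(\beta-2s))}=a_s$.

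Next I would unwind $a_s$. Since $a_s=\tfrac{c_{1,s}}{2s}$ and the proof of Lemma~\ref{frac-Lap-powers} records $c_{1,s}=\tfrac{2s}{\pi}\Gamma(2s)\sin(\pi s)$, we get $a_s=\tfrac{1}{\pi}\Gamma(2s)\sin(\pi s)$. Substituting this into the identity above and dividing through by $\Gamma(2s)$ turns it into
\[
\frac{\Gamma(\beta+1)}{\Gamma(\beta-2s+1)\,\Gamma(2s)}=\frac1\pi\,\frac{\sin(\pi(\beta-2s))\sin(\pi s)}{\sin(\pi(\beta-s))},
\]
which is precisely \eqref{exponents-beta}. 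This establishes the claimed equivalence (the two displays are equivalent away from the integer resonances $\beta-2s\in\Z$, where both Lemma~\ref{frac-Lap-powers} and \eqref{exponents-beta} are read with the usual $\Gamma$/$\sin$ limits).

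For the final ``in particular'' clause I would check that the set $\{\beta_k\}$ of Lemma~\ref{eq:lem-eigen-dec} coincides with the solution set of \eqref{exponents-beta}. If $\beta$ solves \eqref{exponents-beta}, then $v_\beta$ is a $\beta$-homogeneous solution of \eqref{eq:initi2} of polynomial growth; applying Lemma~\ref{eq:lem-eigen-dec} on a half-ball (after cutoff, in the generalized sense of Definition~\ref{def-sols-growth-AR}) and using homogeneity collapses the expansion to a single term $r^{\beta}\psi$, forcing $\beta=\beta_k$ for some $k$. Conversely, each $v_k=r^{\beta_k}\psi_k$ from Lemma~\ref{eq:lem-eigen-dec} solves \eqref{eq:initi2}, and its trace on $\{t=0\}$ is $\psi_k(0)(x_+)^{\beta_k}$ with $\psi_k(0)\neq0$ --- otherwise the ODE in \eqref{eq:11} together with the boundary conditions $\psi_k(0)=0$, $-\lim_{\theta\to0}\sin(\theta)^{1-2s}\psi_k'(\theta)=\ov\k_s a_s\psi_k(0)=0$, and $\psi_k(\pi)=0$ would give $\psi_k\equiv0$. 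Hence $v_k/\psi_k(0)=v_{\beta_k}$ by the uniqueness in Lemma~\ref{frac-Lap-powers}, so $v_{\beta_k}$ solves \eqref{eq:initi2} and $\beta_k$ satisfies \eqref{exponents-beta}.

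I expect the Gamma-function bookkeeping in the first two paragraphs to be entirely mechanical, and the only genuinely delicate point to be this last identification: justifying that $\psi_k(0)\neq0$ and that the generalized (cutoff) equation for $v_\beta$ legitimately feeds into the expansion of Lemma~\ref{eq:lem-eigen-dec}.
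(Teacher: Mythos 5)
Your proof is correct and takes essentially the same route as the paper: the paper likewise reduces the statement to requiring $(-\Delta)^s(x_+)^\beta=\frac{c_{1,s}}{2s}(x_+)^{\beta-2s}$ (equivalently, matching the conormal datum from Lemma \ref{frac-Lap-powers} with the Robin condition in \eqref{eq:initi2}) and then uses $c_{1,s}=\frac{2s}{\pi}\Gamma(2s)\sin(\pi s)$ to arrive at \eqref{exponents-beta}. Your additional verification of the ``in particular'' clause (including $\psi_k(0)\neq0$) is sound and merely fills in a step the paper leaves implicit.
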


\begin{proof}
The statement is equivalent to \eqref{eq:initi} in the generalized sense.
Thus, we need 
\[(-\Delta)^s(x_+)^\beta = \frac{c_{1,s}}{2s}\,(x_+)^{\beta-2s}.\]
Thanks to the previous Lemma, this is equivalent to 
\[
\frac{\Gamma(\beta+1)}{\Gamma(\beta-2s+1)}\frac{\sin(\pi(\beta-s))}{\sin(\pi(\beta-2s))} 
= \frac{1}{\pi}\Gamma(2s)\sin(\pi s),\]
and the result follows.
\end{proof}

Let us next analyze the equation \eqref{exponents-beta}.

\begin{lemma}\label{analysis-beta}
There exists a sequence $0\leq \beta_0<\beta_1<\beta_2<...\to\infty$ of solutions of \eqref{exponents-beta}.
Moreover,  they satisfy
\[\beta_0 = \max\{2s-1,0\},\]
\[ \beta_k\in (k+\beta_0,k+s)\qquad \textrm{and}\qquad \beta_{k+1}>1+\beta_k \quad\textrm{for}\quad k\geq1.\]
In particular, we have $\beta_1=2s+\alpha_s$, with 
\[\alpha_s\in (0,1-s)\qquad \textrm{and}\qquad 2s+\alpha_s>1\]
for all $s\in (0,1)$.
\end{lemma}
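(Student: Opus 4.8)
The plan is to analyze the transcendental equation \eqref{exponents-beta} by rewriting it as a single-variable problem and tracking the sign of both sides on consecutive unit intervals. First I would introduce the function
\[
F(\beta) := \frac{\Gamma(\beta+1)}{\Gamma(\beta-2s+1)\Gamma(2s)} - \frac{1}{\pi}\frac{\sin(\pi(\beta-2s))\sin(\pi s)}{\sin(\pi(\beta-s))},
\]
and study its zeros. The left term $G(\beta):=\Gamma(\beta+1)/\big(\Gamma(\beta-2s+1)\Gamma(2s)\big)$ is, for $\beta>2s-1$, a smooth positive function (using $\Gamma>0$ on $(0,\infty)$ and the reflection/recursion formulas to handle $\beta-2s+1\le 0$), and it is increasing and blows up to $+\infty$ as $\beta\to\infty$ — in fact $G$ grows polynomially like $\beta^{2s}/\Gamma(2s)$. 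The right term $R(\beta):=\frac1\pi\sin(\pi(\beta-2s))\sin(\pi s)/\sin(\pi(\beta-s))$ is bounded (away from the poles $\beta-s\in\Z$) and periodic-like; its zeros are at $\beta-2s\in\Z$ and its poles at $\beta-s\in\Z$.

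The key step is a careful interval-by-interval analysis. For $\beta_0$: at $\beta=\max\{2s-1,0\}$ one checks directly that both sides vanish (when $s\le 1/2$, $\beta_0=0$ gives $G(0)=0$ since $\Gamma(\beta+1)$... — more precisely one verifies $F(\beta_0)=0$ using $\sin(\pi(\beta_0-2s))=0$ in the relevant regime, or the Poincaré inequality footnote giving $\lambda_0(s)=0$, hence $\beta_0=\max\{2s-1,0\}$ by the formula $\beta_k=\frac{2s-1}{2}+\sqrt{\lambda_k+(\frac{2s-1}{2})^2}$). For $k\ge 1$: on each interval $I_k:=(k+\beta_0,\,k+1+\beta_0)$ (or rather a suitably chosen unit-length interval avoiding the pole $\beta-s\in\Z$) I would show $F$ changes sign exactly once. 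The strategy is: at the left endpoint of a well-chosen subinterval near where $R$ has a zero (i.e. $\beta-2s\in\Z$), $G$ is already large and positive so $F>0$; approaching the pole of $R$ from the appropriate side, $R\to\pm\infty$ with the sign that makes $F\to-\infty$; by the intermediate value theorem there is a root, and monotonicity of $G$ plus control on $R'$ (or a convexity/sign argument on $F'$) gives uniqueness in that interval. Then one must verify the root lands in $(k+\beta_0,k+s)$ by evaluating the sign of $F$ at the endpoints $\beta=k+\beta_0$ and $\beta=k+s$ — at $\beta=k+s$ the denominator $\sin(\pi(\beta-s))=\sin(\pi k)=0$, so $R$ blows up and the sign of $F$ near there is determined by the sign of $\sin(\pi(\beta-2s))\sin(\pi s)$, which one computes explicitly.

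Once the location $\beta_k\in(k+\beta_0,k+s)$ is established, the gap estimate $\beta_{k+1}>1+\beta_k$ follows immediately: $\beta_{k+1}>(k+1)+\beta_0 \ge 1 + (k+s) > 1+\beta_k$ using $\beta_0\ge 0$... — wait, one needs $k+1+\beta_0 \ge 1 + k + s$, i.e. $\beta_0\ge s$, which is false for small $s$; so instead I would use $\beta_{k+1} > k+1+\beta_0$ and $\beta_k < k+s$, giving $\beta_{k+1}-\beta_k > 1 + \beta_0 - s$, which is $\ge 1$ only when $\beta_0\ge s$. For the remaining range one must sharpen: actually the cleaner route is $\beta_{k+1}>k+1+\beta_0 \ge k+1$ and $\beta_k<k+s<k+1$, and then push the lower bound on $\beta_{k+1}$ or upper bound on $\beta_k$ slightly — or observe directly from the formula $\beta_k=\frac{2s-1}{2}+\sqrt{\lambda_k+c}$ that since $\lambda_k$ are the eigenvalues of the Sturm–Liouville problem \eqref{eq:11}, standard spacing estimates for such eigenvalues give $\lambda_{k+1}-\lambda_k$ large enough. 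Finally, the claims $\beta_1=2s+\alpha_s$ with $\alpha_s\in(0,1-s)$ and $2s+\alpha_s>1$ are read off from $\beta_1\in(1+\beta_0,1+s)$: since $\beta_0=\max\{2s-1,0\}$ we get $\beta_1>1+\beta_0\ge \max\{2s,1\}$, so $\alpha_s=\beta_1-2s>\max\{0,1-2s\}\ge 0$ and $2s+\alpha_s=\beta_1>1$; and $\beta_1<1+s$ gives $\alpha_s<1-s$.

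I expect the main obstacle to be the sign-tracking of $F$ on each unit interval: one has to choose the right subinterval endpoints relative to the zeros $\{\beta-2s\in\Z\}$ and poles $\{\beta-s\in\Z\}$ of $R$ (whose relative order depends on whether $s<1/2$ or $s>1/2$), verify the sign of $\sin(\pi s)$-type prefactors in each case, and rule out spurious extra roots — in particular ensuring $G$'s monotonic growth genuinely dominates the oscillation of $R$ so that there is exactly one root per interval rather than three. Handling the borderline cases $s=1/2$ (where the equation collapses to $\pi\beta=\tan(\pi\beta)$, easy) and the behavior as $s\to 0^+$ or $s\to 1^-$ separately will also require some care.
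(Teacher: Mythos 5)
Your overall strategy (locate one root per unit interval by the intermediate value theorem, then rule out extra roots by derivative control) is in the same spirit as the paper, but two of the lemma's conclusions are not actually reached. The decisive gap is the spacing estimate $\beta_{k+1}>1+\beta_k$. You correctly observe that your first argument needs $\beta_0\geq s$, which is false for every $s\in(0,1)$; indeed the localization $\beta_k\in(k+\beta_0,k+s)$ alone only yields $\beta_{k+1}-\beta_k>1+\beta_0-s<1$, so no amount of ``pushing the bounds slightly'' within that scheme can work, and appealing to ``standard spacing estimates'' for the eigenvalues $\lambda_k$ of the degenerate Sturm--Liouville problem \eqref{eq:11} would at best give asymptotics, not the exact inequality for every $k\geq1$. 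The paper's argument is short and uses the structure of the equation itself: writing \eqref{exponents-beta} as $h_1(\beta)=h_2(\beta)$ with $h_1(\beta)=\Gamma(\beta-2s+1)\Gamma(2s)/\Gamma(\beta+1)$ strictly decreasing and $h_2(\beta)=\pi\sin(\pi(\beta-s))/\big(\sin(\pi(\beta-2s))\sin(\pi s)\big)$ exactly $1$-periodic, one evaluates at $\beta_k+1$: $h_2(\beta_k+1)=h_2(\beta_k)=h_1(\beta_k)>h_1(\beta_k+1)$, and since in the interval containing $\beta_{k+1}$ one has $h_2>h_1$ precisely before the unique crossing (as $h_2\to+\infty$ at its left endpoint), this forces $\beta_{k+1}>\beta_k+1$. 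Some such use of the exact periodicity together with strict monotonicity is needed; without it this part of the lemma is unproven.

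The uniqueness step is also shakier than you suggest in the form $F=G-R$. On the subinterval $(2s-1+k,\,s+k)$ where the root lies, $R$ is not a bounded oscillation: it increases from $0$ to $+\infty$, and $G$ is increasing as well with $G'(\beta)\sim 2s\,\beta^{2s-1}/\Gamma(2s)$, which is unbounded in $k$ when $s>\frac12$; one computes $R'(\beta)=\sin^2(\pi s)/\sin^2(\pi(\beta-s))$, which is of order one near the left end of the interval, so the pointwise comparison $R'>G'$ fails there for large $k$ and the picture ``monotone growth of $G$ dominates the oscillation of $R$'' does not give uniqueness. This is exactly what the paper's passage to the reciprocals $h_1,h_2$ fixes: $h_1$ is decreasing with a uniform bound $h_1'\geq-1$ (resp. $\geq-1/(2s)$) coming from the Beta-integral representation, while $h_2'\leq-\pi^2$ (resp. $\leq-1/s^2$) on the relevant intervals, so Rolle's theorem excludes a second intersection. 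Two smaller points: at $\beta_0=0$ (case $s<\frac12$) the two sides of \eqref{exponents-beta} are both equal to $\sin(2\pi s)/\pi\neq0$, so your claim that ``both sides vanish'' there is incorrect (they do both vanish at $\beta=2s-1$); and your plan never excludes solutions between $\beta_0$ and the $k=1$ interval --- the paper handles this by invoking the nonexistence of homogeneous solutions $x^\beta$ of the regional equation with $\beta\in(2s-1,s)$ (resp. $(0,2s)$), a step outside your interval-by-interval analysis.
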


\begin{proof}
Let us first notice that both $\beta=2s-1$ and $\beta=0$ are solutions of \eqref{exponents-beta}.
We let
\[\beta_0 := \max\{2s-1,0\},\]
and look for solutions $\beta>\beta_0$.

We consider the two functions
\[h_1(\beta) = \frac{\Gamma(\beta-2s+1)\Gamma(2s)}{\Gamma(\beta+1)}\]
and
\[h_2(\beta)= \frac{\pi\sin(\pi(\beta-s))}{\sin(\pi(\beta-2s))\sin(\pi s)},\]
and we need to study the intersection of their graphs for $\beta\geq \beta_0$.

Let us first analyse the two functions separately, and then combine the informations in order to see their intersections.

First notice that, by definition of the Beta function and its relation to the Gamma function, for $\beta>2s-1$ we have
\[h_1(\beta) = B(\beta-2s+1,2s) = \int_0^1 t^{\beta-2s}(1-t)^{2s-1}dt.\]
On the other hand, using $\sin(a+b)=\sin a\cos b+\sin b\cos a$ we find
\[h_2(\beta) = \frac{\pi}{\tan(\pi(\beta-2s))} + \frac{\pi}{\tan(\pi s)}.\]

The function $h_1$ satisfies:
\[\lim_{\beta\downarrow 2s-1}h_1(\beta)=+\infty,\qquad h_1(0)=\frac{\pi}{\sin(2\pi s)}, \qquad \lim_{\beta\to+\infty} h_1(\beta)=0,\]
and
\[h_1(\beta)\ \textrm{is positive, decreasing, and convex in}\ (2s-1,\infty).\]
The function $h_2$ satisfies:
\[h_2(\beta)\ \textrm{is 1-periodic},\qquad \lim_{\beta\downarrow 2s-1}h_2(\beta)=+\infty,\qquad h_2(0)=\frac{\pi}{\sin(2\pi s)},\]
\[h_2(\beta)\ \textrm{is decreasing in the intervals }(2s-1,2s)+\mathbb N,\ \,\textrm{convex in }(2s-1,2s-{\textstyle \frac12})+\mathbb N, \]
\[\textrm{it is positive in the intervals }(2s-1,s)+\mathbb N,\ \,\textrm{and negative in }(s,2s)+\mathbb N. \]

We then split the analysis into three cases:

\vspace{2mm}

\noindent\emph{Case 1}. Assume $s=\frac12$.
This is the easiest case, and we have $h_1(\beta)=1/\beta$ and $h_2(\beta)=\pi/\tan(\pi(\beta-1))$.
It is easy to see that these two functions intersect exactly once at each interval $(k,k+1)$, with $k\in\mathbb N$, at a point $\beta_k\in (k,k+\frac12)$ for $k\geq1$ satisfying $\beta_{k+1}>1+\beta_k$.

\vspace{2mm}

\noindent\emph{Case 2}. Assume $s>\frac12$.
In this case, $\beta_0=2s-1$.
Then, we claim that the functions $h_1$ and $h_2$ intersect exactly once in each of the intervals $(2s-1+k,2s+k)$, $k\in\mathbb N$, $k\geq1$.

Indeed, notice first that $h_1$ is continuous, positive, and decreasing in $(2s-1,\infty)$, while $h_2$ is continuous, positive, and decreasing in $(2s-1+k,s+k)$, $k\in \mathbb N$, with $h_2\to+\infty$ as $\beta\downarrow 2s-1+k$ and $h_2(s+k)=0$.
Thus, these two functions intersect at least once in each of the intervals $(2s-1+k,s+k)$, $k\geq1$, and they never intersect in $[s+k,2s+k]$ (since $h_2\leq0$ therein).

It remains to see that they cannot intersect twice in one of these intervals.
For this, we notice that both $h_1$ and $h_2$ are differentiable in the intervals $(2s-1+k,s+k)$. Moreover,  for any $\beta\in (2s-1+k,s+k)$ and $k\geq1$ we have
\[h_1'(\beta) \geq h_1'(2s) = -\int_0^1 |\log t| (1-t)^{2s-1}dt \geq -1,\]
\[h_2'(\beta) = -\frac{\pi^2}{\sin^2(-\pi (\beta-2s))} \leq -\pi^2.\]
This means that $h_2'<h_1'$ in each of these intervals, and therefore they cannot intersect twice in $(2s-1+k,s+k)$, for $k\geq 1$,  thanks to Rolle's theorem.

It only remains to see that the two functions do not intersect in the first interval $(2s-1,s)$.
In that case, however, we are looking for solutions of $(-\Delta)^s_{\R_+}x^\beta=0$ in $\R_+$, with $\beta\in(2s-1,s)$, which we know they do not exist.

\vspace{2mm}

\noindent\emph{Case 3}. Assume $s<\frac12$.
In this case, $\beta_0=0$.
Then, we claim that the functions $h_1$ and $h_2$ intersect exactly once in each of the intervals $(2s-1+k,2s+k)$, $k\in\mathbb N$, $k\geq1$.

Notice first that $h_1$ is continuous, positive, and decreasing in $[0,\infty)$, while $h_2$ is continuous, positive, and decreasing in $(2s-1+k,s+k)$, $k\in \mathbb N$, with $h_2\to+\infty$ as $\beta\downarrow 2s-1+k$ and $h_2(s+k)=0$.
Thus, these two functions intersect at least once in each of the intervals $(2s-1+k,s+k)$, $k\geq1$, and they never intersect in $[s+k,2s+k]$ (since $h_2\leq0$ therein).

To see that they cannot intersect twice in one of these intervals, notice that for $\beta\in (k+s,k+2s)$ the two functions cannot intersect ---since $h_2$ is negative therein---, while for $\beta\in (2s-1+k,k)$, $k\geq1$, they cannot intersect either ---since $h_1<h_1(0)=h_2(0)=h_2(k)<h_2$ therein.

Instead, for $\beta\in (k,k+s)$, $k\geq1$, we have\footnote{This can be seen by using that $t^{1-2s}<1$ and that, since $|\log t|$ is decreasing and $(1-t)^{2s-1}$ is increasing, then  $\int_0^1 |\log t| (1-t)^{2s-1}dt \leq \int_0^1 |\log t|dt \int_0^1(1-t)^{2s-1}dt=\frac{1}{2s}$.}
\[h_1'(\beta) \geq h_1'(1) = -\int_0^1 t^{1-2s}|\log t| (1-t)^{2s-1}dt \geq -\frac{1}{2s}, \]
\[h_2'(\beta) \leq h_2'(1) = -\frac{\pi^2}{\sin^2(\pi(1-2s))} \leq -\frac{1}{s^2}.\]
This means that in such intervals we have $h_1'>h_2'$, and thus the functions cannot intersect twice in $ (k,k+s)$ for all $k\geq1$ by Rolle's theorem.

Notice also that we have showed that $\beta_k\in (k,k+s)$ for all $k\geq1$.
Moreover, since $h_1$ is decreasing and $h_2$ is periodic, then $\beta_{k+1}>1+\beta_k$.

Finally, notice that in the interval $(0,2s)$ we know that there are no solutions to $(-\Delta)^s_{\R_+}x^\beta=0$ in $\R_+$ for $\beta\in(0,2s)$, and we are done.
\end{proof}

\section{Liouville theorems on the Half-space} 
\label{sec3}

 \subsection{Solutions with growth larger than $2s$}
  We denote $\R^N_+=\{x=(x',x_N)\in \R^{N-1}\times \R\,:\ x_N>0\}$ and  $B_r'(z)$  the  ball in $\R^{N-1}$ with radius $r$ centred at $z$, while $B_r^+:=B_r\cap \R^N_+$.
\begin{definition}\footnote{This definition,  is slightly different from Defnition \ref{def-sols-growth-AR}. Here, we ask more regularity on the reminder $\cR_R$ to simplify the proofs.}
Let $U$ be an open subset of $\R^N$ with $0\in \ov U$.
Let $u\in H^s(U)$,  $f\in L^1_{loc}(U)$ and $\a<1$.  We say that $u$ is a  solution to $\Ds_{ {\R^N_+}} u \stackrel{\a}{=}f$ in $U$ if
for all $R\geq 1,$ there exist a constant $c_R\in \R$ and a function  $\cR_R\in  C^{0,1}(B_{R/2})$,  
such that 
\be \label{eq:def-a-sol}
\Ds_{ {\R^N_+}} (\chi_R u)=f+  c_R+ \cR_R  \qquad\textrm{ in  $U\cap B_{R/2}^+$} ,
\ee
with  $\cR_R(0)=0$
and
$
\|\n\cR_R\|_{L^\infty(B_{R/2})}\leq C_0 R^{\a-1}.
$
\end{definition} 
\begin{remark}\label{rem-gen-to-ok}
\begin{enumerate}
\item[(i)]We observe that  if   $\Ds_{ {\R^N_+}} u \stackrel{\a}{=}0$ in $U$  and $|u(x)|\leq C(1+|x|^{2s-\e})$, with $\e>0$, then   $\Ds_{ {\R^N_+}} u {=}0$ in $U$.
\item[(ii)]  If $u$ satisfies \eqref{eq:def-a-sol} and $|u(x)|\leq C_0(1+|x|^{2s+\a})$. Then for any open set $U'\subset U$ with $U'\cap B_{1}^+\not=\emptyset$,   there exists $C=C(N,s, \a,U')$ such that 
\be \label{eq:est-cR}
|c_R|\leq C\left(  C_0  R^\a+ \|f\|_{L ^1(U')} \right).
\ee
Indeed,  consider $\phi\in C^\infty_c( U'\cap B_{1}^+)$ with $\int_{\R^N}\phi\, dx=1$. Then multiply   \eqref{eq:def-a-sol} by $\phi $ and integrate over $\R^N_+$ to get 
\begin{align*}
c_R= \int_{\R^N}\chi_R u \Ds_{\R^N_+}\phi\, dx-\int_{U'} \phi f\, dx-\int_{U'} \cR_R \phi\, dx.
\end{align*}
Since $ |\Ds_{\R^N_+}\phi(x)|\leq C(1+|x|)^{-N-2s}$, we get  \eqref{eq:est-cR}.
\end{enumerate}

\end{remark}

  The following result is  contained in  \cite{Fall-regional}.
 
\begin{lemma}\label{lem:fr-to-reg}
Let   $v\in   H^s(B_2^+)\cap L^\infty(\R^N) $ be a solution to
 \begin{equation}
\label{eq:flat-eq-good-v}
\Ds_{ {\R^N_+}} v=g\quad\text{in}\quad B_{2}^+,
	\end{equation}
with zero  Dirichlet or Neumann boundary conditions on $\de\R^N_+ \cap B_2$.
	Then  
	$$
	\|v\|_{C^{\b}(\ov {B_{1}^+})}\leq C(N,s,\b)( \|v\|_{L^\infty(\R^N)}+ \|g\|_{ L^\infty(\R^N) }   ),
$$
with $\b=2s-1$ if $2s>1$ and $\b\in (0,2s)$ if $2s\leq 1$.
If moreover  $g\in C^{k}(B_2)$,  $k=1,2$, then 
 $$
\|\n _{x'}^k v\|_{L^{\infty } (B_{1}^+)}\leq C(N,s)    \left( \| v\|_{L^\infty(\R^N)}+\|   g\|_{C^k(B_2)}   \right).
 $$
\end{lemma}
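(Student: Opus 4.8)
## Proof proposal for Lemma \ref{lem:fr-to-reg}

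The plan is to reduce everything to known interior and boundary Schauder-type estimates for the extension problem, combined with a bootstrap in the tangential directions. First I would recall from \cite{Fall-regional} (or re-derive via the Caffarelli--Silvestre extension, as in the 1D discussion of Section \ref{sec2}) that the equation $\Ds_{\R^N_+} v = g$ in $B_2^+$ with zero Dirichlet or Neumann data translates into a degenerate-elliptic problem $\mathrm{div}(t^{1-2s}\nabla \tilde v)=0$ in the upper half-ball of $\R^{N+1}_+$, with a Neumann-type co-normal condition $-\lim_{t\to 0}t^{1-2s}\partial_t \tilde v = \overline\kappa_s g$ on the relevant part of $\{t=0\}$, together with $\tilde v = 0$ (Dirichlet case) or the natural reflection (Neumann case) on the complementary part. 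The $L^\infty$ bound $\|v\|_{C^\beta(\overline{B_1^+})}\le C(\|v\|_{L^\infty(\R^N)}+\|g\|_{L^\infty(\R^N)})$ with $\beta = 2s-1$ when $2s>1$ and $\beta\in(0,2s)$ when $2s\le 1$ is precisely the regularity-up-to-the-boundary statement established there; I would simply cite it, since the exponent $\beta$ is dictated by the homogeneity $\beta_0 = \max\{2s-1,0\}$ of the model solutions classified in Lemma \ref{eq:lem-eigen-dec}. (For $2s\le 1$ one loses an $\varepsilon$ because $\beta_0=0$ and the model solution is merely bounded.)

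For the second part, the key point is that the regional operator $\Ds_{\R^N_+}$ commutes with tangential derivatives $\partial_{x_i}$, $i=1,\dots,N-1$, because the domain $\R^N_+$ and the kernel $|x-y|^{-N-2s}$ are both invariant under translations in the $x'$ directions. Hence, formally, $\Ds_{\R^N_+}(\partial_{x_i} v) = \partial_{x_i} g$ in $B_2^+$, with the same (zero Dirichlet or Neumann) boundary conditions preserved under tangential differentiation. Then I would apply the first part of the lemma to $w := \partial_{x_i} v$: since $g\in C^1(B_2)$ gives $\partial_{x_i}g \in L^\infty$, we obtain $\partial_{x_i}v \in C^\beta(\overline{B_{3/2}^+})\subset L^\infty(B_{3/2}^+)$ with the stated bound, and iterating once more (using $g\in C^2(B_2)$, so $\partial_{x_i}\partial_{x_j} g\in L^\infty$) yields $\|\nabla_{x'}^k v\|_{L^\infty(B_1^+)}\le C(\|v\|_{L^\infty(\R^N)}+\|g\|_{C^k(B_2)})$ for $k=1,2$. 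A standard covering/scaling argument handles the shrinkage of the ball from $B_2$ down to $B_1$.

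The main obstacle — and the step requiring genuine care rather than citation — is justifying the commutation $\Ds_{\R^N_+}(\partial_{x_i} v)=\partial_{x_i} g$ rigorously at the level of weak solutions, i.e., checking that the tangential difference quotients $\tau_h^{x_i} v := \big(v(\cdot + h e_i)-v(\cdot)\big)/h$ satisfy the equation with right-hand side $\tau_h^{x_i} g$ on a slightly smaller half-ball, are uniformly bounded in the relevant norm, and converge. This is a translation-invariance argument: one tests the weak formulation against $\tau_{-h}^{x_i}\varphi$, uses $D_{\R^N_+}(v, \tau_{-h}^{x_i}\varphi) = -D_{\R^N_+}(\tau_h^{x_i} v, \varphi)$, and checks that the zero boundary condition (Dirichlet or Neumann) is stable under the translation, which it is precisely because $e_i$ is tangent to $\partial\R^N_+$. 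One must also verify that $v$ and $g$ have enough regularity/integrability for the difference quotients to make sense near the flat boundary; here the first part of the lemma (Hölder continuity up to the boundary) provides the needed a priori control, so the bootstrap is set up so that each step feeds the next. I would present this commutation lemma as the technical heart of the argument and then deduce the $C^k$ estimates by the iteration described above.
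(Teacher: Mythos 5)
The paper gives no proof of this lemma at all: it is stated with the sentence ``The following result is contained in \cite{Fall-regional}'' and used as a black box. So for the first half your route coincides exactly with the paper's (a citation), and for the second half you are supplying an argument the paper never writes down; the tangential translation-invariance idea is indeed the natural mechanism behind such estimates.

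However, as written your derivation of the gradient bounds has a genuine gap. You propose to ``apply the first part of the lemma to $w:=\partial_{x_i}v$'', but the first part requires $\|w\|_{L^\infty(\R^N)}$ on the right-hand side, i.e.\ you would need a global $L^\infty$ bound on $\partial_{x_i}v$ --- which is precisely what you are trying to prove, and which is not even meaningful outside $B_2^+$ where $v$ is merely bounded. The difference-quotient version does not repair this by itself: from the first part you only know $v\in C^\beta$ near the boundary with $\beta\leq 2s-1<1$ (or $\beta<2s\leq 1$), so the quotient $\tau_h v=(v(\cdot+he_i)-v(\cdot))/h$ is of size $|h|^{\beta-1}$ and is \emph{not} uniformly bounded as $h\to0$; likewise the energy identity $D_{\R^N_+}(\tau_h v,\varphi)=\int \tau_h g\,\varphi$ only controls $\tau_h v$ in norms that degenerate like $|h|^{s-1}$. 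What is actually needed is an incremental-quotient iteration: work with quotients normalized by $|h|^{\beta}$, localize with a cutoff and shift the translation onto the kernel (a discrete integration by parts) so that the nonlocal tail of the truncated function contributes only $O(|h|^{1-\beta}\|v\|_{L^\infty})$ to the right-hand side, then reapply the Hölder estimate to gain $\beta$ (or $s$) at each step, taking care when the accumulated exponent crosses $1$ (second-order differences). None of this machinery appears in your sketch, and the sentence asserting that the Hölder continuity ``provides the needed a priori control'' is where the argument currently breaks. Since the paper itself attributes the whole statement to \cite{Fall-regional}, the simplest correct move is to cite that reference for both parts; otherwise the iteration just described must be carried out in full.
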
 

\subsection{Liouville theorem} 

     \begin{lemma}\label{lem:from-ND-1D}
 	Let   $u\in H^s_{loc}(\ov{\R^N_+})$ satisfy,  for $\a<1$,     with $2s+\a\not=1$ and $2s+\a<2$,  
$$
|u(x)|\leq C(1+|x|)^{2s+\a}
$$
and
$$
\Ds_{ {\R^N_+}}  u\stackrel{\a}{=}  0  \qquad\textrm{ in } \R^N_+,
$$
with zero  Dirichlet or Neumann  boundary conditions on $\de \R^N_+$.
Then  $u(x',x_N)= a\cdot x'+b(x_N)$ in the Neumann case, while  in the Dirichlet case $u(x',x_N)=  a\cdot x' x_N^{2s-1}+\o(x_N)$. Here $a\in \R^{N-1}$ is a constant and $\o$ solves     
$$
\Ds_{ {\R_+}} \o \stackrel{\a}{=}    0 \qquad\textrm{ in } \R_+.
 $$
 \end{lemma}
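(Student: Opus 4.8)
The plan is to reduce the $N$-dimensional Liouville statement to the $1$-dimensional one by differentiating in the tangential directions $x'$ and exploiting the low growth rate $2s+\alpha<2$. First I would note that, since $u$ has growth at most $2s+\alpha<2$, any second-order tangential incremental quotient
\[
u_h(x):=\frac{u(x'+he_1',x_N)+u(x'-he_1',x_N)-2u(x',x_N)}{|h|^{1+\varepsilon}},\qquad \varepsilon:=2s+\alpha-1\in(-1,1),
\]
has growth strictly less than $2s$ (one power of $|x'|$ is absorbed by the translation, and the normalization removes $|h|^{1+\varepsilon}$), and still satisfies $\Ds_{\R^N_+}u_h\stackrel{\alpha}{=}0$ with the same boundary condition, because the regional fractional Laplacian and the boundary conditions are invariant under tangential translations. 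By Remark~\ref{rem-gen-to-ok}(i), the relation $\stackrel{\alpha}{=}0$ together with growth $<2s$ upgrades to $\Ds_{\R^N_+}u_h=0$ in the classical sense. Applying the interior-plus-boundary estimate of Lemma~\ref{lem:fr-to-reg} on balls $B_R^+$, rescaled, gives $\|u_h\|_{L^\infty(B_1^+)}\le CR^{-\varepsilon'}$ for a suitable $\varepsilon'>0$ after the rescaling, hence letting $R\to\infty$ forces $u_h\equiv 0$. Therefore $u$ is affine in each tangential variable separately; combined with the growth bound this yields $u(x',x_N)=a(x_N)\cdot x'+b(x_N)$ for some $\R^{N-1}$-valued function $a$ and scalar function $b$ of $x_N$ alone.

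Next I would determine the $x_N$-dependence of the coefficients. Plugging $u(x',x_N)=a(x_N)\cdot x'+b(x_N)$ into the equation and using that $\Ds_{\R^N_+}$ acts only through the $y$-variable, the tangential integrals involving $x'$ produce, by oddness of $(x'-y')$ over the ball-symmetric kernel, exactly the $1$-dimensional operator $\Ds_{\R_+}$ acting on the profiles: one gets $\big(\Ds_{\R_+}a_j\big)(x_N)=0$ and $\big(\Ds_{\R_+}b\big)(x_N)=0$ in the appropriate generalized sense, each with the inherited boundary condition. In the Neumann case the admissible $1$D solutions of growth $<1$ are the constants (by the $1$D classification, $\beta_0=\max\{2s-1,0\}$ and in the Neumann case $a_0=0$, and $\beta_1=2s+\alpha_s>1$), so $a_j$ is constant, i.e.\ $u(x',x_N)=a\cdot x'+b(x_N)$ with $\Ds_{\R_+}b\stackrel{\alpha}{=}0$. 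In the Dirichlet case, the $1$D solutions of growth $<1$ vanishing at the boundary are multiples of $x_N^{2s-1}$ (since $2s-1<1$ and the next exponent $\beta_1=2s+\alpha_s>1$ exceeds the growth), so $a_j(x_N)=a_j x_N^{2s-1}$, giving $u(x',x_N)=a\cdot x'\,x_N^{2s-1}+\omega(x_N)$ with $\Ds_{\R_+}\omega\stackrel{\alpha}{=}0$; this uses the hypothesis $2s+\alpha<2$ so that the cross-term growth $1+(2s-1)=2s<2s+\alpha$ is consistent.

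The main obstacle I expect is making the reduction step fully rigorous at the level of the generalized equation $\stackrel{\alpha}{=}$: one must check that forming tangential second differences commutes with the definition involving the cutoffs $\chi_R$, the constants $c_R$ and the Lipschitz remainders $\cR_R$, and that after differentiating twice in $x'$ the remainder indeed has the decay needed to invoke Remark~\ref{rem-gen-to-ok}(i); this requires care because $\chi_R u$ is not translation invariant, so one works instead with $\chi_{2R}$ times the translates and controls the error supported in the annulus $B_{2R}\setminus B_R$ using the growth bound on $u$. A secondary technical point is justifying the ``integrate out $x'$'' computation that produces $\Ds_{\R_+}$ from $\Ds_{\R^N_+}$ applied to $a(x_N)\cdot x'$: this is an elementary but slightly delicate Fubini-type argument, using the symmetry of the kernel in $y'$ to kill the linear-in-$x'$ contribution of the $y'$-integration and reduce the $y'$-integral of $|x-y|^{-N-2s}$ to the $1$D kernel $c_{1,s}|x_N-y_N|^{-1-2s}$ by the standard slicing identity for the Riesz kernel.
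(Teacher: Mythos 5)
There is a genuine gap at the very first step. You claim that the second-order tangential quotient $u_h(x)=\big(u(x'+he_1',x_N)+u(x'-he_1',x_N)-2u(x)\big)/|h|^{1+\varepsilon}$ ``has growth strictly less than $2s$'' because ``one power of $|x'|$ is absorbed by the translation''. This does not follow from the hypothesis $|u(x)|\leq C(1+|x|)^{2s+\alpha}$ alone: for a fixed $h$, the only bound available a priori is $|u_h(x)|\leq C(1+|x|)^{2s+\alpha}|h|^{-(1+\varepsilon)}$, i.e.\ the same growth $2s+\alpha$ in $x$ (second differences lower the degree only for polynomials, or for functions already known to be smooth at every scale). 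Consequently you cannot invoke Remark~\ref{rem-gen-to-ok}(i) to upgrade $u_h$ to a classical solution, and the later claim $\|u_h\|_{L^\infty(B_1^+)}\leq CR^{-\varepsilon'}$ is also unsubstantiated: the boundary estimate of Lemma~\ref{lem:fr-to-reg} only reaches the exponent $2s-1$ (resp.\ $\beta<2s$), which need not beat $2s+\alpha$, and the $\nabla^2_{x'}$ estimate there requires a $C^2$ right-hand side, whereas the remainder $\cR_R$ in the generalized equation is only Lipschitz. The gain in growth must itself be extracted from the equation by a scaling argument \emph{before} any differencing, so as written your steps are in a circular order.

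The paper's proof fixes exactly this ordering: it first applies Lemma~\ref{lem:fr-to-reg} (the $k=1$ part) to the truncated, rescaled equation at every radius $R$, obtaining the scale-invariant bound $|\nabla_{x'}u(x)|\leq C(1+|x|)^{2s+\alpha-1}$; then it differentiates the equation once in $x'$, observing that both $\nabla_{x'}\cR_R$ and the cutoff commutator $(-\Delta)^s_{\R^N_+}(u\,\nabla_{x'}\chi_R)$ are $O(R^{\alpha-1})$ in $B_{R/2}^+$, so letting $R\to\infty$ shows that $\nabla_{x'}u$ is a genuine global solution with the same boundary condition and growth $2s+\alpha-1<2s$ (here $\alpha<1$ is used); finally the Liouville theorems of \cite{Fall-regional} for sub-$2s$ growth classify $\nabla_{x'}u$ as a constant vector (Neumann) or a multiple of $x_N^{2s-1}$ (Dirichlet), and integration in $x'$ gives the stated form, with the profile $\omega$ solving the one-dimensional generalized equation. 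Note also that this route bypasses your second stage entirely: you would still have to justify the ``integrate out $x'$'' slicing for the linearly growing function $a(x_N)\cdot x'$ (which is not absolutely integrable against the regional kernel and must be handled through the $c_R+\cR_R$ formulation) and then separately classify the 1D profiles $a_j$; the paper gets that classification for free from the Liouville theorem applied to $\nabla_{x'}u$. Your overall strategy (tangential reduction to 1D) is the right one, but to make it work you should adopt the paper's order: scaled gradient estimate first, then one tangential derivative of the equation, then Liouville.
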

 \begin{proof}
 By definition, we have 
\be\label{eq:us-to-diff} 
\Ds_{ {\R^N_+}} ( \chi_R u)= c_R+   \cR_R  \qquad\textrm{ in  $B_{R/2}^+$}.  
\ee
Next,  let  $v(x)=(\chi_Ru)(Mx )$,  for $M= R/2$.   Then
 $$
\Ds_{ {\R^N_+}}  v=  M^{2s}  c_R +M^{2s}  \cR_R(M\cdot) \qquad\textrm{ in $B_1^+$} .
 $$
 Now by Lemma \ref{lem:fr-to-reg},  
 $$
\| \n _{x'} v\|_{L^{\infty}(B_{1/2})}\leq C(N,s)  \left( \| v\|_{L^\infty(\R^N)}+M^{2s}|c_R|+\| M^{2s}  \n _{x'}\cR_R(Mx)\|_{L^\infty(B_1)} \right).
 $$
  Hence, recalling the definition of $\cR_R$, and Remark \ref{rem-gen-to-ok}$(ii)$, we get
 $$
R\|\n _{x'}( \chi_R u) \|_{L^{\infty}(B_{R/4})}\leq C(N,s)   R^{2s+\a}.
 $$
 Since  $|u(x)\n_{x'} \chi_R(x)|\leq C R^{2s+\a-1}$,  we deduce that, for all $R>1$,
$$
\|\n _{x'}  u \|_{L^{\infty}(B_{R/4})}\leq C(N,s)   R^{2s+\a-1}.
$$
As a consequence 
\be \label{eq:grow-nab-tan}
|\n _{x'}  u (x)|    \leq C(N,s)  (1+ |x|^{2s+\a-1}) \qquad\textrm{ for all $x\in \R^N_+$.}
\ee
Differentiating \eqref{eq:us-to-diff} in $x'$ and using Lemma \ref{lem:fr-to-reg},  we obtain   $\n_{x' }u\in H^s_{loc}(\ov{\R^N_+})$ and 
\be\label{eq:us-to-diff-tan} 
\Ds_{ {\R^N_+}}  (\chi_R \n_{x' }u)= -\Ds_{ {\R^N_+}}( u \n_{x'}\chi_R)+  \n_{x'} \cR_R  \qquad\textrm{ in  $B_{R/2}^+$.} 
\ee
Thanks to \eqref{eq:grow-nab-tan} and the fact that $ | \n_{x'} \cR_R|+|\Ds_{ {\R^N_+}}( u \n_{x'}\chi_R)|\leq C R^{\a-1}$ in  $B_{R/2}^+$, we can send $R\to \infty$ to deduce that
 $$
 \Ds_{ {\R^N_+}}    \n_{x' }u= 0 \qquad\textrm{ in  $\R^N_+$.}
 $$
 with  $\n_{x' }u \in H^s_{loc}(\ov{\R^N_+})$ satisfies the same boundary condition as $u$. 
In view of \eqref{eq:grow-nab-tan} and since $2s+\a-1<2s$,   by applying the Liouville theorems in \cite{Fall-regional}, we deduce that   $\n_{x'}u(x',x_N)$ is a constant vector on $\R^N_+$ in the Neumann case and is proportional  to $x_N^{\max(2s-1,0)}$ in the Dirichlet case.    
 \end{proof}
 
\begin{lemma}[1D Liouville theorem]\label{eq:1D-L-th}
Let $s\in(0,1)$, and let $\alpha_s$ be given by Lemma~\ref{analysis-beta}.

Let   $\o\in H^s_{loc}(\ov{\R_+})$ satisfy  
\be \label{eq:growth-omega}
\qquad\qquad |\o(x)|\leq C(1+|x|)^{2s+\a},\qquad \textrm{with}\quad \alpha<\alpha_s,
\ee
and
$$
\Ds_{ {\R_+}}   \o \stackrel{\a}{=}    0 \qquad\textrm{ in } \R_+,
 $$
 with zero Dirichlet or zero Neumann at $\de \R_+$. 
 Then, $\o(x)= c_1+c_2 x^{2s-1}$, for some $c_1,c_2\in \R$.
 \end{lemma}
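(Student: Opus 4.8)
The plan is to reduce both the Dirichlet and the Neumann case to the Caffarelli--Silvestre extension problem \eqref{eq:initi2}, classify the resulting function via the eigenfunction expansion of Lemma~\ref{eq:lem-eigen-dec}, and then annihilate all the high modes using the growth bound together with Lemma~\ref{analysis-beta}. First I would set $c_0:=\omega(0)$ --- so $c_0=0$ in the Dirichlet case, since there $\omega\in H^s_0$ --- and $v:=(\omega-c_0)\mathbf 1_{\R_+}$, which is continuous on $\R$, vanishes at the origin and on $\R_-$, and satisfies $|v(x)|\le C(1+|x|)^{\gamma}$ with $\gamma:=\max\{0,2s+\alpha\}<2s+\alpha_s$. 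As at the beginning of Section~\ref{sec2}, the identity relating $(-\Delta)^sv$ to $(-\Delta)^s_{\R_+}\omega$, combined with $(-\Delta)^s_{\R_+}\omega\stackrel{\alpha}{=}0$ and with the fact that $c_0(-\Delta)^s_{\R_+}\chi_R$ is smooth and $O(R^{-2s})$ on $(0,R/2)$, shows that for every $R\ge1$
\[
(-\Delta)^s(\chi_Rv)=\tfrac{a_s}{x^{2s}}\,v+\widehat c_R+\widehat{\mathcal R}_R\qquad\textrm{in }(0,R/2),
\]
with $\widehat c_R\in\R$, $\widehat{\mathcal R}_R\in C^{0,1}$, $\widehat{\mathcal R}_R(0)=0$, $\|\nabla\widehat{\mathcal R}_R\|_{L^\infty(-R/2,R/2)}\le CR^{\alpha-1}$ and $|\widehat c_R|\le CR^{\max\{0,\alpha\}}$ (by the argument of Remark~\ref{rem-gen-to-ok}(ii), with the test function supported away from the origin). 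Passing to a polynomial-growth Caffarelli--Silvestre extension $w$ of $v$ in $\R^2_+$, the function $w$ lies in $H^1_{\loc}(\overline{\R^2_+};t^{1-2s}dxdt)\cap C(\overline{\R^2_+})$, satisfies $|w(x,t)|\le C(1+x^2+t^2)^{\gamma/2}$, and for each $R$ solves $\div(t^{1-2s}\nabla w)=0$ in $B^+_{R/2}$, $w=0$ on $\{t=0,\ -R/2<x\le0\}$, and $-\lim_{t\to0}t^{1-2s}\partial_tw=\ov\k_s\big(\tfrac{a_s}{x^{2s}}w+\widehat c_R+\widehat{\mathcal R}_R\big)$ on $\{t=0,\ 0<x<R/2\}$.

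The heart of the argument, and the step I expect to be the hardest, is to remove the remainder $\widehat c_R+\widehat{\mathcal R}_R$, i.e.\ to upgrade the last relations to the \emph{exact} problem \eqref{eq:eq-w-g} on every half-ball. When $\alpha<0$ this is immediate: then $\gamma<2s$, so $(-\Delta)^s_{\R_+}\omega=0$ holds classically by Remark~\ref{rem-gen-to-ok}(i), and $w$ solves \eqref{eq:eq-w-g} exactly in each $B^+_R$. For $0\le\alpha<\alpha_s$ I would first absorb the constant $\widehat c_R$ by subtracting the explicit particular solution furnished by Lemma~\ref{frac-Lap-powers} --- a degree-$2s$ (and possibly logarithmic) solution of the inhomogeneous extension equation with constant right-hand side --- so that the residual Neumann datum vanishes at the origin and has gradient $O(R^{\alpha-1})$. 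A rescaling/compactness scheme parallel to the proof of Lemma~\ref{lem:from-ND-1D} then finishes: for $w_M(X):=M^{-\gamma}w(MX)$ the a priori bound gives $\|w_M\|_{L^\infty(B_1^+)}\le C$, the rescaled datum is $\tfrac{a_s}{x^{2s}}w_M+g_M$ with $g_M$ uniformly bounded and uniformly Lipschitz on $B^+_{1/2}$, Lemma~\ref{lem:fr-to-reg} provides uniform interior and boundary H\"older estimates, so $w_M\to w_\infty$ along a subsequence; and since a non-zero residual datum, being of positive degree at the origin, would force a homogeneous component of degree $\ge 2s>\beta_0$ (indeed $\ge 2s+1>\gamma$), which is incompatible with the growth rate, the residual datum cannot survive in the limit. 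One concludes that, up to the explicit finite-degree correction, $w$ solves \eqref{eq:eq-w-g} exactly in every $B^+_R$. The genuine difficulty here, absent from the tangential directions treated in Lemma~\ref{lem:from-ND-1D} (where the growth exponent is actually lowered and the constants can simply be discarded), is precisely that the exponent is not lowered, so the $\widehat c_R$'s must be cancelled by an explicit particular solution.

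Once $w$ solves \eqref{eq:eq-w-g} exactly in every $B^+_R$, Lemma~\ref{eq:lem-eigen-dec} gives an expansion $w=\sum_{k\ge0}\omega^{(R)}_kr^{\beta_k}\psi_k(\theta)$ on $B^+_R$; since the $\psi_k$ are orthonormal in $L^2\big((0,\pi);\sin(\theta)^{1-2s}d\theta\big)$, the coefficients are independent of $R$, so $w(r\cos\theta,r\sin\theta)=\sum_{k\ge0}\omega_kr^{\beta_k}\psi_k(\theta)$ holds on all of $\R^2_+$. By Lemma~\ref{analysis-beta}, $\beta_1=2s+\alpha_s>2s+\alpha$ and $\beta_k\ge\beta_1$ for $k\ge1$; projecting onto $\psi_k$ on the circles $\partial B_R$ and letting $R\to\infty$, the bound $|w|\le C(1+r)^{\gamma}$ with $\gamma<\beta_1$ forces $\omega_k=0$ for every $k\ge1$, hence $w=\omega_0\,r^{\beta_0}\psi_0(\theta)$ with $\beta_0=\max\{2s-1,0\}$. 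Restricting to $\{t=0,\ x>0\}$ and adding back $c_0$: if $s>\tfrac12$ then $\beta_0=2s-1>0$, the trace of $r^{\beta_0}\psi_0$ is $\psi_0(0)\,x^{2s-1}$, and $\omega(x)=c_0+\omega_0\psi_0(0)\,x^{2s-1}$, of the asserted form $c_1+c_2x^{2s-1}$; if $s\le\tfrac12$ then $\beta_0=0$ and $\psi_0$ is not constant (a $C^1$ solution of $(\sin(\theta)^{1-2s}\psi_0')'=0$ satisfying the Robin condition at $\theta=0$ cannot be), so $\omega_0\psi_0(\theta)$ has no limit as $r\to0$ unless $\omega_0=0$; thus $v\equiv0$ and $\omega\equiv c_0=c_1$ (with $c_2=0$), consistently with $x^{2s-1}\notin H^s_{\loc}(\overline{\R_+})$ in this range. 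In either case $\omega=c_1+c_2x^{2s-1}$, and in the Dirichlet case $c_0=\omega(0)=0$ gives $c_1=0$. This completes the plan.
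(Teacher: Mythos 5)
Your overall skeleton is the same as the paper's (reduce to $v=(\o-\o(0))1_{\R_+}$, extend, expand in the basis $\psi_k$ of Lemma \ref{eq:lem-eigen-dec}, use the growth bound and $\b_1=2s+\a_s>2s+\a$ to kill all modes except $\b_0$, then restrict to the trace), and your endgame, including the $s\le\frac12$ discussion, is fine. The genuine gap is exactly at the step you call the heart of the argument: passing from $\Ds_{\R_+}\o\stackrel{\a}{=}0$ to the \emph{exact} extension problem \eqref{eq:eq-w-g}. First, your setup is already inconsistent: a single function $w$ has a single Neumann trace, so it cannot satisfy $-\lim_{t\to0}t^{1-2s}\de_t w=\ov\k_s\big(a_sx^{-2s}w+\widehat c_R+\widehat\cR_R\big)$ simultaneously for all $R$ unless the $\widehat c_R+\widehat\cR_R$ all agree, which is not given; and for $\a\ge 0$ one has $\gamma=2s+\a>2s$, so the Poisson extension of $v$ does not converge and the Poisson extensions of the truncations $\chi_Rv$ do not converge either without $R$-dependent correctors --- producing a single polynomial-growth extension with the exact Robin datum is precisely the content of \cite[Lemma 3.3]{AR19}, which the paper simply invokes at this point (the equivalence is already recorded in Definition \ref{def-sols-growth-AR}).

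Second, your proposed mechanism for removing the constants does not work as described. A particular solution with constant right-hand side via Lemma \ref{frac-Lap-powers} would require $\b=2s$, which is resonant: the constant in \eqref{frac-Lap-1D-powers} blows up there, and the logarithmically corrected power $x^{2s}\log x$ has nonzero boundary trace, so it feeds back into the potential term $a_sx^{-2s}w$ in \eqref{eq:initi2} and is therefore not a particular solution of the relevant problem $L_sP=\mathrm{const}$. The natural corrector lives on the extension side: $t^{2s}$ has zero trace, solves ${\rm div}(t^{1-2s}\n\,\cdot)=0$, and has constant flux, so it absorbs $\widehat c_R$ without disturbing anything else; but then the correctors are $R$-dependent (and $\widehat c_R$ may be of size $R^{\a}$), and showing that the corrected truncated extensions converge to a single $w$ of polynomial growth with trace $v$ and exact Neumann datum is again exactly \cite[Lemma 3.3]{AR19}. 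Your closing heuristic (``a non-zero residual datum would force a component of degree $\ge 2s+1>\gamma$, incompatible with the growth'') is not a proof of this convergence; the residual $\widehat\cR_R$ indeed vanishes locally in the limit, but the issue is the existence and identification of the limit extension, which your sketch does not address. The gap is closed simply by citing \cite[Lemma 3.3]{AR19} as the paper does; after that, your expansion, Parseval/growth and trace arguments coincide with the paper's proof.
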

 
 \begin{proof}
 By Lemma \ref{lem:fr-to-reg}, $(\o-\o(0))1_{\R_+}\in H^{s}_{loc}([0,\infty))\cap C ([0,\infty))$.  In addition, we have that
  $$
L_s\left[(\o-\o(0))1_{\R_+}\right]\stackrel{\a}{=}0  \qquad\textrm{ in $\R_+$},
 $$
 where 
$$
 L_s:=\Ds-a_s x^{-2s}. 
$$
By \cite[Lemma 3.3]{AR19}, there exists   $W\in H^1_{loc}(\ov{\R^{2}_+};t^{1-2s}dxdt) \cap C(\ov{\R_2^+})$   such that 
\be \label{eq:polynom-growth}
 |W(x,t)|\leq C (1+(t+|x|)^{K}),
 \ee
 for some constants  $K\in \N$, $C>0$, and 
 \begin{align*}
\begin{cases}
\div(t^{1-2s} \n W)=0& \qquad \textrm{in }\R\times (0, \infty)\\
-t^{1-2s} \de_t W= \ov\k_s a_s x^{-2s} W    & \qquad \textrm{on $\R_+$}\times \lbrace 0 \rbrace\\
W=(\o-\o(0))1_{\R_+} & \qquad \textrm{on } \R \times \lbrace 0 \rbrace.
\end{cases}
\end{align*}  
Thanks to Lemma \ref{eq:lem-eigen-dec},  for all $r>0$, 
$$
W(r\cos\th,  r\sin\th)=\sum_{k=0}^\infty b_k r^{\b_k}\psi_k(\th).
$$ 
By the Parseval identity and \eqref{eq:polynom-growth}, we get 
\begin{align*}
\sum_{k=0}^\infty b_k^2 r^{2\b_k}=\int_{0}^\pi W^2(r\cos\th,  r\sin\th)\sin(\th)^{1-2s}\, d\th\leq C (1+r)^{2K}.
\end{align*}
Hence, letting $r$ to $\infty$ we find that $W(r\cos\th,  r\sin\th)=\sum_{\b_k\leq K} b_k r^{\b_k}\psi_k(\th)$. As a consequence
$$
W(r,  0)=\sum_{\b_k\leq K} b_k r^{\b_k}\psi_k(0).
$$ 
Recall that $2s+\a<\b_1<\b_k $ for all $k>1$. Therefore since  $W(r,  0)=\o(r)-\o(0)$ we then get from \eqref{eq:growth-omega} and the above identity that  $b_k=0$ for all $k>1$, so that 
$\o(r)=\o(0)+ b_0 r^{\b_0}\psi_0(0)$. 
 
 \end{proof}

 Combining Lemma \ref{eq:1D-L-th},  Lemma \ref{lem:from-ND-1D}, and the regularity theory in \cite{Fall-regional, AFR}, where have that $u\in C^{2s-1+\e}(\ov{B_1^+})$ in the zero Neuman case,  we get the

   \begin{theorem}[Liouville theorem: Neumann and Dirichlet]\label{th:Liouville-Neum}
       Let $s\in(0,1)$ and let $\alpha_s\in(0,1-s)$ be given by Lemma~\ref{analysis-beta}.   
 	Let   $u\in H^s_{loc}(\ov{\R^N_+})$ satisfy, 
$$
\qquad \qquad |u(x)|\leq C_0(1+|x|)^{2s+\a},\qquad \textrm{with}\quad \alpha<\alpha_s,
$$
and
$$
\Ds_{{\R^N_+}} u\stackrel{\a}{=}  0.
$$
\begin{enumerate}
\item[(i)] If $u$ satisfies zero Neumann boundary condition on $\de\R^N_+$,  then  $u(x',x_N)=B+A\cdot x'$ for all $x\in \R^N_+$,  for some constants $B\in \R$ and $A\in \R^{N-1}$. 
\item[(ii)] If $s>\frac12$ and $u=0$  on $\de\R^N_+$ then  $u(x)=(B+ A\cdot x') x_N^{2s-1}$ for all $x=(x',x_N)\in \R^N_+$, for some constants $B\in \R$ and $A\in \R^{N-1}$. 
\end{enumerate}
 \end{theorem}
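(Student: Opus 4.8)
The plan is to peel off the tangential variables $x'$ using Lemma~\ref{lem:from-ND-1D}, classify the remaining one-dimensional profile with the $1$D Liouville theorem, and then discard the single non-admissible homogeneity using boundary regularity. First I would check the hypotheses of Lemma~\ref{lem:from-ND-1D}: since $\alpha<\alpha_s<1-s<1$ and, by Lemma~\ref{analysis-beta}, $2s+\alpha<2s+\alpha_s=\beta_1\in(1+\beta_0,1+s)\subset(1,2)$, we have $2s+\alpha<2$ (the borderline case $2s+\alpha=1$ is treated at the end). Applying that lemma, in the Neumann case we obtain $u(x',x_N)=a\cdot x'+b(x_N)$ with $a\in\R^{N-1}$ a constant vector and $b$ a solution of $\Ds_{\R_+}b\stackrel{\alpha}{=}0$ in $\R_+$ with zero Neumann condition, while in the Dirichlet case ($s>\frac12$) we obtain $u(x',x_N)=a\cdot x'\,x_N^{2s-1}+\omega(x_N)$ with $\omega$ a solution of $\Ds_{\R_+}\omega\stackrel{\alpha}{=}0$ in $\R_+$ with zero Dirichlet condition. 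Evaluating these identities at $x'=0$ shows that the profile inherits the growth $|b(x_N)|\le C_0(1+|x_N|)^{2s+\alpha}$ (and similarly for $\omega$).

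Since $\alpha<\alpha_s$, Lemma~\ref{eq:1D-L-th} then applies to the one-dimensional profile and gives $b(x_N)=c_1+c_2\,x_N^{2s-1}$ (resp. $\omega(x_N)=c_1+c_2\,x_N^{2s-1}$) for some constants $c_1,c_2\in\R$; moreover, when $s\le\frac12$ one has $\beta_0=\max\{2s-1,0\}=0$, and in that regime Lemma~\ref{eq:1D-L-th} actually forces the profile to be constant, i.e. $c_2=0$. It remains to discard the non-admissible term in each case. In the Dirichlet case, the boundary condition $u=0$ on $\{x_N=0\}$, together with $0^{2s-1}=0$ (valid since $s>\frac12$), gives $\omega(0)=c_1=0$, so that $u(x)=\big(c_2+a\cdot x'\big)x_N^{2s-1}$: this is the desired form with $B=c_2$ and $A=a$. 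In the Neumann case, I would invoke the boundary regularity for the Neumann problem established in \cite{Fall-regional,AFR}, by which $u\in C^{2s-1+\varepsilon}(\overline{B_1^+})$ for some $\varepsilon>0$; since $a\cdot x'+c_1$ is smooth whereas $x_N^{2s-1}$ lies in $C^{2s-1}$ but not in $C^{2s-1+\varepsilon}$ near $\partial\R^N_+$, we must have $c_2=0$ (already known for $s\le\frac12$), hence $u(x)=c_1+a\cdot x'$ with $B=c_1$ and $A=a$.

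The genuinely delicate point is not in this last chain of deductions but in making sure Lemma~\ref{lem:from-ND-1D} truly produces a \emph{one-dimensional} equation for the profile: this rests on $\Ds_{\R^N_+}(a\cdot x')=0$ (and, after the Dirichlet reduction, $\Ds_{\R^N_+}(a\cdot x'\,x_N^{2s-1})=0$), which holds because the half-space is invariant under the reflections $x'_i\mapsto -x'_i$ and these functions are odd under them, and on the fact that $\Ds_{\R^N_+}$ applied to a function of $x_N$ alone reduces, upon integrating out $x'$, to $\Ds_{\R_+}$ with the correct normalizing constant; both facts are already contained in Lemma~\ref{lem:from-ND-1D}. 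Finally, the excluded value $2s+\alpha=1$ creates no difficulty: one picks $\alpha'\in(\alpha,\alpha_s)$ with $2s+\alpha'\neq1$, notes that $|u(x)|\le C_0(1+|x|)^{2s+\alpha}\le C_0(1+|x|)^{2s+\alpha'}$ and that $\Ds_{\R^N_+}u\stackrel{\alpha}{=}0$ implies $\Ds_{\R^N_+}u\stackrel{\alpha'}{=}0$, and applies the result with $\alpha'$.
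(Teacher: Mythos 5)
Your proof is correct and follows essentially the same route as the paper: reduce to a one-dimensional profile via Lemma \ref{lem:from-ND-1D}, classify it with the 1D Liouville theorem (Lemma \ref{eq:1D-L-th}), and discard the $x_N^{2s-1}$ term in the Neumann case using the $C^{2s-1+\varepsilon}$ boundary regularity from \cite{Fall-regional,AFR}. You merely spell out details the paper leaves implicit (the growth of the 1D profile, the treatment of $2s+\alpha=1$ by passing to $\alpha'\in(\alpha,\alpha_s)$), which is consistent with the paper's argument.
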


 \section{Some estimates and formulas related to the regional fractional laplacian}
\label{sec4} 
 
In this Section we establish some new estimates related to the regional fractional Laplacian.  To alleviate the notations, we assume in this section that $c_{N,s}$, the constant in \eqref{regional},  is equal to 1.
 We start with  the following:
 
\begin{lemma}\label{lem:mapping-prop}
Let $s\in(0,1)$, $\alpha\in(0,1)$ be such that $1<2s+\alpha<2$, $\Omega\subset\R^N$ be a $C^{2s+\alpha}$ domain, and $u\in C^{2s+\alpha}(\overline\Omega)$ satisfying $\partial_\nu u=0$ on $\partial\Omega$.
Then,
\[\|(-\Delta)^s_\Omega u\|_{C^\alpha(\overline\Omega)} \leq C\|u\|_{C^{2s+\alpha}(\overline\Omega)},\]
with $C$ depending only on $N$, $s$, $\alpha$, and $\Omega$.
\end{lemma}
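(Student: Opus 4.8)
The plan is to reduce the global estimate to two localized estimates: an interior one (away from $\partial\Omega$) and a boundary one (near $\partial\Omega$), and then combine them. The interior estimate is standard: for $x$ with $\dist(x,\partial\Omega)=d>0$, one splits $(-\Delta)^s_\Omega u(x)$ into a contribution from $\Omega\cap B_{d/2}(x)$, which is controlled by $[u]_{C^{2s+\alpha}}$ via the usual second-difference cancellation (writing $u(x)-u(y) = \nabla u(x)\cdot(x-y) + O(|x-y|^{2s+\alpha})$ and using antisymmetry of the first-order term over the symmetric ball), and a far contribution from $\Omega\setminus B_{d/2}(x)$, which is controlled by $\|u\|_{L^\infty}$ times $\int_{|z|>d/2}|z|^{-N-2s}dz \sim d^{-2s}$. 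For the $C^\alpha$ seminorm in the interior region one differentiates this decomposition in $x$ (or estimates $(-\Delta)^s_\Omega u(x_1) - (-\Delta)^s_\Omega u(x_2)$ directly), which is again routine.

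The boundary estimate is the substantial part. Near a boundary point $x_0\in\partial\Omega$, I would flatten the boundary by a $C^{2s+\alpha}$ diffeomorphism $\Phi$ mapping a neighborhood of $x_0$ in $\Omega$ onto $B_1^+$, and compare $(-\Delta)^s_\Omega u$ with $(-\Delta)^s_{\R^N_+}(u\circ\Phi^{-1})$. The difference of the two kernels, $|\Phi(x)-\Phi(y)|^{-N-2s} - |x-y|^{-N-2s}$ after change of variables, carries the Jacobian and the $C^{1,2s+\alpha-1}$ regularity of $\Phi$; since the singularity is improved by the smoothness of $\Phi$, this error term maps $C^{2s+\alpha}$ into $C^\alpha$ by an estimate of the same flavor as the interior one (this is exactly the kind of perturbation-of-kernel computation that appears in Schauder theory for integro-differential operators). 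One then needs the model estimate on the half-space: if $w\in C^{2s+\alpha}(\overline{B_1^+})$ with $\partial_\nu w = \partial_{x_N} w = 0$ on $\{x_N=0\}$, then $(-\Delta)^s_{\R^N_+}w\in C^\alpha$ locally. For this, extend $w$ to an even-in-$x_N$ function $\tilde w$ on $\R^N$; because $\partial_{x_N}w$ vanishes on $\{x_N=0\}$, the even extension $\tilde w$ is still $C^{2s+\alpha}$ (the odd-order-in-$x_N$ obstruction is killed by the Neumann condition, using $2s+\alpha<2$ so only the first $x_N$-derivative matters). Then one checks that $(-\Delta)^s_{\R^N_+}w(x)$ differs from $(-\Delta)^s\tilde w(x)$ — the full-space fractional Laplacian, which is known to map $C^{2s+\alpha}\to C^\alpha$ — by a term involving only the "reflected" part of the integral, and that error is again controlled since the reflection map $x_N\mapsto -x_N$ composed with the even symmetry produces a harmless kernel.

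Assembling: cover $\overline\Omega$ by finitely many interior balls and finitely many boundary charts, apply the interior estimate on the former and the flattening-plus-half-space estimate on the latter, and patch the local $C^\alpha$ bounds together with a standard partition-of-unity argument (using that $C^\alpha$ norms glue up to constants depending on the covering, hence on $\Omega$). The constant $C$ then depends only on $N$, $s$, $\alpha$, and the $C^{2s+\alpha}$ character of $\partial\Omega$, as claimed.

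\textbf{Main obstacle.} The delicate point is handling the Neumann condition in the boundary chart: one must verify that the even reflection of $u$ (in the flattened coordinates) remains $C^{2s+\alpha}$, which genuinely uses $\partial_\nu u=0$ and the restriction $2s+\alpha<2$ — without the Neumann condition the reflected function would only be Lipschitz-type across $\{x_N=0\}$ and no $C^\alpha$ estimate on $(-\Delta)^s_\Omega u$ could hold. Everything else (interior estimate, kernel-perturbation error from flattening) is technical but routine Schauder-type bookkeeping.
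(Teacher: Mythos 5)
Your overall strategy (flatten the boundary, extend evenly across $\{x_N=0\}$ using the Neumann condition, and compare with the full-space fractional Laplacian of the extension) is a legitimate alternative route, but the step you dismiss as harmless is precisely where the whole difficulty sits, and as written it is a genuine gap. For $x\in\R^N_+$ one has the identity
\begin{equation*}
(-\Delta)^s_{\R^N_+}w(x)\;=\;(-\Delta)^s\tilde w(x)\;-\;c_{N,s}E(x),\qquad E(x):=\int_{\R^N_-}\frac{\tilde w(x)-\tilde w(\zeta)}{|x-\zeta|^{N+2s}}\,d\zeta ,
\end{equation*}
so claiming that the ``reflected'' term $E$ is controlled in $C^\alpha$ up to $\{x_N=0\}$ is, modulo the classical mapping property of $(-\Delta)^s$ on $C^{2s+\alpha}(\R^N)$, \emph{equivalent} to the flat case of the lemma itself; the identity shuffles the problem around but proves nothing by itself. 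And $E$ is not harmless: its kernel is nearly singular (the singularity sits at distance $x_N$ from the integration domain), a naive bound gives $|\nabla E(x)|\lesssim x_N^{-2s}|\nabla \tilde w(x)|+x_N^{\alpha-1}$, and indeed for a function violating the Neumann condition (e.g.\ $\tilde w(x)=x_N$ near $0$, $N=1$, $s>\frac12$) $E$ is even unbounded, behaving like $x_N^{1-2s}$. To make your reduction work one must exhibit the cancellations explicitly: expanding $\tilde w(x)-\tilde w(\zeta)=\nabla\tilde w(x)\cdot(x-\zeta)+O(|x-\zeta|^{2s+\alpha})$, the dangerous tangential part $x_N^{-2s}\nabla_{x'}\tilde w(x)$ cancels only through an exact identity of the form $\int_{\R^N_-}\big(|x-\zeta|^{-N-2s}-(N+2s)(x_i-\zeta_i)^2|x-\zeta|^{-N-2s-2}\big)d\zeta=0$ (a divergence-theorem computation with zero flux through $\{\zeta_N=0\}$), while the normal part is only saved by $|\partial_N\tilde w(x)|\le Cx_N^{2s+\alpha-1}$, i.e.\ by the Neumann condition again; only then does one get $|\nabla E(x)|\le Cx_N^{\alpha-1}$ and hence the $C^\alpha$ bound. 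This cancellation analysis is the real content, and it is of exactly the same nature as the surface-integral estimate the paper proves directly (the bound \eqref{C2s+alpha-domains}), so it cannot be waved away as kernel bookkeeping.

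Two further points. First, your flattening step is not routine either: the domain is only $C^{2s+\alpha}$, so $D\Phi$ is merely $C^{2s+\alpha-1}$, and showing that the kernel-perturbation operator maps $C^{2s+\alpha}$ into $C^\alpha$ with this borderline regularity requires the same kind of careful splitting (and one must also choose $\Phi$ so that $\partial_{y_N}\Phi$ is parallel to $\nu$ on the boundary, otherwise the Neumann condition does not transform into $\partial_{x_N}w=0$). Second, for comparison, the paper avoids both flattening and reflection: it estimates $(-\Delta)^s_\Omega u(x)-(-\Delta)^s_\Omega u(x')$ directly, differentiating the far-field part of the integral; the moving domain produces a surface integral over $\partial\Omega$, which is then bounded by $C\,d^{\alpha-1}$ using $\partial_\nu u=0$ (to kill the normal first-order term) together with a symmetrization over the $C^{2s+\alpha}$ graph (to kill the tangential one). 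Your mislocated ``main obstacle'' (the regularity of the even extension) is in fact the easy half-page computation; the hard part is the one you labelled harmless.
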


\begin{proof}
It suffices to see that, for any $x'\in B_r(x)$, where $B_{2r}(x)\subset\Omega$, we have
\[\big|(-\Delta)^s_\Omega u(x)-(-\Delta)^s_\Omega u(x')\big|\leq Cr^\alpha;\]
see e.g. \cite[Appendix A]{FR-book}.

For this, we write
\[\begin{split}
(-\Delta)^s_\Omega u(x') & = \int_{(-x'+\Omega)\cap B_r}\frac{u(x')-u(x'+z)}{|z|^{N+2s}}\,dz + \int_{(-x'+\Omega)\setminus B_r}\frac{u(x')-u(x'+z)}{|z|^{N+2s}}\,dz \\
&=: I_1(x')+I_2(x'),
\end{split} \]
for any $x'\in B_r(x)$.
Notice that $r>0$ is fixed, and that $B_r(x')\subset\Omega$ for all such $x'$.
Let us now treat the two terms $I_1$ and $I_2$ separately.

For $I_1$ we have
\[I_1(x')=\int_{B_r}\frac{2u(x')-u(x'+z)-u(x'-z)}{|z|^{N+2s}}\,dz = \int_{B_r} \int_0^1 [\nabla u(x'+tz)-\nabla u(x'-tz)]\cdot z\,\frac{dt\,dz}{|z|^{N+2s}}.\]
Thus, since 
\[\big|\nabla u(x+tz)-\nabla u(x-tz)-\nabla u(x'+tz)+\nabla u(x'-tz)\big|\leq C|z|^{2s+\alpha-1},\]
we deduce that
\[\big|I_1(x)-I_1(x')\big| \leq C\int_{B_r}|z|^{2s+\alpha-1}\frac{|z|dz}{|z|^{N+2s}} \leq Cr^{\alpha}.\]

For $I_2$ we have
\[\nabla I_2(x') = \int_{\Omega\setminus B_r(x')} \frac{\nabla u(x')-\nabla u(y)}{|x'-y|^{N+2s}}dy +
\int_{\partial\Omega}\frac{u(x')-u(y)}{|x'-y|^{N+2s}}\, \nu(y)\,d\sigma(y).\]
The first term can be bounded as follows.
\[\left|\int_{\Omega\setminus B_r(x')} \frac{\nabla u(x')-\nabla u(y)}{|x'-y|^{N+2s}}dy\right|\leq C\int_{\Omega\setminus B_r(x')} \frac{|x'-y|^{2s+\alpha-1}}{|x'-y|^{N+2s}}dy \leq Cr^{\alpha-1}.\]

For the second term in $I_2$ we have notice that, since $\partial_\nu u=0$ on $\partial\Omega$, then
\[\big|u(x')-u(x^*)\big|\leq Cd^{2s+\alpha},\]
where $x^*\in\partial\Omega$ is the projection of $x'$ onto $\partial\Omega$, and $d={\rm dist}(x',\partial\Omega)\geq r$.
Then
\[|u(x')-u(y)|\leq Cd^{2s+\alpha} + |u(x^*)-u(y)|,\]
and therefore 
\[\left|\int_{\partial\Omega}\frac{u(x')-u(y)}{|x'-y|^{N+2s}}\, \nu(y)\,d\sigma(y)\right| \leq Cd^{\alpha-1} + \left|\int_{\partial\Omega}\frac{u(x^*)-u(y)}{|x'-y|^{N+2s}}\, \nu(y)\,d\sigma(y)\right|.
\]
Using that $\partial\Omega$ is locally a $C^{2s+\alpha}$ graph, we will show that 
\begin{equation}\label{C2s+alpha-domains}
\left|\int_{\partial\Omega}\frac{u(x^*)-u(y)}{|x'-y|^{N+2s}}\, \nu(y)\,d\sigma(y)\right|\leq Cd^{\alpha-1},
\end{equation}
and hence, combining the previous inequalities and using that $d>r$, we find
\[|\nabla I_2(x')|\leq Cr^{\alpha-1}.\]
Thus, since this holds for any $x'\in B_r(x)$, we deduce
\[\big|I_2(x)-I_2(x')\big| \leq C|x-x'| r^{\alpha-1} \leq Cr^\alpha,\]
and the result follows.

It only remains to prove \eqref{C2s+alpha-domains}.
Clearly, it is enough to show 
\begin{equation}\label{C2s+alpha-domains-B}
\left|\int_{\partial\Omega\cap B_{\rho_\circ}(x^*)}\frac{u(x^*)-u(y)}{|x'-y|^{N+2s}}\, \nu(y)\,d\sigma(y)\right|\leq Cd^{\alpha-1},
\end{equation}
for some small $\rho_\circ>0$.
For this, notice that 
\[u(y) = u(x^*) + \nabla u(x^*)\cdot (y-x^*) + O\big(|y-x^*|^{2s+\alpha}\big)\]
and
\[\nu(y) = \nu(x^*) + O\big(|y-x^*|^{2s+\alpha-1}\big).\]
Therefore, 
\[\big(u(y)-u(x^*)\big)\nu(y) = \nabla u(x^*)\cdot (y-x^*)\nu(x^*) + O\big(|y-x^*|^{2s+\alpha}\big).\]
Since $\nu(x^*)$ and $\nabla u(x^*)$ are fixed (and bounded), then
\[\left|\int_{\partial\Omega\cap B_{\rho_\circ}(x^*)} \frac{u(x^*)-u(y)}{|x'-y|^{N+2s}}\, \nu(y)\,d\sigma(y)\right|  \leq  C\left|\int_{\partial\Omega\cap B_{\rho_\circ}(x^*)} (y-x^*)\,\frac{d\sigma(y)}{|x'-y|^{N+2s}}\right| +  Cd^{\alpha-1}.
\]
To bound the last integral, let us assume that $x^*=0$ and that $\partial\Omega \cap B_{\rho_\circ}$ can be written as a graph in the $e_N=-\nu(x^*)$ direction,  i.e., $\partial\Omega \cap B_{\rho_\circ}(x^*) = \{x_N = g(x_1,...,x_{N-1})\}$.
We denote $z=(x_1,...,x_{N-1})$, and then
\[\int_{\partial\Omega\cap B_{\rho_\circ}} (y-x^*)\,\frac{d\sigma(y)}{|x'-y|^{N+2s}}
= \int_{B_{\rho_\circ}'} \big(z,g(z)\big)\,\frac{J(z)dz}{\big(|d-g(z)|^2+|z|^2\big)^{\frac{N+2s}{2}}},\]
where $J(z)$ is the Jacobian of the change of variables, and $B_{\rho_\circ}'\subset \R^{N-1}$.

Since $g\in C^{2s+\alpha}$ and $g(0)=|\n g(0)|=0$, then $|g(z)|\leq C|z|^{2s+\alpha}$, and the last component of such integral can be bounded by
\[\int_{B_{\rho_\circ}'} |g(z)|\,\frac{J(z)dz}{\big(|d-g(z)|^2+|z|^2\big)^{\frac{N+2s}{2}}} \leq C\int_{B_{\rho_\circ}'} |z|^{2s+\alpha}\,\frac{dz}{\big(d^2+|z|^2\big)^{\frac{N+2s}{2}}} \leq Cd^{\alpha-1},\]
where we used\footnote{It is easy to see that $|a-b|^2\geq \frac12a^2-b^2$ for all $a,b\in\R$.} that $|d-g(z)|^2+|z|^2 \geq \frac12d^2-|g(z)|^2+|z|^2 \geq \frac12(d^2+|z|^2)$ in $B_{\rho_\circ}$.

To control the first $n-1$ components, we symmetrize the integral, and then we need to bound
\[\frac12\int_{B_{\rho_\circ}'} z\left\{\frac{J(z)}{\big(|d-g(z)|^2+|z|^2\big)^{\frac{N+2s}{2}}} - \frac{J(-z)}{\big(|d-g(-z)|^2+|z|^2\big)^{\frac{N+2s}{2}}} \right\}dz.\]
As before, we have $|d-g(\pm z)|^2+|z|^2 \asymp d^2+|z|^2$, and hence
\[\left|\big(|d-g(z)|^2+|z|^2\big)^{-\frac{N+2s}{2}} - \big(|d-g(-z)|^2+|z|^2\big)^{-\frac{N+2s}{2}}\right| \leq C\big|g(z)-g(-z)\big|\big(d^2+|z|^2\big)^{-\frac{N+2s+1}{2}}.\]
Moreover, 
\[\big|J(z)-J(-z)\big|\leq C|z|^{2s+\alpha-1}\qquad\textrm{and}\qquad \big|g(z)-g(-z)\big|\leq C|z|^{2s+\alpha}.\]
Therefore, 
\[\begin{split}
\left|\int_{B_{\rho_\circ}'} z\,\frac{J(z)dz}{\big(|d-g(z)|^2+|z|^2\big)^{\frac{N+2s}{2}}} \right| & \leq C\int_{B_{\rho_\circ}'} |z|\left\{\frac{|z|^{2s+\alpha}}{\big(d^2+|z|^2\big)^{\frac{N+2s+1}{2}}} + \frac{|z|^{2s+\alpha-1}}{\big(d^2+|z|^2\big)^{\frac{N+2s}{2}}} \right\}dz\\ 
& \leq C\int_{B_{\rho_\circ}'} \frac{|z|^{2s+\alpha}}{\big(d^2+|z|^2\big)^{\frac{N+2s}{2}}} \,dz \leq Cd^{\alpha-1}.
\end{split}\]
Hence, \eqref{C2s+alpha-domains-B} follows, and we are done.
\end{proof}

In case $2s+\alpha<1$ we have the following.
 
 \begin{lemma}\label{lem:2s-a-s-1}
Let $s,\alpha>0$ be such that $2s+\alpha<1$, let $\Omega\subset\R^N$ be any open set, and let $u\in C^{2s+\alpha}(\overline\Omega)$.
Then,
\[\|(-\Delta)^s_\Omega u\|_{C^\alpha(\overline\Omega)} \leq C\|u\|_{C^{2s+\alpha}(\overline\Omega)},\]
with $C$ depending only on $N$, $s$, $\O$ and $\alpha$.
 \end{lemma}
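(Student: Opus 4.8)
The plan is as follows. Since $2s+\alpha<1$ together with $\alpha>0$ forces $2s<1$, the bound $|u(x)-u(y)|\le[u]_{C^{2s+\alpha}(\overline\Omega)}|x-y|^{2s+\alpha}$ makes the kernel $\frac{u(x)-u(y)}{|x-y|^{N+2s}}$ integrable near the diagonal, with integrable majorant $|x-y|^{\alpha-N}$; hence no principal value is needed and
\[
(-\Delta)^s_\Omega u(x)=\int_\Omega\frac{u(x)-u(y)}{|x-y|^{N+2s}}\,dy
\]
is an absolutely convergent integral for every $x\in\Omega$. Splitting this into $\Omega\cap B_1(x)$, where one uses Hölder continuity of $u$, and $\Omega\setminus B_1(x)$, where one uses $|u(x)-u(y)|\le2\|u\|_{L^\infty(\Omega)}$ together with $\int_{\R^N\setminus B_1}|z|^{-N-2s}\,dz<\infty$, gives at once $\|(-\Delta)^s_\Omega u\|_{L^\infty(\Omega)}\le C\|u\|_{C^{2s+\alpha}(\overline\Omega)}$ with $C=C(N,s,\alpha)$.

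For the Hölder seminorm I would fix $x_1,x_2\in\Omega$ and set $\rho:=|x_1-x_2|$; since for $\rho\ge1$ the desired bound follows from the $L^\infty$ estimate (using $\rho^\alpha\ge1$), assume $\rho<1$. With $A:=\Omega\cap B_{4\rho}(x_1)$, decompose
\[
(-\Delta)^s_\Omega u(x_1)-(-\Delta)^s_\Omega u(x_2)=\int_A\frac{u(x_1)-u(y)}{|x_1-y|^{N+2s}}\,dy-\int_A\frac{u(x_2)-u(y)}{|x_2-y|^{N+2s}}\,dy+\int_{\Omega\setminus A}\Big(\frac{u(x_1)-u(y)}{|x_1-y|^{N+2s}}-\frac{u(x_2)-u(y)}{|x_2-y|^{N+2s}}\Big)\,dy.
\]
Since $A\subset B_{5\rho}(x_1)\cap B_{5\rho}(x_2)$, the two \emph{near} integrals are each controlled by Hölder continuity of $u$ via $C[u]_{C^{2s+\alpha}(\overline\Omega)}\int_{B_{5\rho}}|z|^{\alpha-N}\,dz\le C[u]_{C^{2s+\alpha}(\overline\Omega)}\rho^{\alpha}$, the integral being finite because $\alpha>0$.

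The \emph{far} integral is the only delicate point. On $\Omega\setminus A$ one has $|x_1-y|\ge4\rho$, hence $\tfrac34|x_1-y|\le|x_2-y|\le\tfrac54|x_1-y|$, so both distances are $\asymp t:=|x_1-y|$. I would split the integrand as $\frac{u(x_1)-u(x_2)}{|x_1-y|^{N+2s}}+\big(u(x_2)-u(y)\big)\big(|x_1-y|^{-N-2s}-|x_2-y|^{-N-2s}\big)$. The first piece contributes at most $|u(x_1)-u(x_2)|\int_{\R^N\setminus B_{4\rho}}|z|^{-N-2s}\,dz\le C[u]_{C^{2s+\alpha}}\rho^{2s+\alpha}\rho^{-2s}=C[u]_{C^{2s+\alpha}}\rho^{\alpha}$. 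For the second piece, a mean value estimate for $z\mapsto|z|^{-N-2s}$ along the segment from $x_1-y$ to $x_2-y$ (whose points all have length $\asymp t$) gives $\big||x_1-y|^{-N-2s}-|x_2-y|^{-N-2s}\big|\le C\rho\,t^{-N-2s-1}$, and combined with $|u(x_2)-u(y)|\le C[u]_{C^{2s+\alpha}}t^{2s+\alpha}$ this yields a contribution $\le C[u]_{C^{2s+\alpha}}\rho\int_{\R^N\setminus B_{4\rho}}|z|^{\alpha-N-1}\,dz\le C[u]_{C^{2s+\alpha}}\rho^{\alpha}$, the last integral being $\le C\rho^{\alpha-1}$ precisely because $\alpha<1$. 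Collecting the three estimates gives $\big|(-\Delta)^s_\Omega u(x_1)-(-\Delta)^s_\Omega u(x_2)\big|\le C[u]_{C^{2s+\alpha}(\overline\Omega)}|x_1-x_2|^\alpha$, which together with the $L^\infty$ bound proves the lemma. The main obstacle is the exponent bookkeeping in this last step — exploiting the comparability $|x_1-y|\asymp|x_2-y|\asymp t$ in the far region and checking that the competing powers of $\rho$ and $t$, coming from the cancellation in the kernel and from the Hölder decay of $u$, combine to exactly $\rho^\alpha$; note that $\partial\Omega$ plays no role whatsoever, which is why $\Omega$ may be an arbitrary open set.
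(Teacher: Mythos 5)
Your proposal is correct and follows essentially the same route as the paper's proof: an absolute $L^\infty$ bound using the Hölder modulus near the diagonal and boundedness far away, then a Hölder estimate obtained by splitting at scale comparable to $|x_1-x_2|$, estimating the near parts directly, and handling the far part by adding and subtracting $u$ at the two points plus a mean value bound on the kernel difference, with the same exponent bookkeeping ($\rho^{2s+\alpha}\cdot\rho^{-2s}$ and $\rho\cdot\rho^{\alpha-1}$, using $\alpha<1$). The only differences are cosmetic (radius $4\rho$ versus $2|x-x'|$ and the separate treatment of $\rho\ge 1$), so nothing further is needed.
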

 \begin{proof}
 It is easy to see that  
$$
 \|\Ds_{\O} u\|_{L^{\infty}(\O)}\leq \sup_{x\in \O}\int_{\O}\frac{|u(x)-u(y)|}{|x-y|^{N+2s}}\, dy\leq  C\|u\|_{C^{2s+\a}(\ov \O) } .
$$
 Now, for $x,x'\in \O$ let  $r=2{|x-x'|}$.  We have 
 \begin{align*}
&J(x,x'):= \Ds_{\O} u(x)- \Ds_{\O} u(x')=\int_{|x-y|<r}\frac{u(x)-u(y)}{|x-y|^{N+2s}}\, dy- \int_{|x-y|<r}\frac{u(x')-u(y)}{|x'-y|^{N+2s}}\, dy\\
 &+(u(x)-u(x'))\int_{|x-y|>r}\frac{1}{|x'-y|^{N+2s}}\, dy\\
 &- \int_{|x-y|>r}(u(x)-u(y))\left(\frac{1}{|x-y|^{N+2s}}  - \frac{1}{|x'-y|^{N+2s}} \right)\, dy.
 \end{align*}
 We observe that if $|y-x|<r$, then $|y-x'|\leq 3r/2$ and  if $|y-x|>r$, then $|y-x'|\geq r/2$.  Moreover  for all $t\in (0,1)$, if $|x-y|\geq r$, then $|tx+(1-t)x'-y|\geq |y-x|/2$.
 We then have 
 \begin{align*}
 &|J(x,x')|\leq C r^{\a}\\
 &+ C r^{2s+\a}\int_{|x'-y|>r/2} |x'-y|^{-N-2s}\, dy+ C \int_0^1\int_{|x-y|\geq r}\frac{|x-x'|  |x-y|^{2s+\a}dy}{ |tx+(1-t)x'-y|^{N+2s+1}}dt\\
 &\leq C r^{\a}+C |x-x'|   \int_{|x-y|\geq r}|x-y|^{2s+\a} |x-y|^{-N-2s-1}\leq C r^\a.
 \end{align*}
 The proof is complete.
 \end{proof}


Finally, we provide a new formula for the regional fractional Laplacian on domains.
This can be used to give a different proof of Lemma \ref{lem:mapping-prop} in case $s>\frac12$.

 \begin{lemma}\label{lem:rew-reg-op}
 Let $\O$ be a bounded $C^{\max(2s,1)}$ domain.  
 Let $u\in C^{2s+\a}(\ov{\O})$, with $2s+\a>1$ and $\de_{\nu} u=0$ on $\de\O$.  
 Then,  for  $x\in \ov\O$ we have
 \begin{align*}
\Ds_{\O}u(x)&=\int_{{\O}}\frac{u(x)-u(y)- \n u (x)\cdot (x-y) }{|x-y|^{N+2s}}\, dy \,+\\
&\hspace{45mm}+\frac{1}{N+2s-2}\int_{\de\O}\frac{\big(\n u(x)-\n u(y) \big)\cdot\nu(y)}{|x-y|^{N+2s-2}}\,d\s(y).
 \end{align*}
 where $\nu$ is the unit exterior normal of $\O$. Moreover, all the above integrals converge absolutely.
 \end{lemma}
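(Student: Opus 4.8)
The plan is to prove the identity by ``restoring'' the missing first--order term via the divergence theorem, and to verify absolute convergence by hand. Fix $x\in\ov\O$. Since $u\in C^{2s+\a}(\ov\O)$ with $2s+\a>1$, Taylor's formula gives $|u(x)-u(y)-\n u(x)\cdot(x-y)|\le C|x-y|^{2s+\a}$ when $2s+\a\le 2$ (and an even better bound if $u\in C^2$), so the integrand of the first integral on the right--hand side is $O(|x-y|^{\a-N})$, hence in $L^1(\O)$ because $\O$ is bounded and $\a>0$; this proves that integral converges absolutely. For the boundary integral, let $x^\ast\in\de\O$ be a nearest point to $x$ and $d=\dist(x,\de\O)$, and write $\nu(y)=\nu(x^\ast)+(\nu(y)-\nu(x^\ast))$ to obtain
\[
(\n u(x)-\n u(y))\cdot\nu(y)=(\n u(x)-\n u(x^\ast))\cdot\nu(y)+(\n u(x^\ast)-\n u(y))\cdot\nu(x^\ast)+(\n u(x^\ast)-\n u(y))\cdot(\nu(y)-\nu(x^\ast)).
\]
Using $\|\n u\|_{C^{2s+\a-1}(\ov\O)}<\infty$ (which is exactly where $2s+\a>1$ enters) and that $\nu$ is continuous, the right--hand side is $\le C\big(d^{2s+\a-1}+|x^\ast-y|^{2s+\a-1}\big)$; splitting $\de\O$ according to whether $|x^\ast-y|\gtrless 2d$, and using $|x-y|\ge d$ together with $\mathcal H^{N-1}(\de\O\cap B_\rho(x^\ast))\le C\rho^{N-1}$, one checks that this, divided by $|x-y|^{N+2s-2}$, is integrable over $\de\O$ uniformly in $x$ since $\a>0$. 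Hence the boundary integral converges absolutely as well.

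For the identity, given $x\in\O$ we have $\Ds_\O u(x)=\lim_{\e\to0}\int_{\O\setminus B_\e(x)}\frac{u(x)-u(y)}{|x-y|^{N+2s}}\,dy$ (for $x\in\de\O$ one reads $\Ds_\O u(x)$ off this very formula, or via continuity, see below). Split $\int_{\O\setminus B_\e(x)}\frac{u(x)-u(y)}{|x-y|^{N+2s}}\,dy=A_\e+B_\e$ with
\[
A_\e:=\int_{\O\setminus B_\e(x)}\frac{u(x)-u(y)-\n u(x)\cdot(x-y)}{|x-y|^{N+2s}}\,dy,\qquad B_\e:=\int_{\O\setminus B_\e(x)}\frac{\n u(x)\cdot(x-y)}{|x-y|^{N+2s}}\,dy.
\]
By dominated convergence $A_\e$ converges to the first integral in the statement. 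For $B_\e$, since $N+2s\neq2$ one has $\frac{x-y}{|x-y|^{N+2s}}=\frac{1}{N+2s-2}\n_y|x-y|^{-(N+2s-2)}$, so, $\n u(x)$ being constant in $y$, the divergence theorem on the Lipschitz domain $\O\setminus B_\e(x)$ yields
\[
B_\e=\frac{1}{N+2s-2}\int_{\de\O\setminus B_\e(x)}\frac{\n u(x)\cdot\nu(y)}{|x-y|^{N+2s-2}}\,d\s(y)+\frac{1}{(N+2s-2)\,\e^{N+2s-1}}\int_{\O\cap\de B_\e(x)}\n u(x)\cdot(x-y)\,d\s(y),
\]
because on $\O\cap\de B_\e(x)$ the exterior normal of $\O\setminus B_\e(x)$ is $(x-y)/\e$. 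As $\e\to0$ the first term tends to $\frac{1}{N+2s-2}\int_{\de\O}\frac{\n u(x)\cdot\nu(y)}{|x-y|^{N+2s-2}}\,d\s(y)$ by the absolute convergence just proved, and on $\de\O$ we have $\n u(x)\cdot\nu(y)=(\n u(x)-\n u(y))\cdot\nu(y)$ since $\de_\nu u\equiv0$ there. Thus it remains to show that the spherical--cap term $E_\e(x):=\frac{1}{(N+2s-2)\e^{N+2s-1}}\int_{\O\cap\de B_\e(x)}\n u(x)\cdot(x-y)\,d\s(y)$ tends to $0$.

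If $x\in\O$ this is immediate: for $\e<\dist(x,\de\O)$ the cap is the whole sphere $\de B_\e(x)$ and $\int_{\de B_\e(x)}(x-y)\,d\s=0$ by symmetry, so $E_\e(x)=0$; in particular the truncated principal values converge and the formula holds at interior points (from which it could also be deduced on $\de\O$ by continuity of both sides, the left--hand side via Lemma~\ref{lem:mapping-prop}). If $x\in\de\O$, one uses $\de_\nu u(x)=0$ to replace $\n u(x)\cdot(x-y)$ by $\n u(x)\cdot\big((x-y)-((x-y)\cdot\nu(x))\nu(x)\big)$, so only the component of $x-y$ tangent to $\de\O$ at $x$ contributes; comparing $\O\cap\de B_\e(x)$ with the tangent half--sphere $\{|y-x|=\e,\ (y-x)\cdot\nu(x)<0\}$, whose tangential moments vanish by symmetry, the remainder is controlled by the measure of the symmetric difference of the two caps, which is small as $\de\O$ flattens near $x$; this gives $E_\e(x)\to0$ and finishes the proof. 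The step I expect to be the main obstacle is precisely this boundary--point analysis of $E_\e(x)$: one must play the Neumann condition $\de_\nu u(x)=0$ off against the $O(\e^{-(N+2s-1)})$ prefactor and the (limited) regularity of $\de\O$ so that the cap contribution still vanishes; everything else—the bookkeeping of the two absolutely convergent integrals and the divergence--theorem computation for interior $x$—is routine.
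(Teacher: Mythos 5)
Your proposal is correct and follows essentially the same route as the paper: the same splitting of $\Ds_{\O}u$ into the second-order remainder plus the gradient term, the same identity $\frac{x-y}{|x-y|^{N+2s}}=\frac{1}{N+2s-2}\n_y|x-y|^{2-N-2s}$ with the divergence theorem, the Neumann condition to pass from $\n u(x)\cdot\nu(y)$ to $(\n u(x)-\n u(y))\cdot\nu(y)$, and the same analysis of the spherical-cap term (zero by symmetry at interior points; at boundary points, the Neumann condition at $x$ together with comparison to the tangent half-sphere and the flatness of $\de\O$). The only point where the paper is more explicit than you is the quantitative version of your ``flattening'' step: it bounds the symmetric difference by $|\O\bigtriangleup H_x^+\cap \de B_\e(x)|=\e^{N-2}o(\e^{2s})$, writing $\de\O$ locally as a graph $y_N=\g(y')=o(|y'|^{2s})$, which is exactly what is needed against the $\e^{1-N-2s}$ prefactor.
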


Notice that, for simplicity, we establish the result in case of bounded domains.
Still, an analogous result could be proved for unbounded domains.

 \begin{proof}[Proof of Lemma \ref{lem:rew-reg-op}]
 We have 
  \begin{align*}
\Ds_{\O}u(x)&=\int_{{\O}}\frac{u(x)-u(y)- \n u (x)\cdot (x-y) }{|x-y|^{N+2s}}\, dy +p.v.\int_{{\O }}\frac{   \n u (x)\cdot(x-y) }{|x-y|^{N+2s}}\, dy.
 \end{align*}
  Define 
  $$
  I_\e(x):= \int_{\O\cap \de B_\e(x)}\frac{\n u(x)\cdot \eta_\e(y)}{|x-y|^{N+2s-2}}d\s(y),
  $$
  where $\eta_\e$ is the unit exterior normal of $B_\e(x)$.
 Using that $\n_y  |x-y|^{-N-2s+2}= (N+2s-2) (x-y) |x-y|^{-N-2s}$,  the divergence theorem and the fact that $\n u (y)\cdot \nu(y)=0$ for $y\in \de\O$,  we then get 
 \begin{align*}
(N+2s-2)p.v. \int_{{\O}}\frac{   \n u (x)\cdot(x-y) }{|x-y|^{N+2s}}\, dy &=  p.v.\int_{\de\O}\frac{\n u(x)\cdot\nu(y)}{|x-y|^{N+2s-2}}d\s(y)\\
&=   \int_{\de\O}\frac{[\n u(x)-\n u(y)]\cdot\nu(y)}{|x-y|^{N+2s-2}}d\s(y)+ \lim_{\e\to 0}I_\e(x).
 \end{align*}
 If $x\in \O$ then we immediately get $ \lim_{\e\to 0}I_\e(x)=0$ and the result follows.  We now consider the case     $x\in \de\O$,  then denote by $H_x:=T_x\de\O$ the tangent plane of $\de\O$ at $x$ and $H^+_x$ the upper half space containing $-\nu(x)$ with boundary $H_x$.  
The proof is now complete once we show that
 \be\label{eq:pv-remainder}
I_\e(x)=-C(N)\e^{1-2s} \n u(x)\cdot \nu(x)+o_\e(1)=o_\e(1),
 \ee
 where $C(N)=\int_{S^{N-1}_+}\th_N\, d\s(\th)$.
We  have 
\begin{align}
&I_\e(x)=\int_{\O\cap \de B_\e(x)}\frac{\n u(x)\cdot \eta_\e(y)}{|y-x|^{N+2s-2}}d\s(y)=\e^{-N-2s+1} \int_{\O\cap \de B_\e(x)}{\n u(x)\cdot (y-x)} d\s(y) \nonumber\\
&=\e^{-N-2s+1} \int_{H_x^+\cap \de B_\e(x) }{\n u(x)\cdot (y-x)} d\s(y)-\e^{-N-2s+1} \int_{\O\bigtriangleup H_x^+\cap \de B_\e(x)} {\n u(x)\cdot (y-x)}  d\s(y).
\label{eq:I-epsi-normal}
\end{align}
By oddness, we have  $\int_{H_x^+\cap \de B_\e(x) }{\n u(x)\cdot (y-x)} d\s(y)=-\e^{N}\n u(x)\cdot \nu(x)\int_{S^{N-1}_+}\th_N\, d\s(\th)$. On the other hand, since $\O$ is of class $C^{2s}$,  we can write a neighbourhood of $x$ in $\de\O$  as a graph of a function $y_N=\g(y')=o(|y'|^{2s} )$ over the tangent plane $H_x$.  We then observe  that  if $y\in \O \bigtriangleup H_x^+\cap \de B_\e(x)$ then $|y'-x|\leq\e$ and $|y_N|\leq \g(y')$, so that   
$$
|\O \bigtriangleup H_x^+ \cap \de B_\e(x)|=\e^{N-2} o(\e^{2s}) .
$$
We thus conclude from this and \eqref{eq:I-epsi-normal} that \eqref{eq:pv-remainder} holds.
 \end{proof}

 \begin{remark}
 It follows from the above proof that the map
 $$
 x\mapsto\int_{\O\cap |x-y|>\e}\frac{u(x)-u(y)}{|x-y|^{N+2s}}\, dy
 $$
 converges in $C_{loc}(\ov\O)$ to $\Ds_\O u$, as $\e\to 0$. 
 The proof also shows that for  $\Ds_\O u(x)$ to be defined at $x\in \de\O$, then necessarily, $\de_\nu u(x)=0$. See also \cite{Guan-Ma}, where these type of results were obtained.   
 \end{remark}

 \section{Regional Neumann problem}
\label{sec5}

In this section, we consider $\O$ a domain of class $C^{\max(1, 2s+\a)}$ with $0\in \de\O$ and $\nu(0)=-e_N$.
 For,  $2>2s+\a>1$,  we suppose that there exists a global  diffeomorphism $\psi\in C^{2s+\a}(\R^{N};\R^N)$,   satisfying 
 \be\label{eq: diffeom}
 \de\O\cap B_2  =\psi( B_2'\times \{0\}),  \qquad  \O\cap B_2=\psi(  B_2^+),    \qquad   \psi(0)=0
 \ee
 and
 \be  \label{eq:de-n-psi-normal}
-\frac{ \de_{y_N}\psi(y',0)}{|\de_{y_N}\psi(y',0)| }=\nu(\psi(y',0)) \quad\textrm{ for $y'\in  B_2'$},
 \ee
 where $\nu$ is the exterior normal of $ \de\O $,  see \cite{AR19} for the construction of such map. \\
 In the case $2s+\a<1$,   we simply assume that  $\de\O\cap B_2$ is parameterized by a global diffeomorphism  $\psi\in C^1(\R^N;\R^N)$   satisfying \eqref{eq: diffeom}.\\
Next, we define the operator  $\cL_\psi $ by
\be \label{eq:cL-psi}
\cL_\psi w(x):=c_{N,s}\int_{B_2^+}\frac{w(x)-w(y)}{|\psi(x)-\psi(y)|^{N+2s}}Jac_\psi(y)\, dy,
\ee
so that for $u(x)=w(\psi(x))$, we get
\be\label{eq:change-of-var}
\Ds_{\psi(B_2^+)}u(\psi(x))=\cL_\psi w(x). 
\ee
We also define the bilinear form $\calD_\psi: H^s(\R^N_+)\times H^s(\R^N_+)\to \R$ corresponding to this operator as 
$$
  \calD_\psi(w,\vp)=\frac{c_{N,s}}{2}\int_{B_2^+\times B_2^+}\frac{(w(x)-w(y))(\vp(x)-\vp(y))}{|\psi(x)-\psi(y)|^{N+2s}} Jac_\psi(x)Jac_\psi(y)\, dy dx.
$$
 Let  $A\in \R^{N-1} =\{y_N=0 \}$ and define 
 \be \label{eq:q_A}
 q_A(y)=A\cdot y =A\cdot y'  
 \ee
 We now define 
$$
Q_A: \R^N\to \R^N, \qquad Q_A(x)=q_A(\psi^{-1}(x))  .
$$
 We have 
 $$
 Q_A(\psi(y))=q_A(y)=A\cdot y',
 $$
and thus
 $$
0 =\de_{y_N}q_A(y)= \n Q_A(\psi(y))\cdot   \de_{y_N}\psi(y).
 $$
 This, together  with \eqref{eq:de-n-psi-normal}, imply that 
$$
 \de_\nu Q_A  =0 \qquad\textrm{ on  $ \de\O \cap B_2 $.}
$$
Hence,  since $Q_A\in C^{2s+\a}(B_2\cap \ov\O)$ for $2s+\a>1$,  by Lemma \ref{lem:mapping-prop} and \eqref{eq:change-of-var}  we get:

\begin{lemma} \label{lem:Op-aff}
Let $\cL_\psi$ and $q_A$ be given by \eqref{eq:change-of-var} and  \eqref{eq:q_A}, respectively.  Then,    
   \begin{align*} 
\|\cL_\psi   q_A\|_{C^\a(  \ov {B_{1}^+ })}\leq C   |A| .
 \end{align*}
 with $C$ depending only on $\a,  \|\psi\|_{C^{2s+\a}(B_2)},  N,s$.
 \end{lemma}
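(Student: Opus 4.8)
The plan is to deduce the estimate from the change of variables~\eqref{eq:change-of-var}, from the identity $\de_\nu Q_A=0$ on $\de\O\cap B_2$ established just above, and from the mapping property in Lemma~\ref{lem:mapping-prop}.

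First I would record that, since $q_A=Q_A\circ\psi$, formula~\eqref{eq:change-of-var} gives $\cL_\psi q_A(x)=\Ds_{\O\cap B_2}Q_A(\psi(x))$ for $x\in B_2^+$ (recall $\psi(B_2^+)=\O\cap B_2$). Since $Q_A=A\cdot\psi^{-1}$ and $\psi$ is a diffeomorphism with $\psi\in C^{2s+\a}(B_2)$, its inverse is $C^{2s+\a}$ with norm controlled by $\|\psi\|_{C^{2s+\a}(B_2)}$, so $\|Q_A\|_{C^{2s+\a}(\ov{\O\cap B_2})}\le C|A|$. As $\psi$ is bi-Lipschitz on $\ov{B_1^+}$, it then suffices to bound $\Ds_{\O\cap B_2}Q_A$ in $C^\a$ on the compact set $K:=\psi(\ov{B_1^+})$ by $C|A|$.

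To get this last bound I would invoke Lemma~\ref{lem:mapping-prop}. Two harmless technical points need care: $\O\cap B_2$ is not globally a bounded $C^{2s+\a}$ domain (it carries the extra boundary piece $\O\cap\de B_2$), and $Q_A$ is only defined near the origin. The clean fix is to extend $Q_A$ to a function $\wtilde{Q}_A\in C^{2s+\a}(\ov\O)$ agreeing with $Q_A$ on $\psi(\ov{B_{3/2}^+})$ and with $\de_\nu\wtilde{Q}_A=0$ on all of $\de\O$ (near $\de\O\cap B_2$ this is automatic; elsewhere it is arranged by a standard cutoff, since there $Q_A$ plays no role). Lemma~\ref{lem:mapping-prop} on the bounded $C^{2s+\a}$ domain $\O$ then yields $\|\Ds_\O\wtilde{Q}_A\|_{C^\a(\ov\O)}\le C\|\wtilde{Q}_A\|_{C^{2s+\a}(\ov\O)}\le C|A|$. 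Finally, for $x\in K$ the difference $\Ds_\O\wtilde{Q}_A(x)-\Ds_{\O\cap B_2}Q_A(x)$ equals an integral of $\big(\wtilde{Q}_A(x)-\wtilde{Q}_A(y)\big)|x-y|^{-N-2s}$ over $\O\setminus B_2$ plus an integral over $(\O\cap B_2)\setminus\psi(\ov{B_{3/2}^+})$ of the corresponding increment of $\wtilde{Q}_A-Q_A$; both regions lie at distance bounded below from $K$, so this difference has bounded derivatives on $K$ and is controlled there in $C^\a$ by $C|A|$. Combining these facts gives $\|\cL_\psi q_A\|_{C^\a(\ov{B_1^+})}\le C|A|$.

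The main obstacle is exactly this last comparison together with the localization of Lemma~\ref{lem:mapping-prop} to the non-smooth set $\O\cap B_2$ (alternatively, one can observe directly that the proof of Lemma~\ref{lem:mapping-prop} is local plus a manifestly smooth far tail, and argue on $\O\cap B_2$ from the start); everything else is routine bookkeeping. When $2s+\a<1$ the argument is even simpler: then $Q_A\in C^1\subset C^{2s+\a}$ with $\|Q_A\|_{C^{2s+\a}}\le C|A|$, and one applies Lemma~\ref{lem:2s-a-s-1}, which holds on arbitrary open sets and requires no boundary condition, directly on $\O\cap B_2$.
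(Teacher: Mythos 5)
Your proposal is correct and follows essentially the same route as the paper: the paper's proof is precisely the combination of the change-of-variables identity \eqref{eq:change-of-var}, the fact (established just before the lemma) that $Q_A\in C^{2s+\a}(B_2\cap\ov\O)$ with $\de_\nu Q_A=0$ on $\de\O\cap B_2$ and $\|Q_A\|_{C^{2s+\a}}\le C|A|$, and Lemma \ref{lem:mapping-prop} (with Lemma \ref{lem:2s-a-s-1} covering $2s+\a<1$). Your additional bookkeeping --- extending $Q_A$ to all of $\ov\O$ with zero normal derivative and estimating the smooth far-field difference between $\Ds_\O$ and $\Ds_{\O\cap B_2}$ on $\psi(\ov{B_1^+})$ --- simply makes explicit the localization that the paper leaves implicit, and is sound.
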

 
 \begin{proof}
 It follows from Lemma \ref{lem:mapping-prop} and \eqref{eq:change-of-var}.
 \end{proof}

 Let 
\be \label{eq:def-ell-a}
 \ell=\ell_\a=
 \begin{cases}
  1 \quad \textrm{ for $2s+\a> 1$}\\
   0   \quad\textrm{ for $2s+\a< 1$.}
   \end{cases}
\ee
 For $r>0$ and $w\in L^2_{loc}(\R^N)$,  let  $A_{r,w}\in \R^{N-1}$  such that
\be\label{eq:Aru}
\inf_{A\in \R^N} \|w-\ell A\cdot y'\|_{L^2(B_r^+)}^2= \| w-\ell A_{r,w}\cdot  y'\|_{L^2(B_r^+)}^2.
\ee
Then
\be
\ell A_{r,w} \cdot e_i=\frac{1}{\int_{B_r^+}z_i^2\, dz} \int_{B_r^+} w (y)y_i\, dy.
\ee
In particular
\be\label{eq:perp-Aru}
\int_{B_r^+}( w(y)- \ell A_{r,w} \cdot y' ) y_i\, dy=0 \qquad\textrm{ for all $i=1,\dots,N-1$.} 
\ee
Before stating the main result of the present section, we   record  first the following result from \cite{Fall-regional}.
 
 \begin{lemma}[\cite{Fall-regional}]\label{lemm:appr-esrtim}
 Let $u\in H^s(\R^N_+)$ and $ f\in L^\infty(\R^N_+)$ be a solution to $  \calD_\psi(u,\vp)=\int_{\R^N_+}u\vp\, dx$ for all $\vp\in C^\infty_c(B_2)$. 
  Then, for all $\varrho\in (0,2s)$,  there exists $C=C(N,s,\varrho)>0$ such that 
$$
\|u \|_{C^{2s-\varrho}(B_1)}\leq C \left(  \|u\|_{L ^2(\R^N_+)}+ \|f\|_{L^\infty(\R^N_+)} \right).
$$
\end{lemma}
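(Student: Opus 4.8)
\textbf{Plan for the proof of Lemma~\ref{lemm:appr-esrtim}.}
The statement is an interior-plus-boundary $C^{2s-\varrho}$ estimate for weak solutions of the conormal (Neumann) problem associated to the pulled-back operator $\cL_\psi$ on $B_2^+$; since the result is quoted from \cite{Fall-regional}, the task is essentially to reduce it to the estimates proved there and to the tools already available in the present excerpt. The plan is to combine an interior estimate away from $\{y_N=0\}$ with a boundary estimate near the flat part of $\partial B_2^+$, and then to cover $B_1$ by finitely many such pieces.

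First I would reduce to a statement about the regional fractional Laplacian on a genuine domain. By \eqref{eq:change-of-var}, if we set $v := u\circ\psi^{-1}$, then $v$ is a weak solution (in the sense of the Definition of Section~1.1, with zero Neumann boundary condition on $\partial\Omega\cap B_2$ where $\Omega=\psi(B_2^+)$) of $\Ds_\Omega v = f\circ\psi^{-1}$ in $\Omega\cap B_\rho$ for a suitable $\rho$; the bilinear form $\calD_\psi$ is exactly $D_\Omega$ transported by $\psi$. Since $\psi$ is a $C^1$ (resp.\ $C^{2s+\alpha}$) diffeomorphism, $C^{2s-\varrho}$ norms of $v$ and $u$ are comparable and $L^2$ and $L^\infty$ norms of $f\circ\psi^{-1}$ and $f$ are comparable, with constants depending only on $\|\psi\|$ and $N,s$. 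Hence it suffices to prove the corresponding estimate for $v$ on $\Omega$. Next I would split $B_1$ into an interior region $\{y_N > \eta\}$ and a boundary collar $\{y_N \le \eta\}$. In the interior region, translated balls $B_{c\,d(x)}(x)$ lie in $\Omega$, and the interior regularity theory for the regional fractional Laplacian (as used, e.g., in the proof of Lemma~\ref{lem:mapping-prop}, or directly the interior Schauder/De Giorgi-type estimates for $\Ds_\Omega$ with bounded right-hand side) gives $[v]_{C^{2s-\varrho}}$ bounds with the stated dependence on $\|v\|_{L^2}$ and $\|f\|_{L^\infty}$, after controlling the tail $\int_{\Omega}|v(y)|/|x-y|^{N+2s}\,dy$ by $\|v\|_{L^2(\Omega)}$ using that $\Omega$ is bounded.

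Near the boundary I would use the Caffarelli--Silvestre/censored-extension machinery: the boundary Hölder regularity for censored (regional) stable processes — equivalently, the flat-boundary estimate quoted as Lemma~\ref{lem:fr-to-reg} after a further flattening — yields $v\in C^{\beta}(\overline{B_{1/2}^+})$ with $\beta = 2s-1$ if $2s>1$ and $\beta\in(0,2s)$ if $2s\le 1$, and then an iteration/bootstrap on dyadic half-balls improves this to $C^{2s-\varrho}$ for every $\varrho\in(0,2s)$. Concretely, at a boundary point one rescales, applies Lemma~\ref{lem:fr-to-reg} to the rescaled solution (whose right-hand side is $M^{2s}(f\circ\psi^{-1})(M\cdot)$ plus lower-order error coming from the cutoff and from the difference quotients of $\psi$), sums the resulting geometric series of oscillation bounds, and obtains the pointwise $C^{2s-\varrho}$ modulus of continuity at that point with constant $C(\|v\|_{L^2(\Omega)}+\|f\|_{L^\infty(\Omega)})$. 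Patching the interior and boundary estimates via a standard covering of $\overline{B_1}$ finishes the proof.

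The main obstacle I expect is the boundary iteration: unlike the restricted fractional Laplacian, the regional operator $\Ds_\Omega$ only "sees" the domain, so the natural comparison/compactness functions near $\partial\Omega$ are the homogeneous solutions $x_N^{\max(2s-1,0)}$ and constants, and one must make sure the bootstrap does not overshoot the first nontrivial homogeneity $\beta_1=2s+\alpha_s$ (which is why the statement is capped at $C^{2s-\varrho}$ rather than $C^{2s+\alpha}$). Keeping careful track of (i) the lower-order error terms produced by the cutoff $\chi_R$ and by the non-affineness of $\psi$, and (ii) the fact that the flattening diffeomorphism is only $C^{2s+\alpha}$ (so one cannot afford more than $2s-\varrho$ regularity of the metric), is the delicate bookkeeping; but since all of this is already carried out in \cite{Fall-regional}, in the paper it suffices to invoke that reference once the reduction via $\psi$ has been made explicit.
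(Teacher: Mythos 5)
The paper offers no proof of this lemma at all: it is quoted verbatim from \cite{Fall-regional}, and your argument, after making the change of variables via $\psi$ explicit, likewise ends by invoking that reference, so you are taking essentially the same route as the paper. The one caveat is that the step you describe as an ``iteration/bootstrap'' upgrading the boundary estimate of Lemma \ref{lem:fr-to-reg} from $C^{2s-1}$ to $C^{2s-\varrho}$ is exactly the nontrivial content of \cite{Fall-regional} (for $2s>1$ it rests on a Liouville-type classification of solutions with growth below $2s$, not merely on summing oscillation bounds), but since the lemma is cited rather than reproved, this does not constitute a gap in the present context.
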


We next show the following.

\begin{proposition}\label{prop-blow-up}
Let $s\in (0,1)$ and  $\alpha_s\in(0,1-s)$ be given by Lemma~\ref{analysis-beta}.  Let  $\a\in (0, \a_s)$, with $2s+\a\not=1$,     $\psi$ be  as above,  with $\|\psi\|_{C^{\max(1,2s+\a)}(\R^N)}\leq c_0$.  Let $f\in C^{\a}(\R^N_+)$ and $w\in H^s(\R^N_+)\cap C(\R^N_+)$ such that  $w(0)=0$ and 
$$
  \calD_\psi(w,\vp)=\int_{\R^N_+} f \vp\, dx\qquad\textrm{ for all $\vp\in C^\infty_c(B_2)$}.
$$
 Let   $\varrho< 2s+\a-1$ if $2s+\a>1$.
Then,  provided
\be\label{eq:ap-est}
\ell |A_{r,w}|\leq c_0 r ^{-\varrho}, 
\ee
 there exists $C= C(N,s,\a,c_0,\varrho,\b)$ such that 
\be\label{eq:decay-origin}
\sup_{r>0}r^{-2s-\b}\|w - \ell   A_{r,w} \cdot y'\|_{L^\infty(B_r^+)}\leq C (\|w\|_{L^\infty(\R^N_+)}+\|f\|_{ C^\a(\R^N_+)}),
\ee
where $\b=\a-\varrho$,  $\ell=\ell_\a$ and $A_{r,u}$ are given by \eqref{eq:def-ell-a} and \eqref{eq:Aru} respectively.     
\end{proposition}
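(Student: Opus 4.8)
The plan is to argue by contradiction, via a blow-up and compactness scheme whose rigidity input is the Liouville theorem of Theorem~\ref{th:Liouville-Neum}. So I would suppose the (a priori) estimate \eqref{eq:decay-origin} fails: then there are diffeomorphisms $\psi_k$ as in the statement, $f_k\in C^{\a}(\R^N_+)$, and solutions $w_k\in H^s(\R^N_+)\cap C(\R^N_+)$ of $\calD_{\psi_k}(w_k,\vp)=\int_{\R^N_+}f_k\vp\,dx$ with $w_k(0)=0$, which one may normalise so that $\|w_k\|_{L^\infty(\R^N_+)}+\|f_k\|_{C^{\a}(\R^N_+)}=1$ while
\[
\Theta_k:=\sup_{r>0}r^{-2s-\b}\,\|w_k-\ell A_{r,w_k}\cdot y'\|_{L^\infty(B_r^+)}\longrightarrow\infty
\]
(we may assume this supremum is a priori finite, the general case following by approximation). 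Using $\ell|A_{r,w_k}|\le c_0r^{-\varrho}$ one checks the supremand stays bounded in $k$ for $r$ bounded away from $0$, so one can pick $r_k\downarrow0$ realising at least half of $\Theta_k$; setting $c_k:=\|w_k-\ell A_{r_k,w_k}\cdot y'\|_{L^\infty(B_{r_k}^+)}$ this gives $\tfrac12 r_k^{2s+\b}\Theta_k\le c_k\le r_k^{2s+\b}\Theta_k$. Then I would introduce the blow-ups
\[
v_k(x):=\frac{w_k(r_kx)-\ell A_{r_k,w_k}\cdot(r_kx)'}{c_k},\qquad \widetilde\psi_k(x):=\tfrac1{r_k}\psi_k(r_kx),
\]
so that $\|v_k\|_{L^\infty(B_1^+)}=1$, $v_k(0)=0$, $\int_{B_1^+}v_k\,x_i\,dx=0$ for $i=1,\dots,N-1$ by \eqref{eq:perp-Aru}, and, by the scaling of the operator and \eqref{eq:change-of-var}, $\cL_{\widetilde\psi_k}v_k=g_k$ in $B_{2/r_k}^+$ with $g_k:=c_k^{-1}\bigl(r_k^{2s}f_k(r_k\,\cdot)-\ell r_k\,\cL_{\widetilde\psi_k}q_{A_{r_k,w_k}}\bigr)$. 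Normalising also $D\psi_k(0)=\id$ (compatible with \eqref{eq: diffeom}--\eqref{eq:de-n-psi-normal}), one has $\widetilde\psi_k\to\id$ in $C^{2s+\a-\e}_{\loc}(\overline{\R^N_+})$ with $\|\widetilde\psi_k-\id\|_{C^{2s+\a}(B_R^+)}\lesssim r_k^{2s+\a-1}R^{2s+\a}$, and $\widetilde\psi_k(B_{2/r_k}^+)$ exhausts $\R^N_+$.

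The first real step is the uniform growth bound $\|v_k\|_{L^\infty(B_R^+)}\le CR^{2s+\b}$ for $1\le R\le 2/r_k$, with $C=C(N,s,\a,c_0,\varrho,\b)$. This follows from the definition of $\Theta_k$ by dyadic telescoping: from $\|w_k-\ell A_{\rho,w_k}\cdot y'\|_{L^\infty(B_\rho^+)}\le\Theta_k\rho^{2s+\b}$ one gets $\ell|A_{2\rho,w_k}-A_{\rho,w_k}|\lesssim\Theta_k\rho^{2s+\b-1}$, and summing over scales between $r_k$ and $r_kR$ (the series converging because $2s+\b>1$ when $\ell=1$, and there being nothing to sum when $\ell=0$) gives $\ell|A_{r_kR,w_k}-A_{r_k,w_k}|\lesssim\Theta_k(r_kR)^{2s+\b-1}$; dividing by $c_k\asymp r_k^{2s+\b}\Theta_k$ yields the claim. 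In particular $|v_k(x)|\le C(1+|x|)^{2s+\a}$, and recall $2s+\a<2$.

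Next I would pass to the limit. The rescaled interior–boundary estimate of Lemma~\ref{lemm:appr-esrtim}, combined with the growth bound and with the bounds on $g_k$ below, gives a uniform $C^{2s-\varrho'}_{\loc}(\overline{\R^N_+})$ bound on $v_k$, so along a subsequence $v_k\to v_\infty$ in $C_{\loc}(\overline{\R^N_+})$, with $\|v_\infty\|_{L^\infty(B_1^+)}=1$, $v_\infty(0)=0$, $\int_{B_1^+}v_\infty\,x_i\,dx=0$, $|v_\infty(x)|\le C(1+|x|)^{2s+\a}$, and zero Neumann data on $\partial\R^N_+$. The crucial point is to identify the limiting equation as $\Ds_{\R^N_+}v_\infty\stackrel{\a}{=}0$ in $\R^N_+$. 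Cutting off, $\cL_{\widetilde\psi_k}(\chi_Rv_k)=g_k+E_R^{(k)}$ on $B_{R/2}^+$ with $E_R^{(k)}(x)=\int\frac{(1-\chi_R(y))\,v_k(y)}{|\widetilde\psi_k(x)-\widetilde\psi_k(y)|^{N+2s}}\,Jac_{\widetilde\psi_k}(y)\,dy$; using the growth bound and bi-Lipschitz bounds on $\widetilde\psi_k$ one gets $|\n E_R^{(k)}|\lesssim R^{\b-1}\le R^{\a-1}$ on $B_{R/2}^+$, exactly the decay required of the remainder $\cR_R$. As for $g_k$, the term $c_k^{-1}r_k^{2s}f_k(r_k\,\cdot)$ equals the constant $c_k^{-1}r_k^{2s}f_k(0)$ plus a function of $L^\infty(B_R^+)$-norm $\lesssim c_k^{-1}r_k^{2s+\a}R^{\a}\lesssim\Theta_k^{-1}r_k^{\varrho}R^{\a}\to0$; and since $\Ds_{\R^N_+}(A\cdot x')\stackrel{k}{=}0$ (as $A\cdot x'$ is harmonic with vanishing normal derivative), a perturbative form of Lemma~\ref{lem:Op-aff} together with $\|\widetilde\psi_k-\id\|_{C^{2s+\a}(B_R^+)}\lesssim r_k^{2s+\a-1}R^{2s+\a}$ and the constraint \eqref{eq:ap-est} (which makes the factors $r_k^{-\varrho}$ and $r_k^{\varrho}$ cancel) show that, for each fixed $R$, $c_k^{-1}\ell r_k\,\cL_{\widetilde\psi_k}q_{A_{r_k,w_k}}$ is a polynomial plus a term tending to $0$ in $L^\infty(B_R^+)$. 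Letting $k\to\infty$ (with $E_R^{(k)}\to E_R^{(\infty)}$, same bound, and subtracting the constant $E_R^{(\infty)}(0)$) and absorbing the remaining additive constants into $c_R$, we obtain $\Ds_{\R^N_+}(\chi_Rv_\infty)=c_R+\cR_R$ on $B_{R/2}^+$ with $\cR_R(0)=0$ and $\|\n\cR_R\|_{L^\infty(B_{R/2})}\lesssim R^{\a-1}$; that is, $\Ds_{\R^N_+}v_\infty\stackrel{\a}{=}0$ in $\R^N_+$.

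Finally, since $\a<\a_s$, $2s+\a\ne1$ and $2s+\a<2$, Theorem~\ref{th:Liouville-Neum}(i) forces $v_\infty(x',x_N)=B+A\cdot x'$. The orthogonality relations $\int_{B_1^+}v_\infty\,x_i\,dx=0$ then force $A=0$ (using $\int_{B_1^+}x_ix_j\,dx=0$ for $i\ne j$, $\int_{B_1^+}x_i\,dx=0$ and $\int_{B_1^+}x_i^2\,dx>0$), and $v_\infty(0)=0$ forces $B=0$; hence $v_\infty\equiv0$, contradicting $\|v_\infty\|_{L^\infty(B_1^+)}=1$. The step I expect to be the main obstacle is this passage to the limit, namely organising the three error contributions in the rescaled equation --- the kernel tail from the cutoff $\chi_R$, the perturbation $\widetilde\psi_k\to\id$, and the affine correction $q_{A_{r_k,w_k}}$ --- so that the blow-up limit is genuinely a global solution of controlled growth in the generalised $\stackrel{\a}{=}0$ sense; the hypothesis \eqref{eq:ap-est} is precisely what keeps the $q_A$-contribution controlled, and working with the generalised notion (whose additive constants $c_R$ are unconstrained) is what lets one bypass proving convergence of certain a priori unbounded scalars. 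The uniform growth bound for $v_k$ is the other delicate point, but it reduces to routine dyadic telescoping once $2s+\b>1$ is available.
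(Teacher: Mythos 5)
Your proposal is correct and follows essentially the same contradiction--blow-up scheme as the paper: selection of a scale $r_k$ nearly realizing the supremum, growth control of the rescaled functions via dyadic telescoping and \eqref{eq:ap-est}, compactness from Lemma \ref{lemm:appr-esrtim}, control of the cutoff tail, the $f$-oscillation and the affine correction (via Lemma \ref{lem:Op-aff}) so that the limit solves $(-\Delta)^s_{\R^N_+}v\stackrel{\alpha}{=}0$, and a contradiction with the Liouville Theorem \ref{th:Liouville-Neum} and the orthogonality normalizations. The only minor loose ends are the selection of $r_k$ when the supremum may a priori be infinite --- handled in the paper (following \cite{RS16a}, and spelled out in its Dirichlet analogue) by the truncated monotone supremum rather than ``approximation'' --- and your upfront normalization $D\psi_k(0)=\mathrm{id}$, which the paper avoids by keeping the limit matrix $\mathrm{diag}(1,\dots,1,\lambda)$ and rescaling only at the very end.
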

\begin{proof}
Suppose that \eqref{eq:decay-origin} does not hold.  Then for all integer $n\geq 2$, there exist  $\psi_n\in C^{\max(1,2s+\a)}(\R^N)$, $f_n \in C^{\a}_c(\R^N_+)$ and $w_n \in H^{s}(\R^N_+)\cap C(\R^N_+)$, with $w_n(0)=0$, such that 
\be\label{eq:psi_n-c0}
\|\psi_n\|_{C^{\max(1,2s+\a)}(\R^N)}\leq c_0 ,
\ee
\be 
\|w_n\|_{L^\infty(\R^N_+)}+\|f_n\|_{ C^\a(\R^N_+)}\leq 1
\ee
such that 
\be\label{eq:ap-est-nn}
\ell |A_{r,w_n}|\leq c_0 r ^{-\varrho}, 
\ee
\be \label{eq:D_psi-n}
  \calD_{\psi_n}(w_n,\vp)=\int_{\R^N_+} f_n \vp\, dx\qquad\textrm{ for all $\vp\in C^\infty_c(B_2)$}.
\ee
and
$$
\sup_{r>0}r^{-2s-\a}\| w_n -\ell  A_{r,w_n} \cdot y'\|_{L^\infty(B_r^+)}>n.
$$
Then, since $2s+\b>1$ for $2s+\a>1$,  by a well known (see e.g. \cite{RS16a}),  we can find  sequences $r_n\to 0$ and $\th_n\to \infty$ such that  the function
$$
v_n(y):=\frac{1}{r_n^{2s+\a} \th_n}\left(  w_n(r_ny) -r_n\ell   A_{r_n,w_n} \cdot y' \right)
$$
satisfies 
\be\label{eq:to-contradict}
\|v_n\|_{L^\infty(B_1^+)}\geq \frac{1}{4} 
\ee
and, in  view of \eqref{eq:ap-est-nn},   for all $M\geq 1$    with  $M r_n\leq 1$, 
\be\label{eq:v_n-growth-1}
\|v_n\|_{L^\infty(B_M^+)}\leq  C M^{2s+\b},
\ee
while  for $2s+\a<1$
\be\label{eq:v_n-growth-2}
\|v_n\|_{L^\infty(B_M^+)}\leq  C M^{2s+\b} \qquad\textrm{ for all $M \geq 1$},
\ee
Moreover,  by construction (recalling \eqref{eq:perp-Aru}), 
\be\label{eq:perp}
v_n(0)=0, \qquad\int_{B_{1}^+}v_n(y) y_i\, dy=0 \qquad\textrm{ for all $i=1,\dots,N-1$.} 
\ee
Now for $M r_n>1$ and $2s+\a>1$,  we  have 
$$
\|v_n\|_{L^\infty(B_M^+)}\leq \frac{1}{r_n^{2s+\a} \th_n}(1+ r_n^{-\varrho})\leq M^{2s+\b}+ M^{2s +\b+\varrho}\leq  2 M^{2s+\a}.
$$
In view of this, \eqref{eq:v_n-growth-1} and \eqref{eq:v_n-growth-2}, we see that whenever  $2s+\a\not=1$, 
\be\label{eq:v_n-growth}
\|v_n\|_{L^\infty(B_M^+)}\leq  C M^{2s+\a} \qquad\textrm{ for all $M \geq 1$}.
\ee
Define $\ti\psi_n(x)=\frac{1}{r_n}\psi(r_n x)$ and  let $1<R<\frac{1}{r_n}$.  Then by a change of variable in \eqref{eq:D_psi-n},  
we thus get 
\be\label{eq:final-eq} 
  \calD_{\ti \psi_n}(\chi_Rv_n,\vp)=\int_{\R^N_+} G^n_R \vp\, dx\qquad\textrm{ for all $\vp\in C^\infty_c(B_{\frac{R}{2}})$},
\ee
where 
$$
G^n_R(x):= \frac{1}{r_n^{\b}\th_n} f_n(r_n x)+ c_{N,s} \int_{\R^N_+}\frac{(\chi_R(x)-\chi_R(y)) v_n(y)}{|\ti\psi_n(x)-\ti\psi_n(y)|^{N+2s}} J_n(y)\, dy  - \frac{\ell }{r_n^{\a}\th_n}   \cL_{\psi_n} [   A_{r_n,w_n} \cdot  y'](r_n x),
$$
where $J_n(y):=Jac_{\ti \psi_n}(y) 1_{B^+_{\frac{1}{r_n}}}(y).$
Since $\psi_n$ is a global  diffeomorphism,   by  \eqref{eq:psi_n-c0} and increasing $c_0$ if necessary,  we obtain
\be \label{eq:bi-Lip}
c_0|x-y|\geq  | \ti\psi_n(x)-\ti\psi_n(y) |\geq c_0^{-1}|x-y|, \qquad |D\ti\psi_n(x)| \leq c_0  \qquad\textrm{ for all $x,y\in \R^N$. }
\ee
  Define, for $x\in B_{  R/2}$, 
\begin{align*}
F_R^n(x)&:=c_{N,s}\int_{\R^N_+}\frac{(\chi_R(x)-\chi_R(y)) v_n(y)}{|\ti\psi_n(x)-\ti\psi_n(y)|^{N+2s}} J_n(y)\, dy\\
&= c_{N,s}\int_{\R^N_+\cap \{|y|\geq R\}}\frac{(1-\chi_R(y)) v_n(y)}{|\ti\psi_n(x)-\ti\psi_n(y)|^{N+2s}} J_n(y)\, dy.
\end{align*}
Since $\ti\psi_n(0)=0$,  by the mean value theorem, we have  $F_R^n(x)=F_R^n(0)+\cR_R^n(x)$,  where 
$$
 \cR_R^n(x):=-(N+2s)c_{N,s}\int_0^1 \int_{\R^N_+ }\frac{(\ti\psi_n(x)-\ti\psi_n(y))  \cdot \ti\psi_n(x)}{|\ti\psi_n(x)-\ti\psi_n(y)|} \frac{(1-\chi_R(y)) v_n(y)}{| t \ti\psi_n(x)-\ti\psi_n(y)|^{N+2s+1}} J_n(y)\, dy\, dt.
$$
Note that by \eqref{eq:bi-Lip},   $| t \ti\psi_n(x)-\ti\psi_n(y)|\geq \frac{c_0}{2} |y|$ whenever  $|y|\geq R$ and $|x|\leq \frac{R}{2c_0^2}$.
Therefore, 
by \eqref{eq:v_n-growth} and \eqref{eq:bi-Lip},    there exists $C,n_0>0$ such that
\be\label{eq:cRnR}
 | \n \cR_R^n(x) |\leq {C}  R^{\a-1} \qquad\textrm{ for all $R>1$,  $|x|\leq \frac{R}{2c_0^2}$ and $n\geq n_0$}
\ee 
and  by construction we have  $\cR_R^n(0)=0$.
We can thus write
\be\label{def-GnR}
G^n_R(x)= H^n_R(x)+ c^n_R+\cR^n_R(x), 
\ee
with  
$$
H^n_R(x)= \frac{1}{r_n^{\b}\th_n}\left\{ f_n(r_n x)- f_n(0)+\cL_{\psi_n} [ A_{r_n,w_n} \cdot  y'](r_nx)- \cL_{\psi_n} [  A_{r_n,w_n} \cdot  y'](0)  \right\}  
$$
and 
$$
c^n_R= \frac{1}{r_n^{\b}\th_n}\left\{    f_n(0)+ \cL_{\psi_n} [  A_{r_n,w_n} \cdot  y'](0)  \right\}  + F_R^n(0).
$$
Recalling that $\|f_n\|_{C^\a(\R^N_+)}\leq 1$,   we then get from \eqref{eq:ap-est-nn} and Lemma \ref{lem:Op-aff}  that 
\be\label{def-HnR}
|H^n_R(x)|\leq   \frac{r_n^\a C}{r_n^\b\th_n} + \ell\frac{r_n^{\a-\varrho} }{r_n^ \b\th_n}\leq \frac{C( R)} {\th_n} \qquad\textrm{ for all $x\in B_{R/2}$.} 
\ee
Let us now show  that $c_R^n$ is bounded.  For this, we pick $\phi\in C^\infty_c(  B_{\frac{1}{2c_0^2}}^+)$ with $\int_{\R^N}\phi\, dx=1$.
  Then  multiply \eqref{eq:final-eq} by $\phi$ and integrate over $\R^N_+$ to get
\begin{align}\label{eq:cnR}
|c^n_R|\leq \int_{ \R^N_+}| \chi_R(x) v_n(x)| |\cL_{\ti\psi_n}\phi(x)|\, dx+  \int_{ \R^N_+}| H_R^n(x) \phi(x)|dx+ \int_{ \R^N_+}| \cR^n_R(x) \phi(x)|dx\leq C(R).
\end{align}
In view of this,  \eqref{eq:final-eq},  \eqref{def-GnR}, \eqref{def-HnR} and  \eqref{eq:cRnR}, we can apply Lemma \ref{lemm:appr-esrtim}, to deduce that
$(v_n)_n$ is bounded in $C^{s+\d}_{loc}(\ov {\R^N_+})$,  for some $\d>0$, and converges to some $v\in C^{s+\d}_{loc}(\ov {\R^N_+})\subset H^{s}_{loc}(\ov {\R^N_+})$.  
In addition passing to the limit in \eqref{eq:to-contradict}, \eqref{eq:v_n-growth} and \eqref{eq:perp}, we get 
\be\label{eq:v-growth}
  |v(x)|\leq C (1+|x|^{2s+\a}),   
\ee
\be\label{eq:v-contradiction}
\|v\|_{L^\infty(B_1^+)}\geq \frac{1}{4}, \qquad  v(0)=0, \qquad  
  \int_{B_1^+} v(y) y_i\, dx=0,   \textrm{ for $i=1,\dots,N-1$. }
\ee
Passing to the limit in \eqref{eq:final-eq},  \eqref{eq:cRnR} and using \eqref{eq:cnR},  we get,  for all $\phi\in C^\infty_c(B_{\frac{R c_0^2}{2}})$,
$$
c_{N,s}\int_{\R^{2N}_+}\frac{((\eta_R v)(x)-(\eta_Rv)(y))(\phi(x)-\phi(y))}{|Bx-By|^{N+2s}}\textrm{det}(B)^2 dydx=\int_{\R^N_+}(c_R^\infty+\cR^\infty_R(x))\phi(x)\, dx,
$$
where $B=\lim_{n\to \infty} D\ti\psi_n(0)= \lim_{n\to \infty} D\psi_n(0)$  and 
\be \label{eq:cR-cR}
c^\infty_R\in \R, \qquad   \cR_R^\infty(0) =0, \qquad | \n \cR_R^\infty(x) |\leq {C}  R^{\a-1} \qquad\textrm{ for all $R\geq 2$ and  $x\in B_{\frac{R }{2c_0^2}}$.}
\ee
Recall  from the construction of $\psi_n$ that $B=\textrm{diag}(1,\dots,1,\l)$ is a diagonal matrix, for some $\l\not=0$.  Hence by scaling  and \eqref{eq:cR-cR},  we can assume that 
$$
\Ds_{{\R^N_+}} v \stackrel{\a}{=}0 \qquad\textrm{ in $\R^N_+$}.
$$
Applying  Theorem \ref{lem:from-ND-1D} and using \eqref{eq:v-growth},  we deduce that  $v(x)=b+ a\cdot x'$. 
This is in contradiction with \eqref{eq:v-contradiction}.
\end{proof}

As a consequence, we have :

\begin{corollary}\label{cor-blow-up}
Let $s\in (0,1)$ and  $\alpha_s\in(0,1-s)$ be given by Lemma~\ref{analysis-beta}.  Let  $\a\in (0, \a_s)$,    $\psi$ be  as above,  with $\|\psi\|_{C^{\max(1,2s+\a)}(\R^N)}\leq c_0$.  Let $f\in C^{\a}(\R^N_+)$ and $w\in H^s(B_2^+)\cap L^\infty(\R^N_+)$ such that   
$$
  \calD_\psi(w,\vp)=\int_{\R^N_+} f \vp\, dx\qquad\textrm{ for all $\vp\in C^\infty_c(B_2)$}.
$$
Then,   there exists $C= C(N,s, c_0,\a)$ such that for all $z\in B_{1/2}'$ there exists a vector $e=e_z\in \R^{N-1}=:\de\R^N_+$, with $|e|\leq C$  and 
\be\label{eq:-Taylor-ok}
\sup_{r>0}r^{-2s-\a}\| w-w(z) - e \cdot (y'-z)\|_{L^\infty(B_r^+(z))}\leq C\left( \|w\|_{L^\infty(\R^N_+)}+\|f\|_{ C^{\a}(\R^N_+)} \right).
\ee
In particular,  if $2s+\a>1$ then $\de_{y_N} w=0$ on $B_{1/2}'$.
\end{corollary}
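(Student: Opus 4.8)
The plan is to obtain Corollary~\ref{cor-blow-up} from Proposition~\ref{prop-blow-up} by translating the problem to the origin and then removing the a priori hypothesis \eqref{eq:ap-est} via a short bootstrap. I would fix $z\in B_{1/2}'$ and set $w_z(y):=w(y+z)-w(z)$ and $\psi_z(y):=\psi(y+z)-\psi(z)$. Translating changes neither $\mathrm{Jac}_\psi$, nor the differences $|\psi(\cdot)-\psi(\cdot)|$, nor the normal normalization of $\psi$ on the flat boundary, nor the $C^{\max(1,2s+\alpha)}$ norm, while $w_z(0)=0=\psi_z(0)$, $\mathcal{D}_{\psi_z}(w_z,\varphi)=\int_{\R^N_+}f(\cdot+z)\varphi\,dx$ for $\varphi\in C^\infty_c(B_{3/2})$, $\|f(\cdot+z)\|_{C^\alpha}\le\|f\|_{C^\alpha}$, and (by Lemma~\ref{lemm:appr-esrtim}) $w_z$ is continuous near the origin. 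Thus, after a harmless rescaling restoring the radius $2$ (which only affects constants), the hypotheses of Proposition~\ref{prop-blow-up} hold for $(w_z,\psi_z,f(\cdot+z))$, except possibly \eqref{eq:ap-est}.

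The heart of the matter is to verify \eqref{eq:ap-est} for $w_z$. If $2s+\alpha<1$ then $\ell=0$ and there is nothing to do, so suppose $2s+\alpha>1$. I would first produce a crude bound $|A_{r,w_z}|\le c_0 r^{-\varrho_0}$ for small $r$, with exponent $\varrho_0\in[0,2s+\alpha-1)$. If $s\ge\frac12$, the previously known regularity $w\in C^{2s-\tau}(\overline{B_1^+})$ from Lemma~\ref{lemm:appr-esrtim} (with $\tau$ small) gives $|w_z(y)|\lesssim |y|^{\min(1,2s-\tau)}$ near $0$, hence $\varrho_0=\max(0,1-2s+\tau)<2s+\alpha-1$. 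If $s<\frac12$ this known regularity is too weak, and I would instead apply Proposition~\ref{prop-blow-up} once with a subcritical exponent $\alpha''\in(0,1-2s)$ (for which $\ell_{\alpha''}=0$, so \eqref{eq:ap-est} is automatic), chosen so that $\alpha''<\min\{\alpha_s,\alpha\}$ and $1-2s-\alpha''<2s+\alpha-1$; the resulting decay $\|w_z\|_{L^\infty(B_r^+)}\lesssim r^{2s+\alpha''}$ yields $\varrho_0=1-2s-\alpha''<2s+\alpha-1$. With such a $\varrho_0$ in hand, applying Proposition~\ref{prop-blow-up} with $\varrho=\varrho_0$ gives $\|w_z-A_{r,w_z}\cdot y'\|_{L^\infty(B_r^+)}\lesssim r^{2s+\beta_0}$ with output exponent $2s+\beta_0=2s+\alpha-\varrho_0>1$; comparing $A_{r,w_z}$ with $A_{2r,w_z}$ on $B_r^+$ then gives $|A_{2r,w_z}-A_{r,w_z}|\lesssim r^{2s+\beta_0-1}$ with summable exponent, so (using also $|A_{r,w_z}|\lesssim\|w\|_{L^\infty(\R^N_+)}/r$ for $r\ge1$) one gets $\sup_{r>0}|A_{r,w_z}|\le C$.

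Now \eqref{eq:ap-est} holds for $w_z$ with $\varrho=0$, and a final application of Proposition~\ref{prop-blow-up} with $\varrho=0$ yields
\[\sup_{r>0}r^{-2s-\alpha}\big\|w_z-\ell A_{r,w_z}\cdot y'\big\|_{L^\infty(B_r^+)}\le C\big(\|w\|_{L^\infty(\R^N_+)}+\|f\|_{C^\alpha(\R^N_+)}\big).\]
When $2s+\alpha>1$ this forces $A_{r,w_z}\to e_z$ as $r\to0$ for some $e_z\in\R^{N-1}$ with $|e_z|\le C$ and $|A_{r,w_z}-e_z|\lesssim r^{2s+\alpha-1}$; replacing $A_{r,w_z}$ by $e_z$ gives \eqref{eq:-Taylor-ok}. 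When $2s+\alpha<1$ one simply takes $e_z=0$, and the displayed bound is \eqref{eq:-Taylor-ok}. All constants are controlled by $N,s,c_0,\alpha$, uniformly in $z\in B_{1/2}'$. Finally, if $2s+\alpha>1$ then \eqref{eq:-Taylor-ok} is a first-order Taylor expansion of $w$ at each $z\in B_{1/2}'$ with tangential gradient $e_z$ and no normal term, so $\partial_{y_N}w(z)=0$; hence $\partial_{y_N}w=0$ on $B_{1/2}'$.

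The step I expect to be the main obstacle is the verification of \eqref{eq:ap-est}: the naive estimate coming from the known $C^{2s-\varepsilon}$ boundary regularity is good enough only when $s$ is not too small, and for small $s$ one is forced to feed Proposition~\ref{prop-blow-up} into itself --- first with a subcritical exponent $\alpha''$, then with $\varrho_0$, then with $\varrho=0$ --- so as to upgrade the growth control of the affine-approximation coefficients $A_{r,w_z}$ all the way to boundedness. Keeping the bookkeeping of exponents correct (ensuring $\alpha''<\min\{1-2s,\alpha_s,\alpha\}$ with $1-2s-\alpha''<2s+\alpha-1$, then $\varrho_0<2s+\alpha-1$, then $2s+\beta_0>1$) and keeping every constant independent of $z\in B_{1/2}'$ is the delicate part; the rest is translation, rescaling, and a standard dyadic summation.
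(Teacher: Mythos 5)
Your proposal is correct and follows essentially the same route as the paper: reduce to $z=0$ (translation plus Lemma \ref{lemm:appr-esrtim} for continuity), verify the a priori bound \eqref{eq:ap-est} by a case analysis in $s$ --- directly from the known H\"older/Lipschitz regularity when $2s\geq 1$, and via a preliminary application of Proposition \ref{prop-blow-up} with a subcritical exponent when $2s<1$ --- and then conclude with the Campanato-type dyadic iteration producing $e_z$ and the Taylor-expansion argument for $\partial_{y_N}w=0$. The only deviations are bookkeeping ones (you fold $2s=1$ into the $s\geq\tfrac12$ case and write out explicitly the iteration the paper only cites), not matters of substance.
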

\begin{proof}
Thanks to Lemma \ref{lemm:appr-esrtim}, the function   $v_z(x)=\chi_{2}(x)(w(x+z)-w(z))$  satisfies the hypothesis of Proposition \ref{prop-blow-up} .  We can thus prove \eqref{eq:-Taylor-ok} with $z=0$.  We assume for simplicity that $\|w\|_{L^\infty(\R^N_+)}+\|f\|_{ C^{\a}(\R^N_+)}\leq 1.$\\
If $2s+\a_s<1$, then  the result clearly holds with $e=0$.  We start with \eqref{eq:-Taylor-ok}   and we distinguish the two cases $2s> 1$ and $2s<1$.   We start with the case $2s>1$, where   Lemma \ref{lemm:appr-esrtim},   yields $|w(y)|\leq C|y|$,   so that \eqref{eq:ap-est} holds with $\varrho=0$. 
By \eqref{eq:decay-origin} and an (algebraic) iteration argument (see e.g. \cite{RS16a, Campanato}),  we  can find $e\in \R^{N-1}$ such that
\be\label{eq:2s-eq-1}
 \|w -e\cdot  y'\|_{L^\infty(B_r^+)}\leq C  r^{2s+\a}, \qquad|e|\leq C.
\ee
 We finally consider the case $2s<1$.  Here,    \eqref{eq:decay-origin} implies  that  $|w(y)|\leq C|y|^{2s+\a'}$  as soon as  $2s+\a'<1$ (i.e. $\a'<1-2s<\a$).   As a consequence $w$ satisfies \eqref{eq:ap-est} with $\varrho=1-2s-\a'$.  Therefore  choosing $\a'$ close to $1-2s$ so that $2s+\a-\varrho>1$, we  get from \eqref{eq:decay-origin} and an iteration argument, as above,  that 
 \be\label{eq:2s-eq-1-123}
 \|w -e\cdot  y'\|_{L^\infty(B_r^+)}\leq C r^{2s+\a-\varrho} , \qquad|e|\leq C,
\ee
which again implies that $|w(y)|\leq C |y|$.  Hence  $w$ satisfies \eqref{eq:2s-eq-1-12}   with $\varrho=0$ which, once again by an iteration argument, yields  \eqref{eq:2s-eq-1-123} with $\varrho=0$.

If $2s= 1$  then  Lemma \ref{lemm:appr-esrtim} implies that  $|w(y)|\leq C|y|^{1-\varrho}$,  for all $\varrho\in (0,1)$  so that \eqref{eq:ap-est} holds with   $\varrho \in (0, \a_{\frac{1}{2}})$.   We thus get, from  \eqref{eq:decay-origin} and an iteration argument,   as above,    a vector $e\in \R^{N-1}$ such that
\be\label{eq:2s-eq-1-12}
 \|w -e\cdot  y'\|_{L^\infty(B_r^+)}\leq C r^{1+\a_{\frac{1}{2}}-\varrho} , \qquad|e|\leq C.
\ee
 This gives  $|w(y)|\leq C|y|$, so that  $w$ satisfies \eqref{eq:ap-est}  with $\varrho=0$.    Hence as above,  we get \eqref{eq:2s-eq-1-12} with $\varrho=0$.
 \end{proof}

 We will also need the following interior Schauder estimate.
 
 \begin{lemma}\label{lem:int-Scuader}
Let  $\O$ be an open set containing $B_2$ and $\a\in (0,1)$.  Let $f\in C^{\a}(B_2)$ and  $v\in   H^s(\O)  $ be a solution to
 \begin{equation*}
\Ds_{ \O} v=f\qquad\text{in  $B_{2}$}
	\end{equation*}
satisfying    $C_0:=\int_{\O}\frac{|v(y)|dy}{1+|y|^{N+2s+1}}<\infty$.
Then,  provided $2s+\a\not\in \N$, 
	$$
	\|v\|_{C^{2s+\a}(B_{1})}\leq C(N,s,\a)( C_0+\|v\|_{ L^2(B_2)}+ \|f-f(0)\|_{ C^\a(B_2) }   ).
$$
\end{lemma}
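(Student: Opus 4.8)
The plan is to reduce the estimate to the interior Schauder estimate for $(-\Delta)^s$ on $\R^N$, in the generalized sense of Definition~\ref{def-sols-growth-AR} (from \cite{AR19}; see also \cite{FR-book}), by means of a cutoff which turns the regional operator into the usual fractional Laplacian plus an explicit zeroth-order term, and then to remove that zeroth-order term by a bootstrap. Throughout, the only feature of $\Omega$ that is used is $\dist(B_1,\R^N\setminus\Omega)\ge1$, which is why the final constant depends only on $N,s,\alpha$. (Alternatively, the estimate could be proved self-contained by a compactness argument along the lines of Proposition~\ref{prop-blow-up}, but the cutoff reduction is shorter.)

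First, the reduction. For $R\ge2$ one has $\chi_R\equiv1$ on $B_1$, so, writing $v$ also for its extension by $0$ to $\R^N\setminus\Omega$ and splitting the kernel integral over $\Omega$ and over $\R^N\setminus\Omega$, we get for $x\in B_1$
\[
(-\Delta)^s(\chi_R v)(x)=(-\Delta)^s_\Omega v(x)+h(x)\,v(x)+c_{N,s}\int_\Omega\frac{(1-\chi_R(y))\,v(y)}{|x-y|^{N+2s}}\,dy,\qquad h(x):=c_{N,s}\int_{\R^N\setminus\Omega}\frac{dy}{|x-y|^{N+2s}}.
\]
Here $h$ is smooth on $B_1$ with $\|h\|_{C^m(B_1)}\le C(N,s,m)$, independently of $\Omega$. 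In the last integral $|y|\ge R\ge2\ge2|x|$, so Taylor-expanding $|x-y|^{-N-2s}$ in $x$ to order $k-1$ writes it as $p_R(x)+\rho_R(x)$, with $p_R$ a polynomial of degree $<k$ and $|\rho_R(x)|\le C_k\,\omega(R)$ on $B_1$, where $\omega(R):=\int_{\Omega\cap\{|y|\ge R\}}|v(y)|\,(1+|y|^{N+2s+1})^{-1}dy\to0$ as $R\to\infty$; this requires $k\ge2$, and we fix $k=3$, which also satisfies $k>2s+\alpha$ since $2s+\alpha<3$. Using $(-\Delta)^s_\Omega v=f$ in $B_2$, we conclude that
\[
(-\Delta)^s v\ \stackrel{k}{=}\ f+h\,v\qquad\text{in }B_1
\]
in the sense of Definition~\ref{def-sols-growth-AR}, with $\int_{\R^N}|v(y)|\,(1+|y|^{N+2s+k})^{-1}dy\le C_0<\infty$.

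Next, the bootstrap. By the interior regularity theory for the regional fractional Laplacian (\cite{Fall-regional,AFR}), applied with $f\in L^\infty(B_2)$, we have $v\in C^{\beta_0}(\overline{B_{9/8}})$ for some $\beta_0\in(0,\min(2s,1))$. Assuming inductively that $v\in C^\beta$ in a neighbourhood of $\overline{B_1}$ with $0<\beta\le\alpha$ and (after decreasing $\beta$ slightly if necessary) $2s+\beta\notin\N$, we get $h v\in C^\beta$, hence $f+h v\in C^{\min(\alpha,\beta)}=C^\beta$, and the $\R^N$ interior Schauder estimate of \cite{AR19} in the $k$-sense yields $v\in C^{2s+\beta}$ on a smaller ball, quantitatively bounded by $C(C_0+\|v\|_{L^2(B_2)}+[f]_{C^\beta(B_2)}+\|h v\|_{C^\beta})$. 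Since each step raises the Hölder exponent by $2s$, after finitely many iterations we reach $\beta\ge\alpha$, and one further application gives $v\in C^{2s+\alpha}$ on a ball; covering $\overline{B_1}$ by such balls and rescaling (the equation and the estimate being scale invariant) gives $v\in C^{2s+\alpha}(\overline{B_1})$.

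To obtain the precise dependence on $\|f-f(0)\|_{C^\alpha(B_2)}$ rather than on the full $\|f\|_{C^\alpha(B_2)}$, one runs the last step using the seminorm form of the interior estimate for $(-\Delta)^s$ (whose right-hand side involves only $[f]_{C^\alpha}$ together with the $L^2$-norm and tail of $v$, and which already has this normalization; see \cite{AR19}), combined with the interpolation inequality $\|v\|_{C^\alpha(B_{\rho'})}\le\varepsilon\|v\|_{C^{2s+\alpha}(B_{\rho'})}+C_\varepsilon\|v\|_{L^\infty(B_{\rho'})}$ (valid since $\alpha<2s+\alpha$) and an absorption argument over nested balls $B_\rho\subset B_{\rho'}\subset B_1$; since $[f]_{C^\alpha(B_2)}\le\|f-f(0)\|_{C^\alpha(B_2)}$, this gives the claimed bound with $C=C(N,s,\alpha)$. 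The main obstacle is carrying out the reduction rigorously — isolating precisely the polynomial corrections $p_R$ coming from the far field and checking the uniform convergence of the remainders $\rho_R$, so that the problem genuinely fits Definition~\ref{def-sols-growth-AR} — while the interpolation/absorption at the end is routine but must be done keeping all constants independent of $\Omega$.
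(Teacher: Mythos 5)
Your argument is correct in substance and shares the paper's skeleton (cut off, rewrite the regional operator on $B_1$ as $(-\Delta)^s$ plus the bounded potential $h=c_{N,s}\int_{\R^N\setminus\Omega}|x-\cdot|^{-N-2s}dy$ plus a far-field contribution, then interior Schauder for $(-\Delta)^s$ and a bootstrap, with constants depending only on ${\rm dist}(B_1,\R^N\setminus\Omega)\geq 1$), but it diverges on the two delicate technical points. The paper keeps a \emph{fixed} cutoff $\chi_2$, so the far field is the explicit function $g_v$ with $\|g_v-g_v(0)\|_{C^1(B_{1/2})}\leq CC_0$, and the additive constant $c_0=f(0)+g_v(0)$ is bounded by testing the equation against a bump function (exactly as in Remark \ref{rem-gen-to-ok}(ii)); the initial regularity comes from the nonlocal Schr\"odinger estimates of \cite{Fall-reg-1} and the Schauder step from \cite{RS16a}. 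You instead let $R\to\infty$ and work in the $\stackrel{k}{=}$ framework of Definition \ref{def-sols-growth-AR}/\cite{AR19} with $k=3$, discarding the degree $\leq 2$ Taylor polynomials of the far field into $p_R$, and you recover the $f-f(0)$ normalization from a seminorm form of the \cite{AR19} estimate plus interpolation/absorption. Both routes work: the paper's is more self-contained here (no $\stackrel{k}{=}$ machinery is needed for an interior estimate), while yours buys the constant-insensitivity for free, since for $k\geq 1$ the relation $(-\Delta)^s v\stackrel{k}{=}f+hv$ is unchanged upon replacing $f$ by $f-f(0)$ (shift $p_R$ by $f(0)$) --- this is in fact the cleanest justification of the ``seminorm form'' you invoke, and you should state it.

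Two points to handle with care so that the final constant really only sees $\|f-f(0)\|_{C^\alpha(B_2)}$. First, your initial $C^{\beta_0}$ step and the intermediate bootstrap steps are quantified through $\|f\|_{L^\infty(B_2)}$, which contains $|f(0)|$; they must therefore be used only \emph{qualitatively} (to know $\|v\|_{C^{2s+\alpha}}$ is finite on interior balls so that absorption is legitimate), with the quantitative bound extracted solely from the last, constant-free application. Second, in that last application the lower-order term $\|v\|_{L^\infty}$ produced by interpolating $[hv]_{C^\alpha}$ must itself be absorbed (via $\|v\|_{L^\infty(B_r)}\leq \epsilon\,[v]_{C^{2s+\alpha}(B_r)}+C_\epsilon\|v\|_{L^2(B_r)}$), or you would again need an $L^\infty$ bound for $v$ that, as derived earlier in your scheme, depends on $|f(0)|$. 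Also note that the finiteness of the degree-zero coefficient of $p_R$ uses $v\in L^2(\Omega)$ (Cauchy--Schwarz), not just $C_0<\infty$; this is harmless since that coefficient is discarded, but it is worth recording. With these adjustments your proof is complete and of the same strength as the paper's.
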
 
\begin{proof}
We have 
 \begin{equation*}
\Ds_{ \O}( \chi_{{2}}v)=f+g_v \qquad\text{in  $B_{1}$, }
	\end{equation*}
where for $x\in B_1$
$$
g_v(x)=c_{N,s} \int_{ \O}( 1-\chi_{{2}}(y))|x-y|^{-N-2s}v(y)\,dy.
$$	
Letting $\ti v=  \chi_{{2}}v \in H^s(\R^N)$, we  get 
 \begin{equation*}
\Ds_{ \R^N}\ti  v+\ti vV=f+g_v=c_0+  (f-f(0))+ (g_v-g_v(0))\qquad\text{in  $B_{1/2}$, }
	\end{equation*}
where $c_0=f(0)+ g_v(0)$,   $V(x)=c_{N,s}\int_{\R^N\setminus\O}|x-y|^{-N-2s}\,dy$.
Clearly  $\|V\|_{C^3( B_{1})}\leq C(N,s)$.  Moreover   $\|g_v-g_v(0)\|_{C^1( B_{1/2})}\leq C(N,s) C_0$.   Now as in Remark \ref{rem-gen-to-ok}$(ii)$, we can estimate
\begin{align*}
|c_0|&\leq C(N,s)\left( C_0+ \|\ti v V\|_{L^1( B_{1/4})} +  \|g_v-g_v(0)\|_{L^1( B_{1/4})}+\|f-f(0)\|_{L^1( B_{1/4})}  \right)\\
&\leq C(N,s) \left( C_0+ \|v \|_{L^2( B_{2})} +  \|f-f(0)\|_{L^\infty( B_{2})} \right).
\end{align*}
By the interior regularity estimates in \cite{Fall-reg-1}, we have that 
	$$
	\|\ti v\|_{C^{2s-\e}(B_{1/4})}\leq C(N,s,\e)( C_0+\|v\|_{ L^2(B_2)}+ \|f-f(0)\|_{ L^\infty(B_2) }   ).
$$
We can now apply the interior  Schauder  estimate in \cite{RS16a} and a standard bootstrap (only necessary for $2s<1$ because $\a<1$) argument to get the desired estimate.
\end{proof}

We can now prove the following.

\begin{theorem}\label{th-1}
Let $s\in(0,1)$ and  $\alpha_s\in(0,1-s)$ be given by Lemma~\ref{analysis-beta}, which satisfies $2s+\alpha_s>1$.
Let $\Omega\subset \R^N$ be a $C^{\max(1,2s+\alpha)}$ domain, with $0\in \partial\Omega$, and let $f\in C^\alpha(\overline\Omega \cap B_1)$, with $\alpha<\alpha_s$ and with $\alpha+2s\neq1$.
Let $u\in H^s(\O)$ be such that 
$$
 D_\O(u,\vp)=\int_{\O}f\vp\, dx \qquad \textrm{$\forall\vp \in C^\infty_c(B_1)$.}
$$
Then,
\be \label{eq:schaud-Neum}
\|u\|_{C^{2s+\a}(\ov \O\cap B_{1/2})}\leq C\left(\|u\|_{L ^2(\O)}+ \|f\|_{C^\a(\ov \O \cap B_1)}  \right).
\ee
with $C$ depending only on $N$, $s$, $\alpha$, and $\Omega$.

Moreover, if $2s+\a>1$ then $\de_\nu u=0$ on $\de\O\cap B_1$, where $\nu$ is the exterior normal of $\de\O$. 
\end{theorem}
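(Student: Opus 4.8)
The plan is to combine the pointwise boundary expansion from Corollary~\ref{cor-blow-up} with interior Schauder estimates, gluing the two into a genuine $C^{2s+\alpha}$ bound up to $\partial\Omega$. By linearity we normalise $\|u\|_{L^2(\Omega)}+\|f\|_{C^\alpha(\overline\Omega\cap B_1)}\le 1$, and by a covering argument (covering $\overline\Omega\cap B_{1/2}$ by finitely many small balls and patching Hölder seminorms) it suffices to prove the estimate in a small ball around an arbitrary point of $\overline\Omega\cap B_{1/2}$; purely interior balls are handled directly by Lemma~\ref{lem:int-Scuader}, so we fix a boundary point, which after a rigid motion is $0\in\partial\Omega$ with $\nu(0)=-e_N$. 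The preliminary step is to upgrade $u\in H^s(\Omega)$ to $u\in L^\infty(\overline\Omega\cap B_{3/4})$ with a bound by the normalised data: this follows from the already known $C^{2s-\varepsilon}$ regularity theory --- Lemma~\ref{lemm:appr-esrtim} after flattening, together with the interior estimates entering Lemma~\ref{lem:int-Scuader} and the results of \cite{Fall-regional} --- and it is what makes the blow-up machinery of the previous section applicable here.

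Next I would flatten $\partial\Omega$ near $0$ with a diffeomorphism $\psi$ as in \eqref{eq: diffeom}--\eqref{eq:de-n-psi-normal}, and set $w:=u\circ\psi$ on $B_2^+$, extended to all of $\R^N_+$ as a cutoff of the bounded function $u\circ\psi$. Then $w$ satisfies $\calD_\psi(w,\varphi)=\int_{\R^N_+}\tilde f\,\varphi\,dx$ for $\varphi\in C^\infty_c(B_1)$, where $\tilde f$ collects $f\circ\psi$ together with the non-local tail contributions coming from $\Omega\setminus B_2$ and from the cutoff; these tails are smooth on $B_1$, so $\tilde f\in C^\alpha$ with norm controlled by $1$. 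Corollary~\ref{cor-blow-up} then produces, for every $z$ in the flattened piece of the boundary, a tangential vector $e_z$ with $|e_z|\le C$ and $\|w-w(z)-e_z\cdot(y'-z)\|_{L^\infty(B_r^+(z))}\le C r^{2s+\alpha}$ for all $r>0$. Pulling this back through $\psi$ (whose $C^{\max(1,2s+\alpha)}$ regularity bounds all chain-rule remainders), we obtain: for every $z\in\partial\Omega$ near $0$ there is a vector $p_z$ tangent to $\partial\Omega$ at $z$, with $|p_z|\le C$, such that
\[
|u(x)-u(z)-p_z\cdot(x-z)|\le C|x-z|^{2s+\alpha}\qquad\text{for }x\in\Omega\text{ near }z .
\]
When $2s+\alpha<1$ one has $p_z\equiv 0$ and this is already the desired boundary Hölder estimate; when $2s+\alpha>1$ it forces $u$ to be differentiable at $z$ from inside $\Omega$ with gradient $p_z\perp\nu(z)$, which yields $\partial_\nu u=0$ on $\partial\Omega\cap B_1$ (and, once $u\in C^1$ is established below, identifies $p_z=\nabla u(z)$).

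It remains to upgrade the expansion to a full $C^{2s+\alpha}$ bound on $\overline\Omega\cap B_{1/2}$. For an interior point $x_0$ with $d:=\dist(x_0,\partial\Omega)$ small and projection $z_0\in\partial\Omega$, I would subtract from $u$ the curved tangential affine function $Q_{z_0}$ based at $z_0$ --- the analogue of the functions $q_A$ of \eqref{eq:q_A}, whose regional Laplacian is bounded in $C^\alpha$ by Lemma~\ref{lem:Op-aff} (no subtraction is needed when $2s+\alpha<1$) --- and apply the interior estimate of Lemma~\ref{lem:int-Scuader} (or its analogue for $\calD_\psi$ in the flattened variables) to $u-Q_{z_0}$ on $B_{d/2}(x_0)$, rescaled to unit scale. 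The boundary expansion bounds $u-Q_{z_0}$ by $C(1+|x-x_0|)^{2s+\alpha}$ on all of $\Omega$, which controls the tail integral $C_0$ in Lemma~\ref{lem:int-Scuader} uniformly in $x_0$; one thus obtains $[D^{\lfloor 2s+\alpha\rfloor}u]_{C^{2s+\alpha-\lfloor 2s+\alpha\rfloor}(B_{d/2}(x_0))}\le C$ with $C$ independent of $x_0$. Finally, a standard patching argument --- given $x_1,x_2\in\overline\Omega\cap B_{1/2}$ with $r=|x_1-x_2|$, split into the regimes $r\lesssim\min_i\dist(x_i,\partial\Omega)$ (use the uniform interior estimate just obtained), $r\gtrsim\min_i\dist(x_i,\partial\Omega)$ (use the two boundary expansions at the projections of $x_1$ and $x_2$, which lie within distance $\lesssim r$ of each other), and the intermediate regime --- yields $\|u\|_{C^{2s+\alpha}(\overline\Omega\cap B_{1/2})}\le C$, completing the proof.

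The main obstacle is this last gluing step carried out with \emph{uniform} constants. The delicate points are: (i) Corollary~\ref{cor-blow-up} controls $u$ only against the affine comparison function $p_z\cdot(x-z)$, not the Hölder seminorm of $\nabla u$ at $\partial\Omega$, so the interior estimate must be invoked on the correctly subtracted function $u-Q_{z_0}$ to avoid a spurious $d^{-(2s+\alpha)}$ blow-up of the rescaled seminorm; (ii) keeping the right-hand side of the equation for $u-Q_{z_0}$ bounded in $C^\alpha$, which is where Lemma~\ref{lem:Op-aff} (built on Lemma~\ref{lem:mapping-prop}) and the bi-Lipschitz control of $\psi$ enter; and (iii) bookkeeping the various non-local tail terms generated by flattening, cutting off and localising, so that they stay in $C^\alpha$ with norms controlled by $\|u\|_{L^2(\Omega)}+\|f\|_{C^\alpha(\overline\Omega\cap B_1)}$.
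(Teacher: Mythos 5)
Your proposal follows essentially the same route as the paper: the boundary expansion and normal-derivative statement come from Corollary \ref{cor-blow-up} after flattening with $\psi$, one subtracts the curved affine function $\ell_\alpha\, e\cdot\psi^{-1}$ (your $Q_{z_0}$, controlled in $C^\alpha$ via Lemma \ref{lem:Op-aff}/Lemma \ref{lem:mapping-prop}), applies the interior estimate of Lemma \ref{lem:int-Scuader} on balls of radius comparable to the distance to $\partial\Omega$ with the nonlocal tail bounded by the expansion, and concludes by patching the rescaled interior seminorms with the boundary expansion. This matches the paper's argument (the paper is merely terser about the $L^\infty$ reduction and the final gluing), so the proposal is correct.
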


\begin{proof}
%
We assume for simplicity that $\|u\|_{L^2(\O)}+\|f\|_{C^{\a}(\ov \O)}\leq 1$ and that $\O$ satisfies \eqref{eq: diffeom} and \eqref{eq:de-n-psi-normal}.

By Corollary \ref{cor-blow-up} and the  change of variable $w(y)=u(\psi(y))$  we have that, for all $\z\in \de\O \cap B_{1/2}$, there exists $e=e_\z\in \R^{N-1}=:\de\R^N_+$ with $|e_\z|\leq C$ and  such that 
\be \label{taylor-exp-bdr}
\|u-u(\z)- \ell_{\a} e\cdot (\psi^{-1}(\cdot)-\z)\|_{L ^\infty(B_r(\z)\cap\O)}\leq C r^{2s+\a}.
\ee
In addition,  using also  \eqref{eq:de-n-psi-normal},  we have  $\de_\nu u=0 $ on $\de\O\cap B_{1/2}$ as soon as $2s+\a>1$.\\

Let $x_0\in \O\cap B_1$ and $ \z\in \de \O$ be such that $\textrm{dist}(x_0,\de\O)=|x_0-\z|$. Put $\rho:=\frac{|x_0-\z|}{4}$ and $\ov x=\frac{x_0-\z}{2}\in \O$ then  $B_{2\rho}(\ov x)\subset\O$.  We next, define 
$$
v_\rho(y)= (u(\ov x+\rho y)-u(\z))- \ell_{\a} e\cdot (\psi^{-1}(\rho y+\ov x)-\z),  \qquad f_\rho(y)=\rho^{2s}f(\ov x+\rho y)
$$
and $\O_\rho=\frac{1}{\rho}(\O-\ov x )$.
Then 
\be\label{eq:v-rho-int}
\Ds_{\O_\rho} v_\rho=   f_\rho+    F_\rho=:G_\rho  \qquad\textrm{ in $B_{2}\subset \O_\rho$},
\ee
where $F_\rho (y):=\ell_{\a} \rho^{2s} [ \Ds_{\O}(e\cdot \psi^{-1})](\ov x +\rho y))$.
Since $f\in C^{\a}(\O)$,  using  Lemma \ref{lem:mapping-prop},  we  then  get $[G_\rho]_{C^{\a}(B_{1})}\leq C \rho^{2s+\a}$.    By \eqref{taylor-exp-bdr}, we have  that $\|v_\rho\|_{L^{\infty}(B_{1})}\leq C \rho^{2s+\a}$ and 
\be \label{eq:v-rho-near-fin}
\|v_\rho (\cdot/\rho) \|_{L^\infty(B_{R\rho }\cap (\O-\ov x))}\leq \|v_\rho (\cdot/\rho) \|_{L^\infty(B_{R\rho+2\rho }\cap(\O-\z))}\leq C (R \rho)^{2s+\a} \qquad\textrm{ for all $R\geq 1$. }
\ee
 Now applying     Lemma \ref{lem:int-Scuader} to \eqref{eq:v-rho-int},  we obtain
$$
\| v_\rho\|_{C^{2s+\a}(B_{1/4})}\leq C(N,s,\a)\left(  \int_{\O_\rho\setminus B_{1}}\frac{|v_\rho(y)|dy}{|y|^{N+2s+1}}+\|v_\rho \|_{L^{\infty}(B_{1})} + \| G_\rho -G_\rho(0)\|_{C^{\a}(B_{1})} \right).
$$
Thanks to \eqref{eq:v-rho-near-fin} and a change of variable we get 
\begin{align*}
& \int_{\O_\rho\setminus B_{1}}\frac{|v_\rho(y)|dy}{|y|^{N+2s+1}} =\rho^{2s+1} \int_{(\O-\ov x)\setminus B_{\rho}}\frac{|v_\rho(x/\rho )|dx}{|x|^{N+2s+1}}\\
& \leq \sum_{i=0}^\infty  (2^i\rho)^{-N-2s-1} \rho^{2s+1}\int_{(\O-\ov x )\{|y|<2^{i+1}\rho \}}{|v_\rho(x/\rho)|,dx} \\
 & \leq C\sum_{i=0}^\infty  (2^i\rho)^{-N-2s-1} \rho^{2s+1} (2^{i+1}\rho)^{N+2s+\a}\leq  C \rho^{2s+\a}.
\end{align*}
We conclude that $\| v_\rho\|_{C^{2s+\a}(B_{1/4})}\leq C  \rho^{2s+\a}.$
Therefore scaling  and translating back, we get for $2s+\a< 1$ that
\be
[u]_{C^{2s+\a}(B_{\rho}(\ov x))} \leq C
\ee
and thus  $[ u]_{C^{2s+\a}(B_{1/2}\cap\O)} \leq C$ because $\ov x$ is arbitrary.\\

If now $2s+\a>1$, we obtain
$$
[\n u- \n {e}\cdot(\psi^{-1}(\cdot)-\z)]_{C^{2s+\a-1}(B_{\rho}(\ov x))} \leq C.
$$
Hence, since  $[\n {e} \cdot(\psi^{-1}(\cdot)-\z)]_{C^{2s+\a-1}(B_{\rho}(\ov x))} \leq C$, we get 
$$ 
[\n u]_{C^{2s+\a-1}(B_{\rho}(\ov x))} \leq C.
$$
This implies 
$$ 
[\n u]_{C^{2s+\a-1}(B_{1/2}\cap\ov \O)} \leq C.
$$
\end{proof}

Finally, we give the:

\begin{proof}[Proof of Theorem \ref{thm-Neu}]
The result follows from Theorem \ref{th-1}.
\end{proof}

\section{Regional Dirichlet problem }
\label{sec6}

In this section, we consider $\phi \in C^{2, \b}(\R^{N-1})$ satisfying  $\|\phi\|_{ C^{2,\b}(\R^{N-1})}\leq \frac{1}{4}$ and $\phi(0)=|\n\phi(0)|=0$. 
We define $\Phi(x',x_N)=(x',x_N+\phi(x'))$.
We start with the following result.
\begin{lemma}\label{lem:comput-L-prof}
Let $\eta\in C^{2s-1}_c(\R)$ with $\eta=0$ on $\R_-$ and $\ell(x'):=a\cdot x'+b$.  Let $f(x)=\eta(x_N)\ell(x')$. Then,  for all $x\in \R^N_+$, 
\begin{align*}
&c_{N,s}\int_{\R^N_+}\frac{f(x)-f(y)}{|\Phi(x)-\Phi(y)|^{N+2s}}\, dy=\ell(x')T_1(x')\Ds_{\R_+}\eta(x_N)+ \ell(x')T_3(x')   \Ds_{\R_+}(x_N \eta) \\
&-   T_2(x') \cdot a \int_{\R}\frac{(t-x_N)(\eta(t)-\eta(x_N))}{|x_N-t|^{1+2s}}\, dt  -T_3(x') \ell(x')\left(  \eta \Ds_{\R_+}x_N+ x_N \Ds_{\R_+}\eta  \right)+ h(x),
\end{align*}
where $\|h\|_{ C^{\min(\a,\b)} (\ov{B_1^ +})}\leq C\|\eta\|_{C^{2s-1}(\R)} $ for all $\a<2s-1$,  $T_1(0)=1$, $T_2(0)=0$ and 
$\|T_j\|_{ C^{1,\b} (B_1')}\leq C(N,s,\b)  $, for $j=1,2$. Moreover,   $\|T_3\|_{ C^{\b} (B_1')}\leq C(N,s,\b)  $ and 
\be\label{eq:C30-MC}
T_3(0)= -  D^2\phi(0)[e_i,e_i]= -  \D\phi(0).
\ee
%
\end{lemma}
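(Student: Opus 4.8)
The plan is to integrate out the tangential variable first, using that $\Phi$ only shears in the normal direction. Writing $y=(x'+z',t)$ with $z'\in\R^{N-1}$, $t>0$, and setting $\Delta_{x'}(z'):=\phi(x'+z')-\phi(x')$, one has $|\Phi(x)-\Phi(y)|^2=|z'|^2+(x_N-t-\Delta_{x'}(z'))^2$ and $f(x)-f(y)=\ell(x')(\eta(x_N)-\eta(t))-\eta(t)\,a\cdot z'$, whence
\[
c_{N,s}\int_{\R^N_+}\frac{f(x)-f(y)}{|\Phi(x)-\Phi(y)|^{N+2s}}\,dy
= c_{N,s}\,{\rm p.v.}\!\int_0^\infty\!\!\int_{\R^{N-1}}\frac{\ell(x')(\eta(x_N)-\eta(t))-\eta(t)\,a\cdot z'}{\big(|z'|^2+(x_N-t-\Delta_{x'}(z'))^2\big)^{\frac{N+2s}{2}}}\,dz'\,dt .
\]
The main computational tool will be the dimension-reduction identity $c_{N,s}\int_{\R^{N-1}}(|z'|^2+\s^2)^{-\frac{N+2s}{2}}\,dz'=c_{1,s}|\s|^{-1-2s}$, together with its variants: with $z_iz_j$ in the numerator (which give $\frac{\delta_{ij}}{N-1}$ times a multiple of $|\s|^{-1-2s}$ once the power of the kernel is raised accordingly), and with $|z'|^{2+\b}$ in the numerator (which give $C|\s|^{\b-1-2s}$).

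Since $\phi\in C^{2,\b}(\R^{N-1})$ with $\phi(0)=|\n\phi(0)|=0$, I would expand $\Delta_{x'}(z')=\n\phi(x')\cdot z'+\tfrac12 D^2\phi(x')[z',z']+\rho_{x'}(z')$, with $|\rho_{x'}(z')|+|z'|\,|\n_{z'}\rho_{x'}(z')|\le C[\phi]_{C^{2,\b}}|z'|^{2+\b}$ for $|z'|\le1$ and $|\Delta_{x'}(z')|\le C|z'|$ for all $z'$, and then expand the kernel in powers of $\Delta_{x'}(z')$. Here it is essential to keep the linear part $\n\phi(x')\cdot z'$ \emph{inside} the kernel — it is only bounded (by $1/4$), not small, so it must be removed by a linear change of variables in $z'$, which produces a factor $C(\n\phi(x'))$ that is smooth in $\n\phi(x')$ with $C(0)=1$ — while $\tfrac12 D^2\phi(x')[z',z']+\rho_{x'}(z')$ is treated perturbatively. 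After integrating over $z'$ one organizes the result by the $\s$-profile ($\s=x_N-t$) of the remaining one-dimensional integrand: the profile $|\s|^{-1-2s}$ reproduces $\Ds_{\R_+}\eta$, the profile ${\rm sgn}(\s)|\s|^{-2s}$ reproduces, via the Leibniz-type identity $\int_0^\infty(\eta(x_N)-\eta(t)){\rm sgn}(x_N-t)|x_N-t|^{-2s}\,dt=-\tfrac{1}{2c_{1,s}}\big[\Ds_{\R_+}(x_N\eta)-x_N\Ds_{\R_+}\eta-\eta\,\Ds_{\R_+}x_N\big](x_N)$, the $T_3$-combination (and, when paired with $a\cdot z'$, the $T_2$-integral $\int_\R(t-x_N)(\eta(t)-\eta(x_N))|x_N-t|^{-1-2s}dt$), and every more integrable profile goes into $h$. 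Concretely: the terms even in $\n\phi(x')\cdot z'$ (i.e.\ the straightening change of variables) give $\ell(x')\,T_1(x')\,\Ds_{\R_+}\eta(x_N)$ with $T_1$ a smooth function of $\n\phi(x')$, hence $T_1\in C^{1,\b}(B_1')$ and $T_1(0)=1$; the part linear in $D^2\phi(x')$, whose $z'$-integral selects ${\rm tr}\,D^2\phi(x')=\D\phi(x')$, gives the two $T_3$-terms with $T_3$ linear in $D^2\phi(x')$ and coefficients smooth in $\n\phi(x')$, hence $T_3\in C^\b(B_1')$ and, after computing the constant, $T_3(0)=-D^2\phi(0)[e_i,e_i]=-\D\phi(0)$; the part linear in $\n\phi(x')$ paired with $-\eta(t)\,a\cdot z'$ gives $-T_2(x')\cdot a\int_\R(t-x_N)(\eta(t)-\eta(x_N))|x_N-t|^{-1-2s}dt$ with $T_2\in C^{1,\b}(B_1')$ proportional to $\n\phi(x')$ at leading order, so $T_2(0)=0$; and all contributions quadratic or higher in $D^2\phi$, all contributions of $\rho_{x'}$, and the remaining cross terms constitute $h(x)$.

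To estimate $h$, one observes that each residual piece has the form $\iint(\textrm{bounded in }t)\times(\textrm{kernel remainder})\,dz'\,dt$, where compared to the leading kernel the remainder carries at least a factor $|z'|^\b$ (from $\rho_{x'}$) or an extra power of $|x_N-t|$ (from the terms quadratic in $\phi$-derivatives); carrying out the $z'$-integral and then the $t$-integral, and using that $\eta$ is $C^{2s-1}$ up to $\{x_N=0\}$, compactly supported, and vanishes on $\R_-$, each piece turns out to lie in $C^{\g}(\ov{B_1^+})$ for every $\g<\min(2s-1,\b)$ with norm $\le C\|\eta\|_{C^{2s-1}(\R)}$ — this is precisely where the exponent $\min(\a,\b)$ with $\a<2s-1$ comes from, the $\b$ reflecting the $C^{2,\b}$-modulus of the curvature and the remainder, the $2s-1$ the boundary regularity of $\eta$. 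The regularities $T_1,T_2\in C^{1,\b}(B_1')$ and $T_3\in C^\b(B_1')$ follow because each $T_j$ is an explicit convergent integral in $\s$ whose integrand depends on $x'$ only through $\n\phi(x')\in C^{1,\b}$ and $D^2\phi(x')\in C^\b$. I expect the main difficulty to be the sharp bookkeeping — deciding which products (order in the $\phi$-expansion)$\times$(order in the kernel expansion) land on the two canonical $\s$-profiles and must therefore be packaged into $T_1,T_2,T_3$ with the exact regularity and the exact values at $x'=0$, versus which are genuinely residual — together with the delicate principal-value analysis near $t=x_N$ and near $z'=0$ (since $2s>1$ here most of the individual integrals converge only in the principal-value sense, so one must use the cancellation built into the increments $\eta(x_N)-\eta(t)$ and $f(x)-f(y)$), and obtaining the sharp Hölder exponent for $h$ with no loss.
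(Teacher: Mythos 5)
Your plan follows essentially the same route as the paper's proof: you freeze the linear (gradient) part of the shear inside the kernel, expand the Hessian contribution to first order with a $C^{2,\beta}$ remainder, integrate out the tangential variable to identify the one-dimensional profiles $|x_N-t|^{-1-2s}$ and $(t-x_N)|x_N-t|^{-1-2s}$ (producing $T_1$, $T_2$ and the combination $\Ds_{\R_+}(x_N\eta)-x_N\Ds_{\R_+}\eta-\eta\,\Ds_{\R_+}x_N$ for $T_3$), send all higher-order pieces into $h$ with exponent $\min(\alpha,\beta)$, $\alpha<2s-1$, and obtain $T_1(0)=1$, $T_2(0)=0$, $T_3(0)=-\Delta\phi(0)$ by symmetry and an explicit radial integral, exactly as in the paper. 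The only slip is the constant in your Leibniz-type identity: with the paper's normalization of $\Ds_{\R_+}$ the prefactor should be $-1/c_{1,s}$ rather than $-1/(2c_{1,s})$, a harmless bookkeeping correction.
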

\begin{proof}
We have 
\begin{align*}
&\int_{\R^N_+}\frac{f(x)-f(y)}{|\Phi(x)-\Phi(y)|^{N+2s}}\, dy =\int_{\R^{N-1}}\int_{-x_N}^\infty\frac{f(x)-f(x+z)}{|\Phi(x)-\Phi(x+z)|^{N+2s}}\, dz\\
&=\int_{\R^{N-1}}\int_{-x_N}^\infty\frac{f(x)-f(x+z)} {|D\Phi(x')z |^{N+2s}}\, dz+ \int_{\R^{N-1}}\int_{-x_N}^\infty\frac{f(x)-f(x+z))}{|z|^{N+2s}} B(x',|z'|,z'/|z'|, z/|z|)\, dz,
\end{align*}
where 
$$
 B(x',r,\th',\th)=\frac{1}{| \int_{0}^1D\Phi(x'+t r\th' )\th dt|^{N+2s}}-  \frac{1}{|D\Phi(x')\th |^{N+2s}}.
$$
Recall that  for all $y\in \R^N$
$$
 D\Phi(x+ r\th' )y=y+\left( \n\phi(x'+r\th')\cdot y'  \right)e_N.
$$
We can thus write
\be \label{eq:Bsrt}
  B(x',r,\th',\th)= r \mu_1(x',\th',\th)+r  \calO(x',r,\th',\th),
\ee
where 
\begin{align*}
\mu_1(x',\th',\th)=  \de_r B(x',0,\th',\th)&=-(N+2s)\frac{D^2\Phi(x')[\th',\th-(\th\cdot e_N) e_N]\cdot D\Phi(x')\th}{|D\Phi(x')\th|^{N+2s+2}},
%
\end{align*}
and 
$$
 \calO(x',r,\th',\th)=\de_r B(x',r,\th',\th)-\de_r B(x',0,\th',\th).
$$
We then have 
$\mu_1, \calO \in C^\b_x(B_2)$, $\mu_1(x',-\th',-\th)=-\mu_1(x',\th',\th)$,  $|\calO(x',r,\th',\th)|\leq C\min (1, r^\b)$ and 
\be \label{eq:calO}
 |\calO(x',r,\th',\th)|\leq  C\min (1, r^\b), \qquad |\calO(x',r,\th',\th)-\calO(\ov x',r,\th',\th)|\leq  C\min (|x'-\ov x'|, r^\b).
\ee
We thus get from \eqref{eq:Bsrt}, 
\begin{align}
&c_{N,s}\int_{\R^N_+}\frac{f(x)-f(y)}{|\Phi(x)-\Phi(y)|^{N+2s}}\, dy  =c_{N,s}\int_{\R^{N-1}}\int_{-x_N}^\infty\frac{f(x)-f(x+z)} {|D\Phi(x')z |^{N+2s}}\, dz \nonumber\\
&+ c_{N,s}\int_{\R^{N-1}}\int_{-x_N}^\infty\frac{f(x)-f(x+z))}{|z|^{N+2s}}  |z'| \mu_1(x',z'/|z'|,z/|z|)\, dz \nonumber\\
&+c_{N,s}\int_{\R^{N-1}}\int_{-x_N}^\infty \frac{f(x)-f(x+z))}{|z|^{N+2s}}  |z'|\calO(x',|z'|,z'/|z'|,z/|z|)\, dz \nonumber\\
%
%
&=I_1(x)+I_2(x)+ I_3(x).
\label{eq:I1I2I3}
\end{align}
Letting $T_1(x'):=\frac{1}{c_{1,s}}\int_{\R^{N-1}}\frac{c_{N,s}}{|D\Phi(x')(z',1) |^{N+2s}}\, dz'$  and $T_2(x')\cdot e_i=c_{N,s}\int_{\R^{N-1}}\frac{z_i}{|D\Phi(x')(z',1) |^{N+2s}}\, dz'$,
we then get 
\begin{align*}
I_1(x)&=\ell(x')\int_{\R^{N-1}}\int_{-x_N}^\infty\frac{\eta(x_N)-\eta(x_N+z_N)} {|D\Phi(x')z |^{N+2s}}\, dz -\int_{\R^{N-1}}\int_{-x_N}^\infty\frac{a\cdot z' \eta(x_N+z_N)} {|D\Phi(x')z |^{N+2s}}\, dz\\
&=T_1(x') \ell(x')c_{1,s}\int_{-x_N}^\infty \frac{\eta(x_N)-\eta(x_N+z_N)}{|z_N|^{1+2s}} \, dz_N-T_2(x')\cdot a \int_{-x_N}^\infty z_N  \frac{\eta(x_N+z_N)}{|z_N|^{1+2s}}\, dz_N\\
&={T_1(x')}{} \ell(x')\Ds_{\R_+}\eta(x_N)- T_2(x')\cdot a\int_{\R^+}\frac{(t-x_N)\eta(t)}{|x_N-t|^{1+2s}}\, dt.
\end{align*}
Since $\eta=0$ on $\R_-$,   then
\begin{align*}
\int_{\R^+}\frac{(t-x_N)\eta(t)}{|x_N-t|^{1+2s}}\, dt&=\int_{\R}\frac{(t-x_N)\eta(t)}{|x_N-t|^{1+2s}}\, dt =\int_{\R}\frac{(t-x_N)(\eta(t)-\eta(x_N))}{|x_N-t|^{1+2s}}\, dt.
%
%
\end{align*}
We thus conclude that 
\begin{align} \label{eq:I1}
I_1(x)& ={T_1(x')}  \ell(x')\Ds_{\R_+}\eta(x_N)- T_2(x')\cdot a\int_{\R}\frac{(t-x_N)(\eta(t)-\eta(x_N))}{|x_N-t|^{1+2s}}\, dt.
\end{align}
Since $D \Phi(0)=id$,   we see that $  {T_1(0)}=\frac{1}{c_{1,s}}\int_{\R^{N-1}}\frac{c_{N,s}}{(1+|z'|^2)^{\frac{N+2s}{2}}} dz'=1$ and by  oddness we have $T_2(0)=0$.  \\

 We consider next  $I_2$.
Similarly, letting $T_3(x')=\frac{c_{N,s}}{ c_{1,s}}\int_{\R^{N-1}}\frac{|z'|}{|(z',1)|^{N+2s}} \mu_1(x', \frac{z'}{|z'|}, \frac{(z',1)}{|(z',1)|})\, dz'$, we obtain
\begin{align}\label{eq:I2-1}
I_2(x)=& T_3(x')\ell(x') c_{1,s}\int_{-x_N}^\infty z_N\frac{\eta(x_N)-\eta(x_N+z_N)}{|z_N|^{1+2s}} \, dz_N+ h_1(x) \nonumber\\
& = T_3(x') \ell(x')c_{1,s}\int_{\R^+}\frac{(t-x_N)(\eta(x_N)-\eta(t))}{|x_N-t|^{1+2s}}\, dt+h_1(x) \nonumber\\
&={T_3(x')}  \ell(x')\left( \Ds_{\R_+}(x_N\eta)-\eta \Ds_{\R_+}x_N- x_N \Ds_{\R_+}\eta  \right) +h_1(x),
\end{align}
where 
\begin{align*}
h_1(x)&:=c_{N,s} \int_{\R^{N-1}}\int_{-x_N}^\infty \frac{|z'|a\cdot z' \eta(x_N+z_N)}{|z|^{N+2s}}  \mu_1(x',z'/|z'|,z/|z|)\, dz\\
&=c_{N,s}\int_{\R^{N-1}}\frac{|z'|a\cdot z'}{|(z',1)|^{N+2s}} \mu_1(x', \frac{z'}{|z'|},\frac{(z',1)}{|(z',1)|})\, dz'\int_{-x_N}^\infty \frac{ z_N\eta(x_N+z_N)}{|z_N|^{2s}}\, dz_N. 
\end{align*}
Since $2s>1$,  we can easily see that
\be  \label{eq:h_1}
\|h_1\|_{C^{\min(\b,2s-1)}(\ov{B_1^+})}\leq C |a|\|\eta\|_{C^{2s-1}(\R)}.
\ee
Using that $D\Phi(0)=Id$, a change of variable and integration by part, we get
\begin{align}
&c_{1,s}T_3(0)=-(N+2s)   {c_{N,s}}{ } \int_{\R^{N-1}}\frac{|z'|^2D^2\phi(0)[z'/|z'|,z'/|z'|]}{|(z',1)|^{N+2s+2}}  \, dz' \nonumber\\
&=-(N+2s) {c_{N,s}}{ } \int_0^\infty\frac{r^{N}}{(r^2+1)^{\frac{N+2s+2}{2}}}\, dr \sum_{i=1}^{N-1} D^2\phi(0)[e_i,e_i]\int_{S^{N-2}}\th_1^2d\s(\th) \nonumber\\
&=-(N+2s) \frac{c_{N,s} |S^{N-2}|}{N-1 } \int_0^\infty\frac{r^{N}}{(r^2+1)^{\frac{N+2s+2}{2}}}\, dr  \sum_{i=1}^{N-1} D^2\phi(0)[e_i,e_i] \nonumber\\
&=- {c_{N,s} |S^{N-2}|} \int_0^\infty\frac{r^{N-2}}{(r^2+1)^{\frac{N+2s}{2}}}\, dr  \sum_{i=1}^{N-1} D^2\phi(0)[e_i,e_i] =- c_{1,s}  \sum_{i=1}^{N-1} D^2\phi(0)[e_i,e_i].
\label{eq:C30}
\end{align}
Here, we used that ${c_{N,s}}{}\int_{\R^{N-1}}\frac{1}{(1+|z'|^2)^{\frac{N+2s}{2}}} dz'=c_{1,s}$.
Finally,  using \eqref{eq:calO}, we see that  
$$
\|I_3\|_{C^{\min(\a,\b)}(\ov{B_1^+})}\leq C\|f\|_{C^{2s-1}(\R)} \qquad\textrm{ for all $\a\in (0,2s-1)$.}
$$

From this together with  \eqref{eq:C30}, \eqref{eq:h_1}, \eqref{eq:I2-1},  \eqref{eq:I1} and \eqref{eq:I1I2I3}, we get the result.
\end{proof}

\begin{lemma}\label{lem:Dir-Op-bdr}
Let $\ov \eta\in C^{\infty}_c(-3,3)$ with $\ov \eta=1$ on $(-2,2)$ and $\ell(x'):=a\cdot x'+b$.  Define
$$
P(x)=\ov \eta(x_N) (x_N)^{2s-1}_+\ell(x')- b T_3(0) \ov \eta(x_N) (x_N)^{2s}_+  ,
$$
where $T_3(0)$ is given by Lemma \ref{lem:comput-L-prof}.
 Then, for all $x\in B_1^+$, 
\begin{align*}
&c_{N,s}\int_{\R^N_+}\frac{P(x)-P(y)}{|\Phi(x)-\Phi(y)|^{N+2s}}\, dy= g_1(x)\log(x_N)+ g_2(x),
\end{align*}
where $\|g_i\|_{ C^{\min(\b,\a)} (\ov{B_1^+})}\leq C(N,s,\b, \a,\ov\eta) (|a|+|b|)$ for all $\a<2s-1$ and   $g_1(0)=0$.
\end{lemma}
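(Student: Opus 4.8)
The plan is to reduce the statement to Lemma~\ref{lem:comput-L-prof}, applied twice, after splitting $P$ so that every $\log(x_N)$ contribution comes from an explicitly computable one-dimensional quantity. First I would write $P=P_1+P_2$, where
\[
P_1(x)=\ov\eta(x_N)(x_N)_+^{2s-1}\ell(x'),\qquad P_2(x)=-bT_3(0)\,\ov\eta(x_N)(x_N)_+^{2s}.
\]
Since $s>\tfrac12$ we have $2s-1\in(0,1)$, so $\eta_1:=\ov\eta(\cdot)(\cdot)_+^{2s-1}\in C^{2s-1}_c(\R)$ and $\eta_2:=\ov\eta(\cdot)(\cdot)_+^{2s}\in C^{2s}_c(\R)\subset C^{2s-1}_c(\R)$, with norms bounded by a constant depending only on $\ov\eta$ and $s$. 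Then Lemma~\ref{lem:comput-L-prof} applies to $P_1$ (with $\eta=\eta_1$ and the given affine $\ell$) and to $P_2$ (with $\eta=\eta_2$, $\ell\equiv1$, times the scalar $-bT_3(0)$); since the operator and all the explicit terms in that lemma are linear in $\ell$, the two $h$-remainders are controlled by $C(|a|+|b|)$ in $C^{\min(\b,\a)}(\ov{B_1^+})$.

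Next I would evaluate the scalar functions entering the two identities, namely $\Ds_{\R_+}\eta_j$, $\Ds_{\R_+}(x_N\eta_j)$, $\eta_j\,\Ds_{\R_+}x_N$ and $\int_\R\frac{(t-x_N)(\eta_j(t)-\eta_j(x_N))}{|x_N-t|^{1+2s}}\,dt$ on $(0,1)$, using Lemma~\ref{frac-Lap-powers} together with $\Ds_{\R_+}u=\Ds v-a_sx^{-2s}v$ for $v=(u-u(0))1_{\R_+}$. The behaviour is dictated by whether the exponent $\beta$ makes $\sin(\pi(\beta-2s))$ vanish in Lemma~\ref{frac-Lap-powers}. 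For $\beta=2s-1$ there is no resonance: the simple zero of $1/\Gamma(\beta-2s+1)$ at $\beta=2s-1$ compensates the simple pole of $1/\sin(\pi(\beta-2s))$, the coefficient stays finite (equal to $a_s$), so $\Ds\big((x_N)_+^{2s-1}\big)=a_s x_N^{-1}$ and $\Ds_{\R_+}\big((x_N)_+^{2s-1}\big)\equiv0$; after multiplying by $\ov\eta$, the functions $\Ds_{\R_+}\eta_1$, $x_N\Ds_{\R_+}\eta_1$ and $\eta_1\Ds_{\R_+}x_N$ are $\log$-free and of class $C^{2s-1}$ on $[0,1]$ (their cutoff-error integrals converge absolutely). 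For $\beta=2s$ there is a genuine resonance, and continuing the Gamma/sine identity in $\beta$ — equivalently, scaling $\int_0^1\frac{x_N^{2s}-t^{2s}}{|x_N-t|^{1+2s}}\,dt$ — gives
\[
\Ds_{\R_+}\bigl(\ov\eta(\cdot)(\cdot)_+^{2s}\bigr)(x_N)=A_s\log(x_N)+R(x_N),\qquad A_s=\tfrac{\Gamma(2s+1)\sin(\pi s)}{\pi}=2s\,a_s=c_{1,s}\neq0,
\]
with $R\in C^{2s-1}([0,1])$; note that $x_N\eta_1=\ov\eta(\cdot)(\cdot)_+^{2s}$ is exactly of this type. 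For $\beta=2s+1$ the resonance produces $\Ds_{\R_+}\big(\ov\eta(\cdot)(\cdot)_+^{2s+1}\big)(x_N)=A_s'\,x_N\log(x_N)+(\text{linear in }x_N)$, so $x_N\eta_2$ and $x_N\Ds_{\R_+}\eta_2$ contribute only terms of the form $x_N\log(x_N)$, which lie in $C^\gamma([0,1])$ for all $\gamma<1$ and vanish at $x_N=0$. Finally, scaling on $\{|t|\le2\}$, where $\ov\eta\equiv1$, reduces $\int_\R\frac{(t-x_N)(\eta_1(t)-\eta_1(x_N))}{|x_N-t|^{1+2s}}\,dt$ — up to a $C^{2s-1}([0,1])$ function of $x_N$ — to $\int_0^{2/x_N}\frac{(u-1)(u^{2s-1}-1)}{|1-u|^{1+2s}}\,du$, which equals $-\log(x_N)$ up to a $C^{2s-1}([0,1])$ function because the $u$-integrand decays like $u^{-1}$; the analogous integral attached to $\eta_2$ carries a factor $T_2\cdot a$ with $a=0$, hence does not occur.

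Substituting all of this into the two instances of Lemma~\ref{lem:comput-L-prof} and collecting the coefficient of $\log(x_N)$, one is left with
\[
g_1(x)=A_s\,\ell(x')\,T_3(x')+c\,\bigl(T_2(x')\cdot a\bigr)-b\,T_3(0)\,A_s\,T_1(x')+x_N\,r(x),
\]
for a dimensional constant $c$, where $r\in C^\b$ collects the $x_N\log(x_N)$ terms and $g_2$ is the remaining, non-logarithmic part. From $\|T_1\|_{C^{1,\b}(B_1')},\|T_2\|_{C^{1,\b}(B_1')}\le C$, $\|T_3\|_{C^\b(B_1')}\le C$, the $C^{\min(\b,\a)}$-bounds on the two $h$'s, the bounds above, and $\a<2s-1$ (so $C^{2s-1}\hookrightarrow C^{\min(\b,\a)}$), one obtains $\|g_1\|_{C^{\min(\b,\a)}(\ov{B_1^+})}+\|g_2\|_{C^{\min(\b,\a)}(\ov{B_1^+})}\le C(|a|+|b|)$. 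Evaluating at $x=0$, where $\ell(0)=b$, $T_2(0)=0$ and $T_1(0)=1$, the $x_N$-factor term vanishes and $g_1(0)=A_s\,b\,T_3(0)-b\,T_3(0)\,A_s=0$ — which is precisely why the correction $-bT_3(0)(x_N)_+^{2s}$ is added: the $\log$ generated by $(x_N)^{2s-1}\ell$ (appearing through the curvature factor $T_3$) and the $\log$ generated by the correction (appearing through $T_1$, with $T_1(0)=1$) are governed by the same resonance constant $A_s$, so they cancel at the boundary point. The delicate part of the whole argument is the one-dimensional analysis above: pinning down the $\log$ coefficients of the resonant powers $(x_N)_+^{2s}$ and $(x_N)_+^{2s+1}$ via analytic continuation of Lemma~\ref{frac-Lap-powers} — a cut-off power cannot simply be split as ``pure power plus a smooth remainder'' because of the growth at infinity, so the scaling argument must be localized to $\{|t|\le2\}$ and the far part estimated separately — together with verifying that the non-resonant exponents produce no $\log$, and controlling in $C^{2s-1}([0,1])$ all the cutoff-error and scaling remainders; once these scalar facts are in hand, the Hölder bounds on $g_1,g_2$ and the identity $g_1(0)=0$ follow by bookkeeping.
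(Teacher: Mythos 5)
Your proposal is correct and follows essentially the same route as the paper: split $P$ into the $(x_N)_+^{2s-1}\ell(x')$ part and the $(x_N)_+^{2s}$ correction, apply Lemma \ref{lem:comput-L-prof} linearly to each, observe that the only genuinely log-producing one-dimensional quantity is $\Ds_{\R_+}\big(\ov\eta\,(x_N)_+^{2s}\big)$ (the $x_N\log x_N$ terms being absorbed as $x_N\cdot(\cdot)$ in $g_1$ or into $g_2$), and use $T_1(0)=1$, $T_2(0)=0$ so that the two coefficients of that log cancel at the origin, giving $g_1(0)=0$. The only inessential differences are that you derive the 1D expansions by scaling/continuation of Lemma \ref{frac-Lap-powers} instead of the paper's differentiation identity for $H(x_N)=\Ds_{\R_+}(x_N^{2s}\ov\eta)$, and that you identify the log coefficient as $c_{1,s}=2s\,a_s$ (the paper writes $a_s$); this constant is immaterial, since the same function $H$ multiplies both cancelling coefficients.
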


\begin{proof}
Recall from Lemma \ref{lem:comput-L-prof} that 
\begin{align}\label{eq:L1L2}
&c_{N,s}\int_{\R^N_+}\frac{P(x)-P(y)}{|\Phi(x)-\Phi(y)|^{N+2s}}\, dy= L_1(x)+L_2(x)+h(x),
\end{align}
where  $h\in C^\b(\ov{B_1^+})$,
\begin{align*}
L_1(x)&= \ell(x')  T_1(x')\Ds_{\R_+}(x_N^{2s-1}\ov \eta)+ \ell(x') T_3(x')   \Ds_{\R_+}(x_N^{2s}\ov \eta)\\
&-b T_3(0) T_1(x')    \Ds_{\R_+}(x_N^{2s}\ov \eta)-b  T_3(0)T_3(x')   \Ds_{\R_+}(x_N^{2s+1}\ov \eta)
\end{align*}
and 
\begin{align}
L_2 (x) &=  -   T_2(x') \cdot a \int_{\R}\frac{(t-x_N)(t^{2s-1}_+\ov \eta(t)-(x_N)^{2s-1}_+\ov \eta(x_N))}{|x_N-t|^{1+2s}}\, dt \nonumber\\
& -T_3(x') \ell(x')\left(  \ov\eta x_N^{2s-1} \Ds_{\R_+}x_N+ x_N \Ds_{\R_+}(x_N^{2s-1}\ov \eta)  \right) \nonumber\\
&-   b T_3(0)T_2(x') \cdot a \int_{\R}\frac{(t-x_N)(t^{2s}_+\ov \eta(t)-(x_N)^{2s}_+\ov \eta(x_N))}{|x_N-t|^{1+2s}}\, dt \nonumber\\
&-bT_3(0)T_3(x') \left(  \ov\eta x_N^{2s} \Ds_{\R_+}x_N+ x_N \Ds_{\R_+}(x_N^{2s}\ov \eta)  \right). \label{eq:L2}
\end{align}
 We now write 
\begin{align*}
&L_1(x)=- bT_3(0)( T_1(x')-1) \Ds_{\R_+}(x_N^{2s}\ov \eta)+ b(T_3(x') -T_3(0)) \Ds_{\R_+}(x_N^{2s}\ov \eta)\\
&+a\cdot x' T_3(x')   \Ds_{\R_+}(x_N^{2s}\ov \eta)-b T_3(x')  \Ds_{\R_+}(x^{2s+1}_N \ov\eta)  +  \ell(x')  T_1(x')\Ds_{\R_+}(x_N^{2s-1}\ov \eta).
\end{align*}
%
%
Moreover since $x_N\mapsto  x^{2s+1}_N \ov\eta(x_N) $ has zero derivative at $x_N=0$, we get  $\Ds_{\R_+}(x^{2s+1}_N \ov\eta) \in C^\b([0,1])$ by Lemma \ref{lem:mapping-prop}.   In addition,  since  $ \Ds_{\R_+} x^{2s-1}=0$, we see that $\Ds_{\R_+} (x^{2s-1})\ov\eta\in C^\b([0,1])$. Therefore,  recalling that $T_1(0)=1$, 
we thus obtain
\be \label{eq:L1}
L_1(x)=g_1(x)H(x_N)+ g_2(x),
\ee
for some functions $g_i$ as in the statement of the lemma and $H(x_N)=\Ds_{\R_+}(x_N^{2s}\ov \eta)(x_N)$.
To estimate $L_2$,  we first observe that 
\begin{align*}
 &c_{1,s}\int_{\R}\frac{(t-x_N)(t^{2s-1}_+\ov \eta(t)-(x_N)^{2s-1}_+\ov \eta(x_N))}{|x_N-t|^{1+2s}}\, dt \\
 &=-\Ds (x_N( x_N)^{2s-1}_+\ov \eta)+x_N \Ds ( (x_N)^{2s-1}_+\ov \eta)\\
 &=-\Ds_{\R_+} ( (x_N)^{2s}\ov \eta)- a_s +x_N  c_{N,s}\int_{\R_+}\frac{ \ov \eta(x_N)- \ov \eta(t))t^{2s-1}}{|x_N-t|^{1+2s}}\, dt,
\end{align*}
where we used that $ \Ds_{\R_+} x_N^{2s-1}=0$ and $\Ds x_N=0$ and the formula $\Ds(1_{\R_+}u)(x)=\Ds_{\R_+}u+a_s x^{-2s} u$ in $\R_+$.  
We have that 
\begin{align}\label{eq:grad-grad}
  c_{1,s}&\int_{\R}\frac{(t-x_N)(t^{2s}_+\ov \eta(t)-(x_N)^{2s}_+\ov \eta(x_N))}{|x_N-t|^{1+2s}}\, dt=-\Ds( (x_N)_+^{2s+1}\ov \eta)+x_N\Ds (x_N)^{2s}_+ \ov\eta \nonumber\\
  &=-\Ds((x_N)_+^{2s+1}\ov \eta)+x_N\Ds_{\R_+} (x_N^{2s} \ov\eta)+ a_s x_N.
\end{align}
Moreover $\Ds_{\R_+} x_N=\k_s x_N^{1-2s}$, for some $\k_s>0$.  Next,  using that  $ \Ds_{\R_+} x^{2s-1}=0$, we see that $\Ds_{\R_+} (x^{2s-1}\ov\eta)\in C^\b([0,1])$.  Since $T_2(0)=0$,  we thus get from \eqref{eq:grad-grad}  and \eqref{eq:L2}, 
\be   \label{eq:L2}
L_2(x)=g_1(x)H(x_N)+ g_2(x),
\ee
for some functions $g_i$ as in the statement of the lemma and $H(x_N)=\Ds_{\R_+}(x_N^{2s}\ov \eta)(x_N)$.\\
To conclude, we observe that 
\begin{align*}
&H'(x_N)=2s \Ds (x_N^{2s-1}\ov \eta)(x_N)+  \Ds (x_N^{2s}\ov \eta')(x_N)-a_s\ov\eta'(x_N)\\
&=2s \Ds_{\R_+} (x_N^{2s-1}\ov \eta)(x_N)+ a_s x_N^{-1}\ov\eta(x_N)+  \Ds (x_N^{2s}\ov \eta')(x_N)-a_s\ov\eta'(x_N).
\end{align*}
As a consequence, $x_N\mapsto H'(x_N)-a_s x_N^{-1}\in C^\infty([0,1])$.  It follows that for $x_N\in (0,1)$, we have  $H(x_N)=-\int_{x_N}^{1/2}H'(t)\, dt+H(1/2)=a_s\log x_N+\ti h(x_N)$, with  $\ti h\in C^\infty([0,1])$. In view of this,  \eqref{eq:L2}, \eqref{eq:L1} and \eqref{eq:L1L2}, we get the result.
\end{proof}

\subsection{Expansion near the boundary}
%
In the remaining of this section, we define  $\calB_\Phi:H^s(\R^N_+)\times H^s(\R^N_+)\to \R$ by 
$$
  \calB_\Phi(u,\vp):=  \frac{c_{N,s}}{2} \int_{\R^N_+\times \R^N_+}\frac{(u(x)-u(y))(\vp(x)-\vp(y))}{|\Phi(x)-\Phi(y)|^{N+2s}}\, dxdy.
$$
We start recalling the following result from \cite{Fall-regional}.

\begin{lemma}[\cite{Fall-regional}]\label{lemm:appr-esrtim-Dir}
Let $2s>1$.
Let $f\in L^p(B_2^+)$  and $u\in H^s_0(\R^N_+)$ by a solution to 
$$
  \calB_\Phi(u,\vp)= \int_{\R^N_+} f \vp\, dx\qquad\textrm{ for all $\vp\in C^\infty_c(B_2^+)$}.
$$
  Then for all $p>N$, there exists $C=C(N,s,p )>0$ such that 
$$
\|u/x_N^{2s-1}\|_{C^{1-N/p}(B_1^+  )}\leq C \left(  \|u\|_{L ^2(\R^N_+)}+ \|f\|_{L^p(B_2^+)} \right).
$$
\end{lemma}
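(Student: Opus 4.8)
This lemma is established in \cite{Fall-regional}; the plan to recover it is a blow-up scheme parallel to the boundary regularity theory for the Dirichlet problem of the fractional Laplacian. First I would flatten the boundary: setting $v:=u\circ\Phi^{-1}$ and changing variables $\widetilde x=\Phi(x)$ (so the Jacobian is $1$), the identity defining $\calB_\Phi$ becomes $D_{\O}(v,\vp)=\int_{\O}(f\circ\Phi^{-1})\vp\,dx$ for $\O:=\Phi(\R^N_+)$, a $C^{2,\b}$ domain which near the origin is the subgraph of $\phi$; hence $v$ is a weak Dirichlet solution of $\Ds_{\O}v=f\circ\Phi^{-1}$ in $\Phi(B_2^+)$. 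For $x$ away from $\de\O$ the equation can be rewritten as $\Ds(1_{\O}v)(x)=f(x)+V(x)v(x)$ with $V$ smooth there (cf. Lemma \ref{lem:int-Scuader}), so the interior contribution is handled by the interior estimate $v\in C^{2s-N/p}_{\loc}(\O)$ (note $p>N$ gives $2s-N/p>2s-1>0$). Thus it suffices to estimate $v/\d^{2s-1}$ near an arbitrary boundary point, say $0$, the exponent $2s-1$ being the leading Dirichlet homogeneity at $\de\O$ (namely $\b_0=2s-1$ for $2s>1$, by Lemma \ref{analysis-beta}).

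The first substantial step is to prove $|v|\le C\d^{2s-1}$ near $\de\O$, with $C$ controlled by $\|u\|_{L^2}+\|f\|_{L^p}$, so that $\psi:=v/\d^{2s-1}\in L^\infty$. I would obtain this from the comparison principle for $\calB_\Phi$ using barriers of the form $x_N^{2s-1}\pm c\,x_N^{2s}$: the profile $x_N^{2s-1}$ is $\Ds_{\R_+}$-harmonic, and the correction $x_N^{2s}$ is precisely what absorbs the curvature-generated $\log(x_N)$ term in $\Ds_{\O}\d^{2s-1}$ --- the same mechanism as in Lemma \ref{lem:Dir-Op-bdr} and in the statement of Theorem \ref{thm-Dir}, and where the hypothesis $\de\O\in C^{2,\b}$ enters.

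Next I would prove the decay estimate
\[
\sup_{r>0}\,r^{-(2s-1)-\g}\,\big\|\,v-x_N^{2s-1}\ell_r(x')\,\big\|_{L^\infty(B_r^+)}\le C\big(\|u\|_{L^2(\R^N_+)}+\|f\|_{L^p(B_2^+)}\big),\qquad\g:=1-\tfrac Np,
\]
where $\ell_r(x')=b_r+a_r\cdot x'$ minimizes $\|v-x_N^{2s-1}\ell\|_{L^2(B_r^+)}$ among affine functions of $x'$, giving $\int_{B_r^+}\big(v-x_N^{2s-1}\ell_r\big)\,x_N^{2s-1}(1,x')\,dx=0$. This I would get by contradiction and compactness: if it fails along a sequence of solutions, rescale each at the optimal scale $r_n\to0$ and divide by the blow-up factor $\theta_n\to\infty$ to produce $v_n$ with $\|v_n\|_{L^\infty(B_1^+)}\ge c>0$, the orthogonality above on $B_1^+$, a uniform bound $|v_n(x)|\le C(1+|x|)^{(2s-1)+\g}$ with $(2s-1)+\g=2s-\tfrac Np<2s$, and equations $\calB_{\Phi_n}(\chi_R v_n,\vp)=\int G^n_R\vp\,dx$ with $\Phi_n\to\mathrm{id}$ and remainders $G^n_R$ converging appropriately --- the subtraction of $x_N^{2s-1}\ell_r(x')$ being legitimate because such functions are themselves flat-half-space Dirichlet solutions of $\Ds_{\R^N_+}\cdot=0$ (see Theorem \ref{th:Liouville-Neum}(ii)), so only the curvature error remains. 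By Step 1 and the basic small-exponent boundary H\"older estimate of \cite{Fall-regional}, $(v_n)$ is precompact in $C^0_{\loc}(\ov{\R^N_+})$, converging to some $v_\infty$; passing to the limit and using Remark \ref{rem-gen-to-ok}(i) together with the flat-half-space Dirichlet Liouville theorem (the $s>\tfrac12$ case, from \cite{Fall-regional} or Theorem \ref{th:Liouville-Neum}(ii)) gives $v_\infty(x)=(B+A\cdot x')x_N^{2s-1}$, and the orthogonality forces $B=A=0$, contradicting $\|v_\infty\|_{L^\infty(B_1^+)}\ge c$. Finally, a standard algebraic (Campanato-type) iteration upgrades the decay estimate to $\psi\in C^{\g}$ near each boundary point, which combined with the interior estimate yields $\|v/\d^{2s-1}\|_{C^{1-N/p}(B_1^+)}\le C(\|u\|_{L^2}+\|f\|_{L^p})$.

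The main obstacle is the compactness step: one must control the non-local tails created by the cutoffs $\chi_R$ (as in Remark \ref{rem-gen-to-ok} and the proof of Proposition \ref{prop-blow-up}) and, crucially, the discrepancy between the kernels $|\Phi_n(x)-\Phi_n(y)|^{-N-2s}$ and $|x-y|^{-N-2s}$ caused by the curved boundary, so that the limiting equation is the clean $\Ds_{\R^N_+}v_\infty=0$ with $v_\infty=0$ on $\de\R^N_+$; this, together with the matching Liouville classification, is exactly what \cite{Fall-regional} provides.
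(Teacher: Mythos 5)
The paper does not actually prove this lemma: it is imported verbatim from \cite{Fall-regional} (``We start recalling the following result from \cite{Fall-regional}''), so there is no in-paper argument to measure your proposal against; the honest comparison is with the strategy of \cite{Fall-regional} and with the analogous blow-up machinery the paper itself deploys in Section \ref{sec6}. Judged on those terms, your sketch is a reasonable reconstruction: flattening via $\Phi$, a barrier bound $|u|\leq C x_N^{2s-1}$, a contradiction/compactness decay estimate at rate $r^{2s-1+\gamma}$ with $\gamma=1-N/p$, the half-space Dirichlet Liouville classification for growth below $2s$ (which only leaves $B\,x_N^{2s-1}$, so your affine-in-$x'$ correction is harmless but not needed), and a Campanato iteration combined with rescaled interior estimates is exactly the kind of scheme that Proposition \ref{prop-blow-u-Dir} and Corollaries \ref{cor:near-bdr}--\ref{cor:u-odr-ds} run at the next order of regularity. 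Two caveats are worth making explicit. First, when you subtract only $x_N^{2s-1}\ell_{r_n}(x')$ in the blow-up, the error $\cL_\Phi\big(x_N^{2s-1}\ell_{r_n}\big)$ contains a curvature-generated $\log(x_N)$ term whose coefficient does \emph{not} vanish at the base point (this is precisely why the paper corrects with $x_N^{2s}$ in Lemma \ref{lem:Dir-Op-bdr} and Proposition \ref{prop-blow-u-Dir}); your argument still closes, but only because the target exponent satisfies $\gamma<1$, so after rescaling this term is of size $r_n^{1-\gamma}|\log r_n|/\theta_n\to0$ --- you should say this, since for $\gamma\geq 1$ (the paper's $C^{1+\alpha}$ setting) the uncorrected scheme would fail. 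Second, your barrier $x_N^{2s-1}\pm c\,x_N^{2s}$ needs care with signs, since $\Ds_{\R_+}(x_N^{2s}\bar\eta)\sim a_s\log x_N\to-\infty$ and the sign of the curvature term is not fixed; and the compactness step must invoke only the weaker small-exponent boundary estimate of \cite{Fall-regional} (or a uniform $C^{2s-1}$ bound on $u$), not the lemma itself, to avoid circularity --- as you set it up, this is consistent, but it is the place where the quantitative input of \cite{Fall-regional} is genuinely indispensable.
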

We state the next result which is crucial for the proof of Theorem \ref{th-2}.
\begin{proposition}\label{prop-blow-u-Dir}
Let  $s\in(\frac12,1)$,  $\alpha_s\in(0,1-s)$ be given by Lemma~\ref{analysis-beta} and $\b\in (0,\a_s)$. Let   $\a\in (0,\min(\b, 2s-1))$, with $2s+\a\not=1$,   $f\in C^\a(\R^N_+)$ and $w\in H^s_0(\R^N_+) $ such that   
$$
  \calB_\Phi(w,\vp)=\int_{\R^N_+} f \vp\, dx\qquad\textrm{ for all $\vp\in C^\infty_c(B_2^+)$}
$$
and 
$$
\|w/x_N^{2s-1}\|_{L^\infty(\R^N_+)}+\|f\|_{ C^\a(\R^N_+)} \leq 1.
$$
Let $\ov \eta\in C^{\infty}_c(-3,3)$ with $\ov \eta=1$ on $(-2,2)$.
Then,    there exists $C= C(N,s,\a,\b,\ov \eta)$ such that 
\be\label{eq:to-contra-bdr}
\sup_{r>0}r^{-1-\a }\left\|\frac{w}{x_N^{2s-1}} -\left( a_{r,w} \cdot x' +b_{r, {w}} (1-T_3(0)x_N) \right)    \ov\eta (x_N) \right\|_{L^\infty(B_r^+)}\leq C ,
\ee
where  $a_{r,w} :=A_{r, \,w/{x_N^{2s-1}} }$ is given by \eqref{eq:Aru},  
$$
b_{r, {w}}=\frac{1}{\int_{B_{r }^+}(1- T_3(0)y_N)^2dy  }\int_{B_{r}^+} \frac{w}{x_N^{2s-1}}(y) (1- T_3(0)y_N)dy,
$$
 and  $T_3(0)=-\D\phi(0) $ is given by \eqref{eq:C30-MC}.
\end{proposition}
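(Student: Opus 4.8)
The plan is to argue by contradiction with a blow-up and compactness argument following the scheme of Proposition~\ref{prop-blow-up}, the essential new ingredient being that the first-order competitor subtracted from $w/x_N^{2s-1}$ must be the \emph{curvature-corrected} profile $x_N^{2s-1}\big(a\cdot x'+b(1-T_3(0)x_N)\big)$ supplied by Lemma~\ref{lem:Dir-Op-bdr}. Suppose \eqref{eq:to-contra-bdr} fails; then there are $\phi_n$ (hence $\Phi_n$), $f_n$ and $w_n$ with $\|\phi_n\|_{C^{2,\b}(\R^{N-1})}\le\frac14$, $\phi_n(0)=|\n\phi_n(0)|=0$, $\|w_n/x_N^{2s-1}\|_{L^\infty(\R^N_+)}+\|f_n\|_{C^\a(\R^N_+)}\le1$, solving $\calB_{\Phi_n}(w_n,\vp)=\int_{\R^N_+}f_n\vp\,dx$, but violating the asserted decay. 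Writing $\psi_n:=w_n/x_N^{2s-1}$, Lemma~\ref{lemm:appr-esrtim-Dir} bounds $\psi_n$ in $C^{1-N/p}_{loc}(\ov{B_1^+})$ uniformly in $n$, whence a crude a priori bound $|a_{r,w_n}|+|b_{r,w_n}|\le Cr^{-\varrho}$ with $\varrho>0$ as small as we wish. With this in hand, the usual monotonicity/dyadic selection (see e.g.\ \cite{RS16a}) produces $r_n\downarrow0$ and $\theta_n\uparrow\infty$ so that, setting
\[ v_n(y):=\frac{1}{r_n^{2s+\a}\theta_n}\Big(w_n(r_ny)-(r_ny_N)_+^{2s-1}\ov\eta(r_ny_N)\big[a_{r_n,w_n}\cdot(r_ny')+b_{r_n,w_n}(1-T_3^n(0)\,r_ny_N)\big]\Big) \]
and $V_n:=v_n/y_N^{2s-1}$, one has $\|V_n\|_{L^\infty(B_1^+)}\ge\frac14$, the growth control $\|V_n\|_{L^\infty(B_M^+)}\le CM^{1+\a}$ for all $M\ge1$, and the orthogonality relations $\int_{B_1^+}V_n(y)y_i\,dy=0$ for $i=1,\dots,N-1$ and $\int_{B_1^+}V_n(y)(1-T_3^n(0)r_ny_N)\,dy=0$, the last two coming from the minimality defining $a_{r_n,w_n}=A_{r_n,\psi_n}$ and $b_{r_n,w_n}$.

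Next I would derive the equation satisfied by $v_n$. Put $\ti\Phi_n(x):=\Phi_n(r_nx)/r_n$; since $\phi_n(0)=|\n\phi_n(0)|=0$ and $\|D^2\phi_n\|_{L^\infty}$ is bounded, $\ti\Phi_n\to\mathrm{id}$ in $C^1_{loc}$, so in particular $D\ti\Phi_n(0)\to I_N$. Applying (the $\Phi_n$-version of) Lemma~\ref{lem:Dir-Op-bdr} to evaluate $\calB_{\Phi_n}$ on the subtracted profile — which equals $g_{1,n}\log x_N+g_{2,n}$ with $g_{1,n}(0)=0$ and $\|g_{i,n}\|_{C^{\min(\a,\b)}}\le C(|a_{r_n,w_n}|+|b_{r_n,w_n}|)$ — and carrying out the nonlocal cutoff-tail bookkeeping exactly as in Proposition~\ref{prop-blow-up} (the $F_R^n$ and $\cR_R^n$ terms), one obtains
\[ \calB_{\ti\Phi_n}(\chi_Rv_n,\vp)=\int_{\R^N_+}G_R^n\,\vp\,dx\qquad\textrm{for all }\vp\in C^\infty_c(B_{R/2}^+), \]
with $G_R^n=H_R^n+c_R^n+\cR_R^n$, where $c_R^n$ is bounded in $n$, $\cR_R^n(0)=0$ and $|\n\cR_R^n|\le CR^{\a-1}$, while $H_R^n\to0$ uniformly on $B_{R/2}^+$. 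For this last point one uses that after rescaling and normalization the $f_n$-contribution is $O(1/\theta_n)$, and the logarithmic contribution is $\tfrac{1}{r_n^\a\theta_n}\,g_{1,n}(r_n\cdot)\log(r_n\,\cdot)$; since $g_{1,n}(0)=0$ and $g_{1,n}\in C^{\min(\a,\b)}$, this has modulus $\le\e_n\,|y|^{\min(\a,\b)}(1+|\log y_N|)$ with $\e_n\to0$. This is exactly the step for which the correction $-T_3^n(0)x_N$ is indispensable: it is what forces $g_{1,n}(0)=0$, hence removes the part of the source that is not bounded (recall that $\Ds_\O\d^{2s-1}\asymp(N-1)H_{\de\O}\log\d$ near $\de\O$).

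Then I would pass to the limit. Applying Lemma~\ref{lemm:appr-esrtim-Dir} to $v_n$ bounds $V_n$ in $C^{1-N/p}_{loc}(\ov{B_1^+})$, so along a subsequence $V_n\to V$ locally uniformly and $T_3^n(0)\to\tau\in\R$; hence $v_n\to v:=V\,y_N^{2s-1}\in H^s_0(\R^N_+)$ (recall $x_N^{2s-1}\in H^s_{loc}$ for $s>\frac12$). Passing to the limit in the displayed identity, using $\ti\Phi_n\to\mathrm{id}$ and $T_3^n(0)r_n\to0$ in the orthogonality relations, one gets $|v(x)|\le C(1+|x|^{2s+\a})$, $\Ds_{\R^N_+}v\stackrel{\a}{=}0$ in $\R^N_+$ with zero Dirichlet data on $\de\R^N_+$, $\|V\|_{L^\infty(B_1^+)}\ge\frac14$, $\int_{B_1^+}V(y)y_i\,dy=0$ for $i<N$, and $\int_{B_1^+}V(y)\,dy=0$. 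By Theorem~\ref{th:Liouville-Neum}(ii), $v=(B_0+A_0\cdot x')x_N^{2s-1}$, i.e.\ $V=B_0+A_0\cdot x'$; the first family of orthogonality relations forces $A_0=0$, and then $\int_{B_1^+}V=0$ forces $B_0=0$, so $V\equiv0$, contradicting $\|V\|_{L^\infty(B_1^+)}\ge\frac14$. Finally, if the crude slope bound only allowed $\varrho>0$, an algebraic iteration over dyadic annuli, exactly as in Corollary~\ref{cor-blow-up}, upgrades the decay exponent from $r^{-1-\a+\varrho}$ to the sharp $r^{-1-\a}$.

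I expect the main obstacle to be the control of the right-hand side of the blow-up equation: unlike the Neumann case, subtracting an affine multiple of $x_N^{2s-1}$ does not suffice, and one must exploit the explicit computation of Lemma~\ref{lem:Dir-Op-bdr} for the curvature-corrected profile so that the unavoidable $\log(x_N)$ remainder has a coefficient vanishing at the boundary; tracking the nonlocal tails so that the limit problem is genuinely a solution in the $\stackrel{\a}{=}$ sense (the solutions here grow faster than $2s$) is the other delicate point.
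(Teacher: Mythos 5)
Your proposal is correct and follows essentially the same route as the paper: a contradiction blow-up at the scales selected by the monotone quantity $\th_n(\rho)$, with the curvature-corrected profile $x_N^{2s-1}\big(a\cdot x'+b(1-T_3(0)x_N)\big)$ handled via Lemma \ref{lem:Dir-Op-bdr} (the point $g_{1,n}(0)=0$ taming the $\log x_N$ source), the same cutoff/tail bookkeeping as in Proposition \ref{prop-blow-up}, compactness from Lemma \ref{lemm:appr-esrtim-Dir}, and the Liouville theorem plus the two orthogonality relations to reach the contradiction. The only inessential deviations are that your growth bound $\|V_n\|_{L^\infty(B_M^+)}\leq CM^{1+\a}$ should carry the $\varrho$-loss when $Mr_n>1$ (harmless since $\a+\varrho<\b<\a_s$), and the closing remark about iterating to remove $\varrho$ is not needed here, since the contradiction hypothesis is already stated at the exponent $1+\a$.
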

\begin{proof}
Suppose on the contrary  that \eqref{eq:to-contra-bdr} does not hold.  Then for all integer $n\geq 2$, there exist  $\phi_n\in C^{2,\b}(\R^{N-1})$, $f_n \in C^\a(\R^N_+)$ and $w_n \in H^{s}_0(\R^N_+) $,  such that 
\be\label{eq:psi_n-c0}
\|\phi_n\|_{C^{2,\b}(\R^{N-1})}\leq \frac{1}{4} ,\qquad\phi_n(0)=|\n \phi_n(0)|=0,
\ee
\be \label{eq:w_nf_n}
\|w_n/x_N^{2s-1}\|_{L^\infty(\R^N_+)}+\|f_n\|_{ C^\a(\R^N_+)}\leq 1
\ee
with
\be\label{eq:w_nvp} 
  \calB_\Phi(w_n,\vp)=\int_{\R^N_+} f_n \vp\, dx\qquad\textrm{ for all $\vp\in C^\infty_c(B_2^+)$}.
\ee
and
\be \label{eq:absurd}
 \sup_{r>0}r^{-1-\a }\left\|\frac{w_n}{x_N^{2s-1}} - \left( a_{r,w_n} \cdot x' +b_{r, {w_n}} (1-T_3^n(0)x_N) \right)  \ov\eta(x_N) \right\|_{L^\infty(B_r^+)}>n,
\ee
where (recalling \eqref{eq:C30-MC})
\be\label{eq:C30-MC-n}
T_3^n(0):= - \D\phi_n(0) .
\ee
Define 
$$
Q_r(x)= a_{r,w_n} \cdot x' +b_{r, {w_n}} (1-T_3^n(0)x_N) .
$$
By \eqref{eq:w_nf_n},  we can define the monotone nonincrease sequence of function $\th_n$ by
\be\label{eq:def-th-n}
\th_n(\rho):=\sup_{r>\rho}r^{-1-\a}\left\|\frac{w_n}{x_N^{2s-1}} - Q_r(x)  \ov\eta(x_N) \right\|_{L^\infty(B_r^+)}. 
\ee
Obviously, for $n\geq 2$,  by \eqref{eq:absurd},   there exists $\rho_n>0$ such that 
\be\label{eq:The-n-geq-n}
  \th_n(\rho_n)> n/2\geq 1 .
\ee
Hence,  provided $n\geq 2$,  by definition of $\th_n(\rho_n)$ as a supremum and the monotonicity of $\th_n$,  there exists $r_n\geq \rho_n $ such that 
\begin{align}
\th_n( r_n)&\geq   r^{-1-\a}_n\left\|\frac{w_n}{x_N^{2s-1}} - Q_{r_n}(x)  \ov\eta(x_N) \right\|_{L^\infty(B_{r_n}^+)} \nonumber\\
& \geq \th_n( \rho_n)-1/2\geq (1-1/2)\th_n(\rho_n)\geq \frac{1}{2}\th_n( r_n),
\label{eq:th_n-use}
\end{align}
Also by \eqref{eq:The-n-geq-n} and the above estimate,  we have   that $\th_n(r_n)\geq \frac{n}{2}-\frac{1}{2}\to \infty$ as $n\to \infty.$ 
We now define $v_n\in H^s_0(\R^N_+)$ by
$$
v_n(x):=\frac{1}{r_n^{2s+\a } \th_n(r_n)}\big(  w_n(r_nx) -Q_{r_n}(r_n x)\ov\eta(r_n x_N)  (r_nx_N)^{2s-1}  \big).
$$
In view of \eqref{eq:th_n-use},  it clearly satisfies 
\be\label{eq:to-contradict-Dir}
\|v_n/x_N^{2s-1}\|_{L^\infty(B_1^+)}\geq \frac{1}{2} .
\ee
Moreover,  by construction (recalling \eqref{eq:perp-Aru}), 
\be\label{eq:perp-Dir}
\int_{B_{1}^+} \frac{v_n}{x_N^{2s-1}}(y)  (1-T_3^n(0)y_N)dy=0, \qquad\int_{B_{1}^+}\frac{v_n}{x_N^{2s-1}}(y) y_i\, dy=0 \qquad\textrm{ for all $i=1,\dots,N-1$.} 
\ee
Note that by Lemma \ref{lemm:appr-esrtim-Dir},  \eqref{eq:w_nvp}  and \eqref{eq:w_nf_n},
\be\label{eq:ap-est}
 |a_{r,w_n}|\leq c_0 r ^{-\varrho},  \qquad\textrm{for all $\varrho\in (0,\b-\a)$.}
\ee
By \eqref{eq:psi_n-c0} and \eqref{eq:C30-MC-n},  for $M\leq 1$,    we have 
\begin{align}\label{eq:Q2rQr}
\| Q_{2r}-Q_r\|_{L ^2(B_M)}^2\asymp  |B_M| \left(M^2 |a_{2r,w_n}-a_{r,w_n}|^2+  |b_{2r,w_n}-b_{r,w_n}|^2  \right).
\end{align}
Therefore  by \eqref{eq:def-th-n},  for $2r\leq 1$,
\begin{align*}
2r |a_{2r,w_n}-a_{r,w_n}|&+ |b_{2r,w_n}-b_{r,w_n}|\\
&\leq C\left\| {w_n}/{x_N^{2s-1}} - Q_{2r} \right\|_{L^\infty(B_{2r}^+)}+C \left\| {w_n}/{x_N^{2s-1}} -Q_r \right\|_{L ^\infty(B_r^+)} \\
&\leq C \left( \th_n(2r) (2r)^{1+\a }+ \th_n(r) r^{1+\a}\right)\leq C \th_n(r) r^{1+\a }.
\end{align*}
Using the monotonicity  of  $\th_n$,  we see that for all $m\in \N$ such that $2^m r\leq 1$, we get 
\begin{align*}
&\| Q_{2^m r}-Q_r\|_{L ^\infty(B_{2^m r})}\leq C \| Q_{2^m r}-Q_r\|_{L ^2(B_{2^m r})} \leq  C\sum_{i=1}^m\| Q_{2^{i} r}-Q_{2^{i-1}}\|_{L ^2(B_{2^i r})}\\
&\leq C \sum_{i=1}^m 2^ir |a_{2^ir,w_n}-a_{2^{i-1}r,w_n}|+ |b_{2^ir,w_n}-b_{2^{i-1}r,w_n}|\leq C \th_n(r)\sum_{i=1}^m2^{i(1+\b )}  r^{1+\a}\\
&\leq C  \th_n(r) (2^mr)^{1+\a }.
\end{align*}
Hence for all $M\geq 1$ such that $Mr\leq 1$, we get 
$$
\| Q_{M r}-Q_r\|_{L ^\infty(B_{M r})}\leq  \th_n(r) (M r)^{1+\a }.
$$
From this,  \eqref{eq:def-th-n} and the monotonicity  of  $\th_n$, we then have, for $Mr\leq 1$,  
\begin{align*}
&\left\| {v_n}/{x_N^{2s-1}}  \right\|_{L^\infty(B_{M}^+)}=\frac{1}{r_n^{1+\a }\th_n(r_n)}\| {w_n}/{x_N^{2s-1}} -Q_{r_n} \|_{L^\infty(B_{M r_n }^+)}\\
&+\frac{1}{r_n^{1+\a }\th_n(r_n)}\| {w_n}/{x_N^{2s-1}} -Q_{M r_n} \|_{L^\infty(B_{Mr_n }^+)}\ +\frac{1}{r_n^{1+\a }\th_n(r_n)}\| Q_{M r_n}-Q_{r_n}\|_{L ^\infty(B_{M r_n})}\\
&\leq  C M^{1+\a }.
\end{align*}
As a consequence,
\be\label{eq:v_n-growth-1-Dir}
\|v_n\|_{L^\infty(B_M^+)}\leq  C M^{2s+\a} \qquad\textrm{ whenever $M r_n\leq 1$.}
\ee
Now  for $M r_n> 1$ and using  \eqref{eq:psi_n-c0},  we  have  (recall that   $2s+\a>1$)
$$
\|v_n\|_{L^\infty(B_M^+)}\leq \frac{C }{r_n^{2s+\a } \th_n(r_n)}(1+ r_n^{-\varrho}+ M r_n+|T_3^n(0)| M r_n)\leq  C M^{2s+\a+\varrho}.
$$
In view of this and \eqref{eq:v_n-growth-1-Dir}  we deduce that
\be\label{eq:v_n-growth-Dir}
\|v_n\|_{L^\infty(B_M^+)}\leq  C M^{2s+\b} \qquad\textrm{ for all $M \geq 1$}.
\ee
Letting  $\ti\Phi_n(x)=\frac{1}{r_n}\Phi(r_n x)$, we thus get 
\be\label{eq:final-eq-Dir} 
  \calB_{\ti \Phi_n}(\chi_Rv_n,\vp)=\int_{\R^N_+} G^n_R \vp\, dx\qquad\textrm{ for all $\vp\in C^\infty_c(B_{\frac{1}{r_n}}^+)$},
\ee
where 
$$
G^n_R(x):= \frac{1}{r_n^{\b}\th_n(r_n)} f_n(r_n x)+ c_{N,s} \int_{\R^N_+}\frac{(\chi_R(x)-\chi_R(y)) v_n(y)}{|\ti\Phi_n(x)-\ti\Phi_n(y)|^{N+2s}} \, dy  - \frac{1 }{r_n^{\b}\th_n(r_n)} (  \cL_{\Phi_n}   P_n)(r_n x),
$$
$$
P_n(x)=  \ov\eta(x_N) \left( a_{r_n,w_n} \cdot x' +b_{r_n, {w_n}} (1-T_3^n(0)x_N) \right) x_N^{2s-1}
$$
and  $ (\cL_{\Phi_n}   P_n)(x)=c_{N,s} \int_{\R^N_+}\frac{P_n(x)-P_n(y) }{|\Phi_n(x)-\Phi_n(y)|^{N+2s}} \, dy$.
By Lemma \ref{lem:Dir-Op-bdr},  
\be\label{eq:h1h2}
 (\cL_{\Phi_n}   P_n )(r_n x)=g_{1,n}(r_n x)\log(r_n x_N)+ g_{2,n}(r_n x)
\ee
and thus using \eqref{eq:ap-est} we see that 
\begin{align}\label{eq:cL-Pn}
\frac{1 }{r_n^{\b}\th_n (r_n)}|( \cL_{\Phi_n}   P_n)(r_n x)-( g_{2,n}(r_n x)- g_{2,n}(0)) |&\leq \frac{C r_n^\b |x|^\b}{r_n^{\a}\th_n(r_n)}\left( 1+|\log x_N|+|\log r_n|\right) (r_n^{-\varrho}+1) \nonumber\\
&\leq \frac{C|x|^\b}{\th_n(r_n)}\left(1+ |\log x_N|\right).
\end{align}
Since $|\n \phi_n|\leq \frac{1}{4}$ and $\ti \Phi_n(x)=x+(0,r_n^{-1}\phi_n(r_n x))$,    we obtain
\begin{align} \label{eq:bi-Li-Dir}
2|x-y|\geq  | \ti\Phi_n(x)-\ti\Phi_n(y) |\geq \frac{1}{2}|x-y|, \qquad |D\ti\Phi_n(x)| \leq 2  \qquad\textrm{ for all $x,y\in \R^N$. }
\end{align}
  Define, for $x\in B_{  R/2}$, 
\begin{align*}
F_R^n(x)&:=c_{N,s}\int_{\R^N_+}\frac{(\chi_R(x)-\chi_R(y)) v_n(y)}{|\ti\Phi_n(x)-\ti\Phi_n(y)|^{N+2s}}  \, dy\\
&= c_{N,s}\int_{\R^N_+\cap \{|y|\geq R\}}\frac{(1-\chi_R(y)) v_n(y)}{|\ti\Phi_n(x)-\ti\Phi_n(y)|^{N+2s}}  \, dy.
\end{align*}
By the mean value theorem and the fact that $\ti\Phi_n(0)=0$, we have  $F_R^n(x)=F_R^n(0)+\cR_R^n(x)$,  where 
$$
 \cR_R^n(x):=-(N+2s)c_{N,s}\int_0^1 \int_{\R^N_+ }\frac{(\ti\Phi_n(x)-\ti\Phi_n(y))  \cdot \ti\Phi_n(x)}{|\ti\Phi_n(x)-\ti\Phi_n(y)|} \frac{(1-\chi_R(y)) v_n(y)}{| t \ti\Phi_n(x)-\ti\Phi_n(y)|^{N+2s+1}}  \, dy\, dt.
$$
Note that by \eqref{eq:bi-Li-Dir},   $| t \ti\Phi_n(x)-\ti\Phi_n(y)|\geq \frac{1}{4} |y|$ whenever  $|y|\geq R$ and $|x|\leq \frac{R}{4}$.
Therefore, 
by \eqref{eq:v_n-growth} and \eqref{eq:bi-Li-Dir},    there exists $C>0$ such that
\be\label{eq:cRnR-Dir}
 | \n \cR_R^n(x) |\leq {C}  R^{\b-1} \qquad\textrm{ for all $R>1$,  $|x|\leq \frac{R}{4}$ and $n\geq 2$}
\ee 
and  by construction we have  $\cR_R^n(0)=0$.
We can thus write
\be\label{def-GnR-Dir}
G^n_R(x)= H^n_R(x)+ c^n_R+\cR^n_R(x), 
\ee
with  (recalling \eqref{eq:h1h2})
$$
H^n_R(x)= \frac{1}{r_n^{\a}\th_n(r_n)}\left\{  f_n(r_n x)- f_n(0)-(\cL_{\Phi_n} P_n)(r_nx)+( g_{2,n}(r_n x)- g_{2,n}(0)) \right\}  
$$
and 
$$
c^n_R= \frac{1}{r_n^{\a}\th_n(r_n)}\left(    f_n(0)- g_{2,n}(0)  \right)  + F_R^n(0).
$$
Recalling that $\|f_n\|_{C^\a(\R^N_+)}\leq 1$,   we then get from \eqref{eq:cL-Pn}  that 
$$
|H^n_R(x)|\leq   \frac{C( R)} {\th_n(r_n)} (1+|\log x_N|)\qquad\textrm{ for all $x\in B_{R/2}^+$,} 
$$
so that 
\be\label{def-HnR-Dir}
\|H^n_R\|_{L^p(B_{R/2})}\leq   \frac{C( R)} {\th_n(r_n)}  \qquad\textrm{ for all $p>1$.} 
\ee
Let us now show  that $(c_R^n)_{n\in \N}$ is bounded.  For this, we pick $\phi\in C^\infty_c(  B_{1/4}^+)$ with $\int_{\R^N}\phi\, dx=1$ and let $R\geq2$.
  Then  multiply \eqref{eq:final-eq-Dir} by $\phi$ and integrate over $\R^N_+$ to get
\begin{align}\label{eq:cnR-Dir}
|c^n_R|\leq \int_{ \R^N}| \chi_R(x) v_n(x)| |\cL_{\ti\Phi_n}\phi(x)|\, dx+  \int_{ B_{R/2}}| H^n_R(x) \phi(x)|dx+ \int_{ B_{R/2}}| \cR^n_R(x) \phi(x)|dx\leq C(R).
\end{align}

In view of this,  \eqref{eq:final-eq-Dir},  \eqref{def-GnR-Dir}, \eqref{def-HnR-Dir} and  \eqref{eq:cRnR-Dir}, we can apply Lemma \ref{lemm:appr-esrtim-Dir}, to deduce that
$v_n$ is bounded in $C^{2s-1}_{loc}(\ov {\R^N_+})\cap H^{s}_{loc}(\ov {\R^N_+})$ and converges to some $v\in C^{2s-1}_{loc}(\ov {\R^N_+})\cap H^{s}_{loc}(\ov {\R^N_+})$.   Moreover  $v_n/x_N^{2s-1}$ converges to $v/x_N^{2s-1}$  in $C^{1-\e}_{loc}(\ov {\R^N_+})$,  for all $\e\in (0,1)$.
In addition passing to the limit in \eqref{eq:to-contradict-Dir}, \eqref{eq:v_n-growth-Dir} and \eqref{eq:perp-Dir}, we get 
\be\label{eq:v-growth}
  |v(x)|\leq C  |x|^{2s+\b} \qquad\textrm{ for all $x\in \R^N$,}  
\ee
\be\label{eq:v-contradiction-Dir}
\|v/x^{2s-1}_N\|_{L^\infty(B_1^+)}\geq \frac{1}{2}, \qquad   
   \int_{B_{1}^+} \frac{v}{x_N^{2s-1}}(1-T_3^\infty(0)x_N) \, dx= \int_{B_1^+} \frac{v}{x_N^{2s-1}} x_i\, dx=0,   
\ee
 { for $i=1,\dots,N-1$,} where $T_3^\infty(0)$ is the  limit   $T_3^n(0)$  as $n\to \infty$ and satisfies $|T_3^\infty(0)|\leq 1/4$. 
Passing to the limit in \eqref{eq:final-eq-Dir},  \eqref{eq:cRnR-Dir} and using \eqref{eq:cnR-Dir},  we get,  for all $\phi\in C^\infty_c(B_{\frac{R }{4}})$,
$$
\int_{\R^{2N}_+}\frac{((\eta_R v)(x)-(\eta_Rv)(y))(\vp(x)-\vp(y))}{|x-y|^{N+2s}}  dydx=\int_{\R^N_+}(c_R^\infty+\cR^\infty_R(x))\vp(x)\, dx
$$
  and 
\be \label{eq:cR-cR}
c^\infty_R\in \R, \qquad   \cR_R^\infty(0) =0, \qquad | \n \cR_R^\infty(x) |\leq {C}  R^{\b-1} \qquad\textrm{ for all $R\geq 2$ and  $x\in B_{\frac{R }{2}}$.}
\ee
Consequently,  since $\b<\a_s<1$, 
$$
\Ds_{{\R^N_+}} v \stackrel{\b}{=}0 \qquad\textrm{ in $\R^N_+$}.
$$
Applying  Theorem \ref{lem:from-ND-1D} and using \eqref{eq:v-growth},  we deduce that  $\frac{v}{x_N^{2s-1}}(x)=b+ a\cdot x'$. This is in contradiction with \eqref{eq:v-contradiction-Dir}. 
\end{proof} 

\begin{corollary}\label{cor:near-bdr}
Under the assumptions of Proposition \ref{prop-blow-u-Dir},  there exist    $a\in \R^{N-1} $ and $b\in \R$ with  $|a|+|b|\leq C$ such that letting $\psi=\frac{w}{x_N^{2s-1}},$ we get  
\be\label{eq:dec-zero-ok}
 \left\|\psi-  \left( (a\cdot x'+b)- b T_3(0)  x_N\right)  \right\|_{L^\infty(B_r^+)}\leq C r^{1+\a } \qquad\textrm{ for all $r>0$.}
\ee
Moreover $a=\n_{x'} \psi (0)$,  $ b=\psi (0)$ and $\de_{x_N} \psi(0)=- b T_3(0)$.
\end{corollary}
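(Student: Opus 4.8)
The plan is to turn the oscillation bound \eqref{eq:to-contra-bdr} of Proposition \ref{prop-blow-u-Dir} into a genuine first-order expansion of $\psi=w/x_N^{2s-1}$ at the origin by a Campanato-type dyadic iteration on the coefficients $a_{r,w}$, $b_{r,w}$, and then to identify the limiting coefficients.

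First I would note that, since $\ov\eta\equiv1$ on $(-2,2)$, the cutoff is inactive inside $B_r^+$ for every $r\le 2$, so that for such $r$ the estimate \eqref{eq:to-contra-bdr} reads $\|\psi-Q_r\|_{L^\infty(B_r^+)}\le Cr^{1+\a}$ with $Q_r(x):=a_{r,w}\cdot x'+b_{r,w}(1-T_3(0)x_N)$. Applying this at scales $r$ and $2r$ (with $2r\le2$) and using $B_r^+\subset B_{2r}^+$, I obtain $\|Q_{2r}-Q_r\|_{L^\infty(B_r^+)}\le Cr^{1+\a}$. The difference $Q_{2r}-Q_r$ is again of the form $A\cdot x'+B(1-T_3(0)x_N)$, and since $|T_3(0)|\le\frac14$ the factor $1-T_3(0)x_N$ stays between $\frac34$ and $\frac54$ on $B_r^+$; hence, after rescaling $B_r^+\to B_1^+$, the elementary norm equivalence already used in \eqref{eq:Q2rQr} gives $\|A\cdot x'+B(1-T_3(0)x_N)\|_{L^\infty(B_r^+)}\asymp r|A|+|B|$ uniformly in $r\le2$ and in $T_3(0)$. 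This yields $|a_{2r,w}-a_{r,w}|\le Cr^{\a}$ and $|b_{2r,w}-b_{r,w}|\le Cr^{1+\a}$.

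Next I would take $r=2^{-k}$, $k\ge1$, and sum the resulting geometric series: the sequences $(a_{2^{-k},w})_k$ and $(b_{2^{-k},w})_k$ are Cauchy, with limits $a\in\R^{N-1}$ and $b\in\R$ satisfying $|a_{2^{-k},w}-a|\le C2^{-k\a}$ and $|b_{2^{-k},w}-b|\le C2^{-k(1+\a)}$; evaluating \eqref{eq:to-contra-bdr} at $r=1$ together with $\|\psi\|_{L^\infty(\R^N_+)}\le1$ bounds $|a_{1,w}|+|b_{1,w}|$, hence $|a|+|b|\le C$. To deduce \eqref{eq:dec-zero-ok} for $r\in(0,1]$, I would pick $k$ with $2^{-k-1}<r\le2^{-k}$ and split
\[
\|\psi-((a\cdot x'+b)-bT_3(0)x_N)\|_{L^\infty(B_r^+)}\le\|\psi-Q_{2^{-k}}\|_{L^\infty(B_{2^{-k}}^+)}+\|Q_{2^{-k}}-((a\cdot x'+b)-bT_3(0)x_N)\|_{L^\infty(B_{2^{-k}}^+)},
\]
bounding the first term by $C2^{-k(1+\a)}\le Cr^{1+\a}$ via \eqref{eq:to-contra-bdr} and the second by $C(2^{-k}|a_{2^{-k},w}-a|+|b_{2^{-k},w}-b|)\le C2^{-k(1+\a)}\le Cr^{1+\a}$; for $r\ge1$ the bound is immediate from $\psi\in L^\infty$ and $|a|+|b|\le C$.

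Finally, for the identification of $a$ and $b$: by Lemma \ref{lemm:appr-esrtim-Dir} the quotient $\psi$ is continuous up to $\de\R^N_+$ near $0$, so letting $x\to0$ in \eqref{eq:dec-zero-ok} forces $b=\psi(0)$; then \eqref{eq:dec-zero-ok} says $\psi(x)=\psi(0)+a\cdot x'-bT_3(0)x_N+O(|x|^{1+\a})$ near $0$, and since $1+\a>1$ this is exactly the first-order Taylor expansion of $\psi$ at $0$, whence $\n_{x'}\psi(0)=a$ and $\de_{x_N}\psi(0)=-bT_3(0)$. I do not anticipate a serious difficulty: the only points needing care are that the coefficient--norm equivalence is uniform in $T_3(0)$ (which is where $|T_3(0)|\le\frac14$ enters) and that the cutoff $\ov\eta$ is inactive at the scales used; everything else is the routine telescoping of a dyadic iteration.
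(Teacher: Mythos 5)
Your proposal is correct and follows essentially the same route as the paper: a Campanato-type telescoping of the coefficient differences $|a_{2r,w}-a_{r,w}|$, $|b_{2r,w}-b_{r,w}|$ via the norm equivalence for the affine profiles (with $|T_3(0)|$ bounded), passage to the limiting coefficients, deduction of \eqref{eq:dec-zero-ok} at all scales, and identification of $a$, $b$ from the first-order expansion. The only difference is organizational (dyadic scales and a separate trivial case $r\geq1$, versus the paper's telescoping between arbitrary radii $\rho_2\leq\rho_1/4$), which changes nothing of substance.
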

\begin{proof}
Let $
Q_r(x)=  a_{r,w} \cdot x' +b_{r, {w}} (1-T_3(0)x_N) .
$
Let   $0<\rho_2\leq  \rho_1/4\leq 1/4 $.  Pick $k\in \N$ and   $\s\in [1/4,1/2]$ such that $\rho_2=\s^k\rho_1$.  Then provided $r\in (0,1)$,  by \eqref{eq:Q2rQr} and \eqref{eq:to-contra-bdr}  we get 
\begin{align*}
&\rho_1 |a_{\rho_2,w}-a_{\rho_1,w}|+ |b_{\rho_2,w}-b_{\rho_1,w}|\\
&\leq \sum_{i=0}^{k-1}\s^{i+1} \rho_1 |a_{\s^{i+1}\rho_1,w}- a_{\s^{i}\rho_1,w}|+\sum_{i=0}^{k-1}  |b_{\s^{i+1} \rho_1,w}-b_{\s^{i}\rho_1,w}|\\
& \leq C\frac{1}{|B_{\s^{i+1}\rho_1}|^{\frac{1}{2}}}\| Q_{\s^{i+1}\rho_1}-Q_{\s^ i \rho_1} \|_{L^2(B_{\s^{i+1} \rho_1})}\\
&\leq C   \sum_{i=0}^{k-1}  \left(  \|\psi -Q_{\s^i\rho_1 }  \|_{L^\infty\left(B_{\s^i\rho_1}\right)}+ \|\psi- Q_{\s^{i+1}\rho_1 }\|_{L^\infty\left(B_{\s^{i+1}\rho_1}\right)}  \right) \\
&\leq C  \sum_{i=0}^{k-1}  \left( (\s^{i}\rho_1)^{ 1+\a} +(\s^{i+1}\rho_1)^{ 1+\a}      \right) \leq C  \rho_1^{1+\a}  .
\end{align*}
This implies that the limits $a:=\lim_{r\to 0} a_{r,w}$ and $b:=\lim_{r\to 0} b_{r,w}$ exist.  Moreover, letting $\rho_2\to 0$ and $\rho_1=1/2$ in the above estimate,  we get  $|a|+|b|\leq C$.   Hence  \eqref{eq:dec-zero-ok} follows. \\
It is by now easy to see from \eqref{eq:dec-zero-ok},   that $b=\psi(0)$ and $\n \psi(0) =(a,-bT_3(0))$.
\end{proof}
\begin{corollary}\label{cor:u-odr-ds}
Under the hypothesis of Proposition \ref{prop-blow-u-Dir}, there exists a constant $C$ depending only on $N,s,\a$ and $\b $ such that 
for all ${\x\in B_{1}^+\cap \R  e_N} $, 
$$
  [ w/x_N^{2s-1}]_{C^{1+\a}({B_{\x_N/2}(\x)})}\leq  C .
$$

\end{corollary}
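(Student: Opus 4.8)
The plan is a rescaling argument coupling the pointwise boundary expansion of Corollary~\ref{cor:near-bdr} with the interior Schauder estimate Lemma~\ref{lem:int-Scuader}. Fix $\xi=\rho e_N$ with $\rho:=\xi_N\in(0,1)$ (for $\rho$ bounded below this is plain interior regularity, so the content is $\rho$ small). By Corollary~\ref{cor:near-bdr} there are $a\in\R^{N-1}$, $b\in\R$ with $|a|+|b|\le C$ such that $\psi:=w/x_N^{2s-1}$ satisfies $\|\psi-Q\|_{L^\infty(B_r^+)}\le C r^{1+\alpha}$ for all $r>0$, with $Q(x)=a\cdot x'+b-bT_3(0)x_N$. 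Let $P$ be the profile of Lemma~\ref{lem:Dir-Op-bdr} built from $\ell(x')=a\cdot x'+b$, so that $P(x)=(x_N)_+^{2s-1}\overline{\eta}(x_N)Q(x)$ on $B_1^+$. Set
\[
\overline v(y):=\rho^{-2s}\big(w(\rho y)-P(\rho y)\big),\qquad \Phi_\rho(y):=\rho^{-1}\Phi(\rho y)=\big(y',\,y_N+\rho^{-1}\phi(\rho y')\big),
\]
and note $\|\rho^{-1}\phi(\rho\,\cdot)\|_{C^{2,\beta}(\R^{N-1})}\le\tfrac14$, so $\Phi_\rho$ belongs to the same class as $\Phi$ with bounds uniform in $\rho$.

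First I would collect the elementary properties of $\overline v$. On $B_2^+$ one has $\overline{\eta}(\rho y_N)\equiv1$, so $\overline v(y)=\rho^{-1}y_N^{2s-1}\big(\psi(\rho y)-Q(\rho y)\big)$, and the decay estimate gives $\|\overline v\|_{L^\infty(B_2^+)}\le C\rho^{\alpha}$; splitting into $|y|\le\rho^{-1}$ and $|y|>\rho^{-1}$ (using Corollary~\ref{cor:near-bdr} with $r=\rho|y|$ in the first range and $\|\psi\|_{L^\infty}\le1$ in the second) yields $|\overline v(y)|\le C(1+|y|)^{2s+\alpha}$ and $C_0:=\int_{\R^N_+}|\overline v(y)|(1+|y|)^{-N-2s-1}\,dy\le C\rho^{\alpha}$. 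Next, rescaling the weak formulation $\calB_\Phi(w,\varphi)=\int f\varphi$ by the factor $\rho$ and using the scaling identity $\cL_{\Phi_\rho}[P(\rho\,\cdot)]=\rho^{2s}(\cL_\Phi P)(\rho\,\cdot)$ together with Lemma~\ref{lem:Dir-Op-bdr}, the function $\overline v$ solves
\[
\calB_{\Phi_\rho}(\overline v,\varphi)=\int_{\R^N_+}\overline F_\rho\,\varphi\,dx\quad\text{for }\varphi\in C^\infty_c(B_{2/\rho}^+),\qquad \overline F_\rho(y)=f(\rho y)-g_1(\rho y)\log(\rho y_N)-g_2(\rho y),
\]
where $g_i\in C^{\alpha'}(\overline{B_1^+})$ for some $\alpha'\in(\alpha,\min\{\beta,2s-1\})$, $\|g_i\|_{C^{\alpha'}}\le C$ and $g_1(0)=0$ (the surplus Hölder exponent $\alpha'>\alpha$ is precisely what Lemma~\ref{lem:Dir-Op-bdr} provides, since $\alpha<\min\{\beta,2s-1\}$).

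The main obstacle is to control $\overline F_\rho$ on the fixed interior ball $B_{2/3}(e_N)\subset\subset\R^N_+$ uniformly in $\rho$, because of the logarithmic term. There $y_N\asymp1$, so $\log(\rho y_N)=\log\rho+\log y_N$; the $\log y_N$ piece is harmless, and for the $\log\rho$ piece the point is that $[g_1(\rho\,\cdot)]_{C^\alpha(B_{2/3}(e_N))}\le C\rho^{\alpha'}$ and $\|g_1(\rho\,\cdot)\|_{L^\infty(B_{2/3}(e_N))}\le C\rho^{\alpha'}$ (using $g_1(0)=0$), so that $\rho^{\alpha'}|\log\rho|\le C\rho^{\alpha}$ absorbs the logarithm; combined with the trivial estimates for the $f$- and $g_2$-parts this yields $\|\overline F_\rho-\overline F_\rho(e_N)\|_{C^\alpha(B_{2/3}(e_N))}\le C\rho^{\alpha}$. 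I would then change variables by $\Phi_\rho$, so that $u:=\overline v\circ\Phi_\rho^{-1}$ solves $\Ds_{\Phi_\rho(\R^N_+)}u=\overline F_\rho\circ\Phi_\rho^{-1}$ in a ball of fixed size compactly contained in $\Phi_\rho(\R^N_+)$; applying Lemma~\ref{lem:int-Scuader} (after a harmless translation and dilation) and transforming back, using that $\Phi_\rho\in C^{2,\beta}$ with uniform norms and $C^{2,\beta}\subset C^{2s+\alpha}$, gives
\[
\|\overline v\|_{C^{2s+\alpha}(B_{1/2}(e_N))}\le C\big(C_0+\|\overline v\|_{L^2(B_{2/3}(e_N))}+\|\overline F_\rho-\overline F_\rho(e_N)\|_{C^\alpha(B_{2/3}(e_N))}\big)\le C\rho^{\alpha}.
\]

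Finally I would unwind the scaling. On $B_{1/2}(e_N)$ we have $\psi(\rho y)-Q(\rho y)=\rho\,y_N^{1-2s}\overline v(y)$ with $Q(\rho\,\cdot)$ affine in $y$; hence, using $2s+\alpha>1+\alpha$,
\[
[\psi(\rho\,\cdot)]_{C^{1+\alpha}(B_{1/2}(e_N))}=\big[\rho\,y_N^{1-2s}\overline v\big]_{C^{1+\alpha}(B_{1/2}(e_N))}\le C\rho\,\|\overline v\|_{C^{2s+\alpha}(B_{1/2}(e_N))}\le C\rho^{1+\alpha},
\]
and since the $C^{1+\alpha}$ seminorm rescales by exactly the factor $\rho^{-1-\alpha}$ we obtain $[w/x_N^{2s-1}]_{C^{1+\alpha}(B_{\rho/2}(\rho e_N))}\le C$, which is the claim. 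I expect the only genuinely delicate points to be the uniform treatment of the $\log\rho$ term just described (which hinges on $\alpha'>\alpha$) and the bookkeeping needed to pass the interior Schauder estimate through the change of variables $\Phi_\rho$ while keeping every constant independent of $\rho$.
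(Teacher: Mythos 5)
Your proposal is correct and follows essentially the same path as the paper's proof: subtract the profile $P=\ov\eta\,(x_N)_+^{2s-1}Q$ built from Corollary \ref{cor:near-bdr}, rescale to unit distance from the boundary, absorb the logarithmic term from Lemma \ref{lem:Dir-Op-bdr} using the H\"older gain $\a'>\a$ together with $g_1(0)=0$, control the nonlocal tail via the decay $|w-P|\lesssim |x|^{2s+\a}$, apply the interior Schauder estimate Lemma \ref{lem:int-Scuader} through the change of variables $\Phi_\rho$, and then divide by $x_N^{2s-1}$ and scale back. The only differences (normalizing by $\rho^{-2s}$, dilating about the origin instead of translating $\x$ to the origin, and performing the division by $x_N^{2s-1}$ in the rescaled rather than the original variables) are cosmetic.
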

\begin{proof}
Let $\x:=(0,2\rho ) \in B_{1}^+$ with $\rho<1/2$.  We define
$$
P(x)=\ov \eta(x_N) (x_N)^{2s-1}_+ (a\cdot x'+b)- b T_3(0) \ov \eta(x_N) (x_N)^{2s}_+  ,
$$ 
where $b $ and $a$ are given by Corollary \ref{cor:near-bdr} and $T_3(0)$ is given by Lemma \ref{lem:Dir-Op-bdr}. Put 
 $ v_\x(y)=w(\x+\rho y) -P(\x+\rho y)  $,  $f_\x(y)= \rho^{2s}f(\x+\rho y)$.   We have 
\be \label{eq:Dir-near-final-J}
\calL_{\Phi_\rho} v_\x=      f_\x( x)+ \rho^{2s}(\calL_{\Phi} P)(\x+\rho x)  \qquad\textrm{  for all $\vp \in C^\infty_c(B_1^+)$,}
\ee
where  $\Phi_{\rho}(x)=\rho^{-1} \Phi(\rho x+\x)=(x',x_N+ \rho^{-1}\phi(\rho x'))$ and 
$$
\calL_{\Phi_\rho } v (x)=c_{N,s}\int_{\{y_N>-2\}} \frac{v(x)-v(y) }{|\Phi_\rho (x)-\Phi_\rho (y)|^{N+2s}}\, dy.
$$
By Lemma \ref{lem:Dir-Op-bdr}, we have 
\begin{align*}
F(x):=(\calL_{\Phi} P)(\x+\rho x)&=g_1(\x+\rho x)\log(2\rho+\rho x_N)+ g_2(\x+\rho x)
\end{align*}
from which it  follows  that
$\|F\|_{C^\b([0,1))}\leq C  \rho^\b|\log \rho|\leq C \rho^\a$.  Note that $|u_\x(x)|\leq C(1+ |x|^{2s+\a})$ and thus since $\Phi_\rho\in C^2(\R^N;\R^N)$ is a global (volume preserving) diffeomorphism,   by a change of variable and \eqref{eq:Dir-near-final-J}, we obtain
$$
\Ds_{\O_\rho}  v_\x\circ\Phi_\rho^{-1}  {=}  G_\x\circ\Phi_\rho^{-1} \qquad    \textrm{  in $\O_\rho$,}
$$
where  $G_\x(x):= f_\x( x)+ \rho^{2s} F(\x+\rho x)$ and $\O_\rho=\Phi_\rho^{-1} (\{y_N>-2\})$.  We note that
  $[G_\x]_{C^\a(B_{1/2})}\leq \rho^{2s+\a}$,  $\| v_\x\circ\Phi_\rho^{-1}\|_{L^\infty(B_{1/2})}\leq \rho^{2s+\a} $ and $B_1\subset\O_\rho$.  We then apply   Lemma \ref{lem:int-Scuader}  to deduce that 
\begin{align*}
\|v_\x\circ\Phi_\rho^{-1} \|_{C^{2s+\a}(B_{1/4})}&\leq C \left( \int_{\O_\rho\setminus B_{1/8}}|y|^{-N-2s-1}|v_\x\circ\Phi_\rho^{-1} (y)|dy+  \rho^{2s+\a}   \right)\\
& \leq C   \left( \int_{\R^N_+\setminus B_{1/16}}|x|^{-N-2s-1}|v_\x (x)|dx+  \rho^{2s+\a}   \right)\\
&\leq C   \left( \sum_{i=0}^\infty\int_{2^{i-3}>|x|>2^{i-4}}|x|^{-N-2s-1}|v_\x (x)|dx+  \rho^{2s+\a}   \right)\\
&\leq C   \left( \sum_{i=0}^\infty (2^i\rho)^{-N-2s-1} \rho^{2s+1} \int_{2^{i-3}\rho> |z|>2^{i-4}\rho} v_\x (z/\rho)|dz+  \rho^{2s+\a}   \right)\\
&\leq C   \left( \sum_{i=0}^\infty (2^i\rho)^{-N-2s-1} \rho^{2s+1}  \int_{2^{i}\rho> |x| }|w(x)-P(x)|dx+  \rho^{2s+\a}   \right)\\
&\leq C\left( \sum_{i=0}^\infty 2^{i{(\a-1)}}\rho^{2s+\a}+  \rho^{2s+\a}   \right)\leq C\rho^{2s+\a}  ,
\end{align*}
where we used \eqref{eq:dec-zero-ok}  which yields $|w(x)-P(x)|\leq C |x|^{2s+\a}$.  We conclude that
$$
\|v_\x \|_{C^{2s+\a}(B_{1/4})} \leq C\rho^{2s+\a} .
$$
Scaling and translating  back,  we then get 
$$
\| w-P\|_{L^\infty(B_{\rho/4}(\x))}\leq C\rho^{2s+\a},  \qquad  [ \n w-\n P]_{C^\a(B_{\rho/4}(\x))}\leq C\rho^{2s-1}.
$$
Combining the above estimates with 
 $$
 \|1/x_N^{2s-1}\|_{L^{\infty}(B_{\rho/4}(\x) )}\leq C\rho^{1-2s}, \qquad [\n( 1/x_N^{2s-1})]_{C^{\a}(B_{\rho/4}(\x) )}\leq C\rho^{-2s-\a}
 $$
  we finally get
$$
[\n (w/x^{2s-1})-\n (P/x^{2s-1})]_{C^{\a}(B_{\rho/4}(\x))}\leq C.
$$
Since $[ \n (P/x^{2s-1})]_{C^{\a}(B_{\rho/4}(\x))}=0$,  this yields 
$$
[\n (w/x^{2s-1})]_{C^{\a}(B_{\rho/4}(\x))}\leq C.
$$
and the proof is complete.
\end{proof}

We now prove the following.

\begin{theorem}\label{th-2}
Given $s\in(\frac12,1)$, let $\alpha_s\in(0,1-s)$ be given by Lemma~\ref{analysis-beta}.
Let $\Omega\subset \R^N$ be a $C^{2,\b}$ domain, with $0\in \partial\Omega$, and $f\in C^\a(\overline\Omega\cap B_1)$ with $\a<\min\{\alpha_s,2s-1,\b\}$ and $\a+2s\neq1$.
Let $u\in H^s_0(\O)$ be such that 
$$
 D_\O(u,\vp)=\int_{\O}f\vp\, dx \qquad \textrm{$\forall\vp \in C^\infty_c(B_1\cap\O)$,}
$$
and let $\d=\textrm{dist}(x,\de\O)$.  
Then,
\be\label{eq:Schauder-ud} 
\|u/\d^{2s-1}\|_{C^{1+\a}(B_{\rho/2}\cap\ov \O)}\leq C\left(\|u\|_{L ^2(\O)}+ \|f\|_{C^\a(\ov \O)}  \right),
\ee
with $C,\rho>0$ depending only on $N$, $s$, $\a$, and $\Omega$.

Moreover, letting $\psi=u/\d^{2s-1}$, we have 
\be\label{eq:unn-mc}
\de_\nu \psi (\s)=-(N-1) H_{\de\O}(\s) \psi(\s) \qquad\textrm{ for all $\s\in \de\O\cap B_1$,}
\ee
where    $H_{\de\O}$ is the mean curvature of $\de\O$.
\end{theorem}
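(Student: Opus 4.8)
The plan is to flatten $\partial\O$ near a boundary point, reduce to the half--space problem treated in Proposition~\ref{prop-blow-u-Dir} and its corollaries, and then transfer both the $C^{1+\a}$ estimate and the boundary identity back to $\O$. After normalizing $\|u\|_{L^2(\O)}+\|f\|_{C^\a(\ov\O\cap B_1)}\le 1$ and, shrinking $\b$, assuming $\a<\b<\a_s$ (a $C^{2,\b}$ domain is $C^{2,\b'}$ for $\b'<\b$), I would fix a point of $\partial\O$ and, after a rotation and translation placing it at $0$ with $\nu(0)=-e_N$, use the diffeomorphism $\Phi(x',x_N)=(x',x_N+\phi(x'))$ from the start of the section ($\phi\in C^{2,\b}$, $\phi(0)=|\n\phi(0)|=0$, $\|\phi\|_{C^{2,\b}}\le\frac14$, $\Phi(\R^N_+)=\O$ and $\Phi(\{x_N=0\})=\partial\O$ near $0$). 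Since $\Phi$ is volume preserving, the change of variables $x\mapsto\Phi(x)$ turns the weak formulation into $\calB_\Phi(w,\vp)=\int_{\R^N_+}f_\Phi\vp\,dx$ with $w=u\circ\Phi\in H^s_0(\R^N_+)$ and $f_\Phi=f\circ\Phi$; after multiplying $w$ by a cutoff (which adds a smooth lower--order term to the right--hand side) and by a small fixed constant, and using the a priori bound $\|w/x_N^{2s-1}\|_{C^{1-N/p}(B_1^+)}\le C$ from Lemma~\ref{lemm:appr-esrtim-Dir}, the hypotheses of Proposition~\ref{prop-blow-u-Dir} are met.

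Proposition~\ref{prop-blow-u-Dir}, Corollary~\ref{cor:near-bdr} and Corollary~\ref{cor:u-odr-ds} then give, for the flattened solution, the first--order expansion
\[\Big\|\,\frac{w}{x_N^{2s-1}}-\big((a\cdot x'+b)-bT_3(0)x_N\big)\Big\|_{L^\infty(B_r^+)}\le Cr^{1+\a},\qquad |a|+|b|\le C,\]
at the boundary point $0$, together with the interior Hölder seminorm bound $[w/x_N^{2s-1}]_{C^{1+\a}(B_{\x_N/2}(\x))}\le C$ for $\x$ on the axis. Repeating this with $\Phi$ re--centred at the boundary points $\Phi(z',0)$, $z'\in B_{1/2}'$ --- exactly as Corollary~\ref{cor-blow-up} upgrades Proposition~\ref{prop-blow-up} in the Neumann case --- I get the same bounds with a uniform constant on every ball $B_{d(\x)/2}(\x)$, $d(\x)=\textrm{dist}(\x,\partial\O)$. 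Combining these with the interior Schauder estimate (Lemma~\ref{lem:int-Scuader}) on balls far from $\partial\O$ and patching, exactly as in the last step of the proof of Theorem~\ref{th-1}, yields $\|w/x_N^{2s-1}\|_{C^{1+\a}(\ov{B_{1/2}^+})}\le C$. Since $\O\in C^{2,\b}$, the distance $\d$ is $C^{2,\b}$ near $\partial\O$ and $\d\circ\Phi=x_N\,g$ with $g\in C^{1,\b}$ positive near $0$ (as $\d\circ\Phi$ vanishes simply on $\{x_N=0\}$ with $\partial_{x_N}(\d\circ\Phi)>0$ there), so
\[\Big(\frac{u}{\d^{2s-1}}\Big)\circ\Phi=\frac{w}{x_N^{2s-1}}\;g^{-(2s-1)}\in C^{1+\a}(\ov{B_{1/2}^+}),\]
because $g^{-(2s-1)}\in C^{1,\b}\subset C^{1,\a}$; composing with $\Phi^{-1}\in C^{2,\b}$ and covering $\partial\O\cap B_{\rho/2}$ by finitely many such charts (plus interior Schauder) gives \eqref{eq:Schauder-ud}.

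For the boundary identity, I would fix $\s\in\partial\O\cap B_1$ and flatten near $\s$ as above, so that $D\Phi(0)=\iden$ (because $\n\phi(0)=0$) and $\nu(\s)=-e_N$. Corollary~\ref{cor:near-bdr} gives $\partial_{x_N}(w/x_N^{2s-1})(0)=-b\,T_3(0)$ with $b=(w/x_N^{2s-1})(0)$ and, by \eqref{eq:C30-MC}, $T_3(0)=-\D\phi(0)$. The crucial point is that $\Phi(0,t)=(0,t)$ lies on the inward normal ray at $\s$, so $\d(\Phi(0,t))=t$ for $t$ small; hence in $\d\circ\Phi=x_N\,g$ one has $g(0)=1$ and $\partial_{x_N}g(0)=\tfrac12\partial_{x_N}^2(\d\circ\Phi)(0)=0$, so the change of weight contributes nothing to the normal derivative:
\[\partial_{x_N}\Big(\big(u/\d^{2s-1}\big)\circ\Phi\Big)(0)=\partial_{x_N}\Big(\frac{w}{x_N^{2s-1}}\,g^{-(2s-1)}\Big)(0)=-b\,T_3(0).\]
Therefore, using $D\Phi(0)=\iden$ and $\nu(\s)=-e_N$, $\de_\nu(u/\d^{2s-1})(\s)=-\partial_{x_N}((u/\d^{2s-1})\circ\Phi)(0)=b\,T_3(0)=-b\,\D\phi(0)$; since the sign convention $\D v=\de_\nu^2 v+(N-1)H_{\partial\O}\de_\nu v$ on $\partial\O$ forces $(N-1)H_{\partial\O}(\s)=\D\phi(0)$, and $(u/\d^{2s-1})(\s)=(w/x_N^{2s-1})(0)\,g(0)^{-(2s-1)}=b$, we conclude \eqref{eq:unn-mc}.

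I expect the delicate point to be precisely this last step: correctly bookkeeping the change of weight $x_N\leftrightarrow\d$ --- its only possible second--order contribution vanishes because $\d$ is exactly linear along the inward normal --- and fixing the sign so that $T_3(0)=-\D\phi(0)=-(N-1)H_{\partial\O}(\s)$. A minor additional care is needed to see that $g^{-(2s-1)}$ is genuinely $C^{1,\a}$ (which uses $\a<\b$ and $\d\in C^{2,\b}$) when transferring the estimate to $\O$; the rest is an assembly of Proposition~\ref{prop-blow-u-Dir} and its corollaries with a patching argument as in Theorem~\ref{th-1}.
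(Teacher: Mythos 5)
Your proposal is correct and follows essentially the same route as the paper: flatten at each boundary point with the tangent--plane graph so that $D\Phi(0)=\iden$, apply Proposition \ref{prop-blow-u-Dir} together with Corollaries \ref{cor:near-bdr} and \ref{cor:u-odr-ds}, transfer the weight from $x_N$ to $\d$ using that $(\d\circ\Phi)/x_N$ is $C^{1,\b}$ and positive (which is where $\a<\b$ enters), and read off \eqref{eq:unn-mc} from $\de_{x_N}\big(w/x_N^{2s-1}\big)(0)=-b\,T_3(0)$ with $T_3(0)=-\D\phi(0)=-(N-1)H_{\de\O}$. Your remark that $\d$ is exactly linear along the inward normal, so the change of weight contributes nothing to the normal derivative, is precisely the point the paper invokes (stated there as $d(\Phi(0,x_N))=x_N+O(x_N^2)$); the only detail you gloss is that the chart must be re-rotated, not merely translated, at each nearby boundary point so as to keep $|\n\phi_q(0)|=0$ (as required by Proposition \ref{prop-blow-u-Dir}), which the paper arranges with uniform bounds via the local inversion theorem before the covering argument.
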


\begin{proof}
We assume for simplicity that $\|u\|_{L^2(\O)}+\|f\|_{C^{\a}(\ov \O)}\leq 1.$  Hence by Lemma \ref{lemm:appr-esrtim-Dir}, we get $\|u/\d^{2s-1}\|_{L^\infty(\O\cap B_{1/2})}+\|u\|_{C^{2s-1}(\ov \O\cap B_{1/2})}\leq C$.

 Up to  a scaling and  a rotation,  we may assume that $B_{1/4}\cap \de\O$ is contained in  the graph $\{x_N=\g(x')\}$ with   $\g\in {C^{2,\b}(B_{1/4}')}$.
  For $q\in \de\O $, let $\nu(q)$ denote the unit exterior normal vector of $\O$ at $q$ and denote $T_q\de\O=\nu(q)^\perp$ the tangent plane of $\de\O$ at $q$. \\
  By the local inversion theorem,  there exist $r_0\in (0,1/4]$ depending only on $N$, $\b$ and $\|\g\|_{{C^{2,\b}(B_1')}  }$ such that for any $q\in B_{r_0}\cap \de\O$,  there exist an orthonormal basis $(E_1(q)\dots,E_{N-1}(q))$  of $T_q\de\O$ and   $ \ti \phi\in C^{2,\b}(  B_{r_0}' )$ such that    
$$
x'\mapsto q+\sum_{i=1}^{N-1} x_i E_i(q)- \ti \phi(x')\nu(q):  B_{r_0}' \to \de\O
$$
with $\|\ti\phi\|_{C^{2,\b}(  B_{r_0}' )}\leq C(N,\b, \g,r_0)$.  It is clear that $ \ti \phi(0)=|\n\ti \phi(0)|=0$.
 We let $\eta\in C^\infty_c(B_{r_0}')$ with $\eta\equiv 1$ on $B_{r_0/2}'$ and put $\phi=\eta \ti\phi$.
For all $q\in B_{r_0}\cap \de\O$, we define
\be \label{eq:Phi-q}
\Phi:\R^N\to \R^N, \qquad \Phi(x',x_N)=q+ \sum_{i=1}^{N-1} x_i E_i(q)-\left(x_N+ \phi(x') \right)\nu(q).
\ee
Next, decreasing $r_0$ if necessary,    $\Phi$  is a global $C^{2,\b}$ diffeomorphism  and clearly  $\textrm{Det}D\Phi(x)=1$ for all $x\in \R^N$.   We may assume that the (signed) distance function $d\in C^{2,\b}(\{d<r_0/2\})$.  
We also recall that the mean curvature of $\de\O$ at $q$ is given by
 \be\label{eq:def-MC}
 H_{\de\O}(q):=\frac{1}{N-1}\D\phi(0).
\ee
We define $Q_\d:=B_\d'\times (0,\d)$ and  $\calQ_\d:=\Phi(Q_\d )\subset \O$.   
  For any   $x_0\in \calQ_{r_0/9}$, let  $q\in \de\O\cap B_{r_0}$   such that $|x_0-q|=d(x_0)$.
Letting $v={ \chi_{r_0/4}}u$,  we have 
\begin{align*}
\Ds_{\O} v=f+c_{N,s}\int_{\O}\frac{(1-\chi_{ {r_0/4}}(y))u(y)}{|x-y|^{N+2s}}\, dy=:f_1.
\end{align*}
Clearly $\|f_1\|_{C^\a(\calQ_{r_0/8})}\leq C$ for all $\a\in (0, \min (2s-1,\b))$.
Now we get
\begin{align*}
\Ds_{\calQ_{ r_0/2}} v= f_1- \Ds_{\O\setminus \calQ_{ r_0/2}} v=: f_2, 
\end{align*}
and obviously $\|f_2\|_{C^\a(\calQ_{r_0/8})}\leq C$ because $ \|v\|_{C^{2s-1}(\calQ_{r_0/8})}\leq C$.  We define $ w(x)=v(\Phi(x))$ and $g_2(x)=f_2(\Phi(x))$.  Then by a change of variable, we get 
\begin{align*}
c_{N,s} \int_{Q_{r_0/2}}\frac{ w(x)- w(y)}{|\Phi(x)-\Phi(y)|^{N+2s} }\, dy=g_2(x) .
\end{align*}
Hence $w\in H^s(\R^N_+)$ satisfies $\|w/x^{2s-1}_N\|_{L^{\infty}(\R^N_+)}+\|w\|_{C^{2s-1}(\R^N_+)}\leq C$ and  solves
\begin{align}\label{eq:Lw-g3}
c_{N,s}\int_{\R^N_+}\frac{ w(x)- w(y)}{|\Phi(x)-\Phi(y)|^{N+2s} }\, dy=g_3(x),
\end{align}
with $g_3(x)=g_2(x)-c_{N,s}\int_{\R^N_+\setminus Q_{r_0/2}}\frac{ w(x)- w(y)}{|\Phi(x)-\Phi(y)|^{N+2s} }\, dy.$ We have that $\|g_3\|_{C^\a(Q_{r_0/8} )}\leq C.$ 

Recall that $x_0=q-|x_0- q|\nu(q)=\Phi(0,  |x_0- q|).$
We can thus apply Corollary \ref{cor:u-odr-ds} to the equation \eqref{eq:Lw-g3},  to get
\be \label{eq:wx2s-1}
  [ w/x_N^{2s-1}]_{C^{1+\a}(B_{r/2}(\x))}=  [ u\circ\Phi/x_N^{2s-1}]_{C^{1+\a}({B_{r/2}(\x)})}\leq  C,  \quad\textrm{ with $\x=\Phi^{-1 }(x_0)  \in Q_{\frac{r_0}{9}} \cap \R e_N$.}
\ee
where $r=\x_N/2= |x_0- q|/2$.
In addition, Corollary \ref{eq:dec-zero-ok},  \eqref{eq:C30-MC}  and  \eqref{eq:def-MC} yield
\be \label{eq:dex_N-mc}
\de_{x_N} [w/x_N^{2s-1}] (0)=-T_3(0)   [ w/x_N^{2s-1}](0)= (N-1) H_{\de\O}(q)  [ w/x_N^{2s-1}](0).
\ee
In view of \eqref{eq:Phi-q} and \eqref{eq:wx2s-1},  by changing variables,  we  thus obtained  
$$
  [ u/ \d^{2s-1}]_{C^{1+\a}({B_{r/8}(x_0)})}\leq  C ,  
$$
where   we used that $x\mapsto \frac{d(\Phi(x))}{x_N}\in C^{1+\a}(B_{r_0/2})$ and that $d=\d$ in $\O$.
This implies that  $[ u/\d^{2s-1}]_{C^{1+\a}(\calQ_{r_0/9})}\leq  C  $ and \eqref{eq:Schauder-ud}  follows.

  Finally  using \eqref{eq:dex_N-mc} and the fact that $d(\Phi(0,x_N))=d(0, x_N)=x_N+O(x_N^{2})$, we conclude that
$$
\de_{\nu}[u/\d^{2s-1}] (q)= - (N-1) H_{\de\O}(q) [u/\d^{2s-1}] (q)
$$
and the proof is complete.
\end{proof}

We finally give the:

\begin{proof}[Proof of Theorem \ref{thm-Dir}]
The result follows from Theorem \ref{th-2} and a covering argument.
\end{proof}

\end{document}